\numberwithin{equation}{section}
\newtheorem{theorem}{Theorem}[section]
\newtheorem{lemma}[theorem]{Lemma}
\def\D{\partial}
\def\dt{\partial_t}
\def\ep{\epsilon}
\def\oD{\mathrm{D}}
\def\R{\Re e}
\def\I{\Im m}
\def\I{\Im m}
\newcommand{\RR}{\mathbb{R}}
\newcommand{\DD}{{\mathbb D}}
\def\cV{\mathcal{V}}
\def\cH{\mathcal{H}}
\def\vj{{\bf j}}
\def\vA{{\bf A}}
\def\vE{{\bf E}}
\def\vB{{\bf B}}
\begin{document}

\title{Stability analysis of collisionless plasmas with specularly reflecting boundary}

\author{Toan Nguyen\footnotemark[1] \and Walter A. Strauss\footnotemark[2]}

\date{\today}

\maketitle

\begin{abstract} 
In this paper we provide sharp criteria for linear stability or instability of equilibria of 
collisionless plasmas in the presence of boundaries. 
Specifically, we consider the relativistic Vlasov-Maxwell system with specular reflection at the 
boundary for the particles and with the perfectly conducting boundary condition for the electromagnetic field.  
Here we initiate our investigation in the simple geometry of radial and longitudinal symmetry.  
\end{abstract}

\renewcommand{\thefootnote}{\fnsymbol{footnote}}

\footnotetext[1]{Division of Applied Mathematics, Brown University, 182 George Street, Providence, RI 02912, USA. 
Email: Toan\underline{~}Nguyen@Brown.edu. Research of T.N. was partially supported under NSF grant no. DMS-1108821.
}

\footnotetext[2]{Department of Mathematics and Lefschetz Center for Dynamical Systems,
Brown University, Providence, RI 02912, USA. Email: wstrauss@math.brown.edu.}




\section{Introduction}

We consider a plasma at high temperature or of low density such that collisions can be ignored as compared 
with the electromagnetic forces.  Such a plasma is modeled by the relativistic Vlasov-Maxwell system (RVM) 
\begin{equation}\label{Vlasov-equations}\left\{\begin{aligned} 
&\dt f^+ + \hat v \cdot \nabla_x f^+ + (\vE + \hat v \times \vB )\cdot \nabla_v f^+   =0,
\\&\dt f^- + \hat v \cdot \nabla_x f^- - (\vE + \hat v \times \vB ) \cdot \nabla_v f^-  =0, 
\end{aligned}\right.
\end{equation}
\begin{equation}\label{Gauss-laws} 
\nabla_x \cdot \vE = \rho,\qquad \nabla_x \cdot \vB =0,
\end{equation}
\begin{equation}\label{Ampere-law}
\dt  \vE  - \nabla_x \times \vB  = - \vj, \quad 
\dt \vB + \nabla_x \times \vE =0,
\end{equation}
$$\rho = \int_{\RR^3} (f^+ - f^-) \;dv , \qquad \vj = \int_{\RR^3} \hat v (f^+-f^-)\; dv.$$ 
Here 
$f^\pm(t,x,v)\ge0$ is the density distribution for ions and electrons, respectively,  $x \in \Omega \subset \RR^3$ 
is the particle position, $\Omega$ is the region occupied by the plasma,  $v$ is the particle momentum, 
$\langle v \rangle = \sqrt{1+|v|^2}$ is the particle energy, 
$\hat v = v/\langle v \rangle$ the particle velocity, $\rho$  the charge density, $\vj$  the current density, 
$\vE$  the electric field, $\vB$  the magnetic field and $\pm(\vE + \hat v \times \vB)$ the electromagnetic force.   
We assume that the particle molecules interact with each other only through their own electromagnetic forces.
For simplicity, we have taken all physical constants such as the speed of light and the mass of the electrons and ions 
equal to 1.  This whole paper can be easily modified to apply with the true physical constants.  

Stability analysis for a Vlasov-Maxwell system of the type that we present in this paper has so far appeared 
only in the absence of spatial boundaries, that is, 
either in all space or in a periodic setting like the torus.  In this paper we present the first systematic stability analysis 
in a domain $\Omega$ with a boundary.  It is an unresolved problem to determine which boundary conditions 
an actual plasma may satisfy under various physical conditions.  
Several  boundary conditions are mathematically valid and some of them are more physically justified than others.  
Stability analysis is a central issue in the theory of plasmas.  
In a tokamak and other nuclear fusion reactors, for instance, the plasma is confined by a strong magnetic field.  
This paper is a first, rather primitive, step in the direction of mathematically understanding a confined plasma.  
We take the case of a fixed boundary with specular and perfect conductor boundary conditions 
in a  longitudinal and radial setting.    

The specular condition is 
\begin{equation}\label{bdry-specular} f^\pm (t,x,v) = f^\pm(t,x,v - 2(v\cdot n(x))n(x)),\qquad n(x)\cdot v <0,\qquad x \in \D \Omega, \end{equation}
where $n(x)$ denotes the outward normal vector of $\D \Omega$ at $x$. 
The perfect conductor boundary condition is 
\begin{equation}\label{bdry-EBcond} 
\vE\times n(x) = 0, \qquad \vB \cdot n(x)  =0 ,\qquad x \in \D \Omega.\end{equation}
Under these conditions it is straightforward to see that the total energy  
\begin{equation}\label{energy-non} 
\mathcal{E}(t)  = \frac{1}{2}\int_\DD \int_{\RR^3}v^2 (f^+ + f^-)\; dvdx + \frac 12 \int_\DD \Big( |\vE|^2 + |\vB|^2\Big)\; dx \end{equation}
is conserved in time, and also that the system admits infinitely many equilibria.  {\em The main focus of the present paper is to investigate stability properties of the equilibria.} 

Our analysis closely follows the spectral analysis approach in \cite{LS1,LS2,LS3} which tackled the stability problem 
in domains without spatial boundaries.  Roughly speaking, that approach provided the sharp stability  
criterion $\mathcal{L}^0\ge 0$, where $\mathcal{L}^0$ is a certain nonlocal self-adjoint operator 
acting on scalar functions that depend only on the spatial variables.  
The positivity condition was verified explicitly for various interesting examples. 
It may also be amenable to numerical verification.   
In our case with a boundary, every integration by parts brings in boundary terms.  
This leads to some significant complications.

In the present paper, we restrict ourself to the stability problem in the simple setting of 
longitudinal and radial symmetry.    Thus the problem becomes spatially two-dimensional.  
Indeed, using standard cylindrical coordinates $(r,\theta, z)$, the symmetry means that 
 there is no dependence on $z$ and $\theta$ and that 
the domain is a cylinder $\Omega = \DD\times \RR$ where $\DD$ is a disk.  
We may as well assume that $\DD$ is the unit disk in the $(x_1,x_2)$ plane.  
It follows that 
$x = (x_1,x_2,0)\in \DD\times \RR$, $v = (v_1,v_2,0)\in \RR^3$,  $\vE = (E_1,E_2,0)$, and $\vB = (0,0,B)$.  
In the sequel we will drop the zero coordinates so that $x\in\DD$ and $v\in\RR^2$.  
In terms of the polar coordinates $(r,\theta)$, we denote  
$ e_r = (\cos \theta,\sin \theta), \ e_\theta = (-\sin \theta,\cos \theta).$ 
It follows that the field has the form 
\begin{equation}\label{def-EB}
\vE = -\partial_r \varphi e_r  -\dt \psi e_\theta, \qquad B =  \frac 1r \partial_r (r\psi)),
\end{equation}
where the scalar potentials $\varphi(t,r)$ and $\psi(t,r)$ satisfy a reduced form of the Maxwell equations.  
See the next section for details.  

\subsection{Equilibria}\label{sec-equilibrium}
We will denote an equilibrium by $(f^{0,\pm}, \vE^0, B^0)$.  Its  field has the form 
\begin{equation}\label{EB-equil}
\vE^0 = -\partial_r \varphi^0 e_r, \qquad B^0 = \frac 1r \partial_r (r \psi^0) .
\end{equation}
Then the particle energy and angular momentum  
\begin{equation} \label{ep}
e^\pm (x,v):= \langle v \rangle  \pm \varphi^0(r), \qquad p^\pm(x,v) := r(v_\theta \pm \psi^0(r)),  
\end{equation} 
are invariant along the particle trajectories.  
It is straightforward to check that $\mu^\pm(e^\pm,p^\pm)$ solve the Vlasov equations for any smooth functions 
$\mu^\pm(e,p)$. So we consider equilibria of the form  
\begin{equation}\label{f-equil} 
f^{0,+}(x,v) = \mu^+ (e^+(x,v),p^+(x,v)),\quad f^{0,-}(x,v) = \mu^- (e^-(x,v),p^-(x,v)).  
\end{equation}
The potentials still have to satisfy the Maxwell equations, which take the form 
 \begin{equation}\label{eqs-equilibria} \begin{aligned}
-\Delta \varphi^0& =\int  \Big[\mu^+(e^+,p^+) - \mu^-(e^-,p^-) \Big]\;dv
 \\- \Delta_r\psi^0 &=  \int  \hat v_\theta\Big[\mu^+(e^+,p^+) - \mu^-(e^-,p^-) \Big]\;dv
\end{aligned}
\end{equation}
with $\Delta_r = \Delta - \frac1{r^2}$.   Again, see the next section for details. 
It is clear that {\it the boundary conditions \eqref{bdry-specular} and \eqref{bdry-EBcond} 
are automatically satisfied for the equilibria} since $e^\pm$ and $p^\pm$ are even in $v_r$, and $\vE^0$ is parallel to $e_r$.  
In the appendix we will prove that plenty of such equilibria do exist.   

Let $(f^{0,\pm},\vE^0,\vB^0)$ be an equilibrium as just described with $f^{0,\pm} = \mu^\pm(e^\pm,p^\pm)$. 
We assume that $\mu^\pm(e,p)$ are nonnegative, $C^1$ smooth, and satisfy 
\begin{equation}\label{mu-cond} 
\mu^\pm_e (e,p)<0 ,\qquad |\mu_p^\pm(e,p)| + |\mu_e^\pm(e,p)| + \frac{|\mu^\pm_p(e,p)|^2}{|\mu_e^\pm(e,p)|}\quad\le\quad \frac{ C_\mu}{1+|e|^\gamma}
\end{equation}
for some constant $C_\mu$ and some $\gamma>2$, where the subscripts $e$ and $p$ denote the partial derivatives.   
In addition, we also assume that $\varphi^0,\psi^0$ are continuous in $\overline \DD$.   
It follows that $\vE^0,\vB^0\in C^1(\overline\DD)$, as proven in the appendix.  



We consider the Vlasov-Maxwell system linearized around the equilibrium. The linearization is 
\begin{equation}  \label{linearized-sys} 
 \begin{aligned}
 \dt f^\pm+\oD ^\pm f^\pm  &=  \mp (\vE + \hat v\times \vB)\cdot \nabla_v f^{0,\pm} ,
\end{aligned} 
\end{equation}
together with the Maxwell equations and the specular and perfect conductor boundary conditions.  
Here $\oD ^\pm$ denotes the first-order linear differential operator: 
$\oD^\pm: = \hat v \cdot \nabla_x  \pm (\vE^0 + \hat v\times \vB^0)\cdot \nabla_v. $ 
See the next section for details.  

\subsection{Spaces and operators} \label{sec-spaces}
In order to state precise results, we have to define certain spaces and operators.    
We denote by $\cH^\pm = L^2_{|\mu_e^\pm|}(\DD \times \RR^2)$ the weighted $L^2$ space consisting of functions 
$f^\pm(x,v)$ which are radially symmetric in $x$ such that 
$$ \int_\DD  \int |\mu_e^\pm| |f^\pm|^2 \; dv dx <+\infty. $$ 
The main purpose of the weight function is to control the growth of $f^\pm$ as $|v| \to\infty$.  
Note that 
the weight $|\mu_e^\pm|$ never vanishes and it decays like a power of $v$ as $|v| \to\infty$.  
When there is no danger of confusion, we will often write $\cH=\cH^\pm$.  

For $k\ge 0$  
we denote by $H^k_r(\DD )$ the usual $H^k$ space on $\DD$ 
that consists of functions that are radially symmetric. 
If $k=0$ we write   $L^2_r(\DD )$.  
By  $H^{k\dagger}(\DD  )$ we denote the space  of functions $\psi = \psi(r)$ in $H^k_r(\DD )$ such that 
 $\psi(r)e^{i\theta}$ belongs to the usual $H^k(\DD  )$ space. The motivation for this  space is to 
 get rid of the apparent singularity $1/{r^2}$ at the origin in the operator $\Delta_r$, thanks to the identity 
$$ - \Delta_r \psi = \Big(-\Delta + \frac 1{r^2}\Big )\psi = - e^{-i \theta} \Delta (\psi e^{i\theta}).$$
By $\mathcal{V}$ we denote the space consisting of the functions in $H^2_r(\DD  )$ which satisfy 
the Neumann boundary condition and which have zero average over $\DD$.   
Also, let $\mathcal{V}^\dagger$ be the space consisting of the functions in $H^{2\dagger}(\DD )$ which satisfy 
the Dirichlet boundary condition. The spaces $\mathcal{V}$ and $\mathcal{V}^\dagger$ incorporate the 
boundary conditions  \eqref{BC-Maxwell} for electric and magnetic potentials, respectively.

Denote by $\mathcal{P}^\pm$ the orthogonal projection on the kernel of $\oD ^\pm$ 
in the weighted  space $\cH^\pm$. In the  spirit of \cite{LS1,LS3},  our main results involve the three linear operators
on $L^2(\DD)$, two of which are unbounded, 
\begin{equation}\label{operators-0}
\begin{aligned}  \mathcal{A}_1^0 h & = - \Delta h  - \int \mu^+_e(1-\mathcal{P}^+) h \; dv - \int \mu^-_e (1-\mathcal{P}^-) h \; dv, 
\\
 \mathcal{A}_2^0 h & =- \Delta_r h   -  \int r\hat v_\theta (\mu_p^+ + \mu_p^-) \; dv h  
 -   \int \hat v_\theta\Big(\mu_e^+ \mathcal{P}^+(\hat v_\theta h) +\mu_e^- \mathcal{P}^-(\hat v_\theta h) \Big)\; dv, 
\\
\mathcal{B}^0 h  &= r \int \Big(\mu_p^+ +  \mu_p^-\Big )\; dvh + 
\int \Big(\mu_e^+\mathcal{P}^+(\hat v_\theta h) +\mu_e^-\mathcal{P}^-(\hat v_\theta h) \Big) \; dv . 
\end{aligned}\end{equation}
Here $\mu^\pm$ denotes $\mu^\pm(e^\pm,p^\pm) = \mu^\pm(\langle v \rangle  \pm \varphi^0(x),rv_\theta \pm r\psi^0(x))$. 
These three operators are naturally derived from the Maxwell equations when $f^+$ and $f^-$ are written in integral 
form by integrating the Vlasov equations along the trajectories.  See Section \ref{ss-keyops} for their properties.  
In the next section 
we will show that both $\mathcal{A}_1^0$ with domain  $\mathcal{V}$ and $\mathcal{A}_2^0$ 
with domain  $\mathcal{V}^\dagger$ are self-adjoint operators on $L^2_r(\DD)$.   
Furthermore, the inverse of $\mathcal{A}_1^0$ is well-defined on the range of $\mathcal{B}^0$, and so we are able 
to introduce our key operator 
\begin{equation}\label{operator-L0}
\mathcal{L}^0 = \mathcal{A}_2^0 + (\mathcal{B}^0)^* (\mathcal{A}_1^0)^{-1} \mathcal{B}^0. \end{equation}
The operator $\mathcal{L}^0$ will then be self-adjoint on $L^2_r(\DD)$ with its domain $\mathcal{V}^\dagger$.   
As the next theorem states, $\mathcal{L}^0 \ge 0$ [which means 
that $(\mathcal{L}^0\psi,\psi)_{L^2} \ge 0$ for all $\psi \in \mathcal{V}^\dagger$]  is the condition for stability. 

Finally, by a {\it growing mode} we mean a solution of the linearized system (including the 
boundary conditions) of the form 
$(e^{\lambda t} f^\pm, e^{\lambda t}{\vE}, e^{\lambda t} {\vB})$ with $\R\lambda>0$ 
such that $f^\pm \in \cH^\pm$ and $\vE, \vB\in L^2(\DD)$.  
The derivatives and the boundary conditions are considered in the weak sense, which will be 
 justified in Lemma \ref{growprops}.  
In particular, the weak meaning of the specular condition on $f^\pm$ will be given by \eqref{domainD}.

\bigskip  
\subsection{Main results}
The first main result of our paper gives a necessary and sufficient condition for stability in the spectral sense.   
\begin{theorem}\label{theo-main} 
Let $(f^{0,\pm},\vE^0,\vB^0)$ be an equilibrium of the Vlasov-Maxwell system satisfying \eqref{mu-cond}. 
Consider the linearization of the Vlasov-Maxwell system \eqref{linearized-sys} for {\em radially symmetric} perturbations 
together with the specular and perfect conductor boundary conditions. Then

(i) if  $\mathcal{L}^0\ge 0$, there exists no growing mode of the linearized system;   

(ii) any growing mode, if it does exist, must be purely growing; that is, the unstable exponent $\lambda$ must be real; 

(iii) if $\mathcal{L}^0\not\ge 0$, there exists a growing mode.

\end{theorem}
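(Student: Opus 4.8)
The plan is to prove the three parts by reducing the linearized Vlasov--Maxwell system to a self-adjoint operator equation for the magnetic potential $\psi$, and then using the sign of $\mathcal{L}^0$ to decide solvability of a family of such equations depending on a real parameter. First I would take a putative growing mode $(e^{\lambda t}f^\pm, e^{\lambda t}\vE, e^{\lambda t}\vB)$ with $\R\lambda>0$ and integrate the linearized Vlasov equations \eqref{linearized-sys} along the equilibrium characteristics of $\oD^\pm$. Since $e^\pm$ and $p^\pm$ are invariants of $\oD^\pm$, this expresses $f^\pm$ in terms of the scalar potentials $\varphi,\psi$ as $f^\pm = \mp\mu_e^\pm(\varphi \mp \hat v_\theta\,?) \mp \mu_p^\pm(\cdots) + (\lambda + \oD^\pm)^{-1}(\text{source})$; the crucial point, exactly as in \cite{LS1,LS3}, is that when $\R\lambda>0$ the operator $(\lambda + \oD^\pm)^{-1}$ is bounded on $\cH^\pm$ and the projection $\mathcal{P}^\pm$ arises naturally in the limit. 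Here one must be careful that the \emph{specular boundary condition} is respected by the characteristic flow: a trajectory that hits $\D\DD$ is continued by the specular reflection $v\mapsto v - 2(v\cdot n)n$, and because $e^\pm$, $p^\pm$ and $\mu_e^\pm$ are even in $v_r$ this reflected flow is measure-preserving on $\cH^\pm$ and $(\lambda+\oD^\pm)^{-1}$ remains a contraction-type bound; this is the step where the boundary genuinely enters and where I expect to lean on Lemma \ref{growprops} for the weak formulation \eqref{domainD}.

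Next I would substitute these representations of $f^\pm$ into the reduced Maxwell equations for $\varphi$ and $\psi$ (the boundary-value problems encoded by $\mathcal{V}$ and $\mathcal{V}^\dagger$). This yields a coupled system of the schematic form $\mathcal{A}_1^\lambda \varphi = \mathcal{B}^\lambda \psi$ and $\mathcal{A}_2^\lambda \psi = (\mathcal{B}^\lambda)^*\varphi + (\text{lower order in }\lambda)$, where $\mathcal{A}_1^\lambda, \mathcal{A}_2^\lambda, \mathcal{B}^\lambda$ are $\lambda$-dependent deformations of the operators in \eqref{operators-0} that reduce to them as $\lambda\to 0^+$. Using that $\mathcal{A}_1^\lambda$ is invertible for $\R\lambda>0$ (it is a positive perturbation of $-\Delta$ with Neumann condition on the mean-zero space $\mathcal{V}$; the extra terms $-\int\mu_e^\pm(1-\mathcal{P}^\pm)h\,dv$ are nonnegative since $\mu_e^\pm<0$), I can eliminate $\varphi = (\mathcal{A}_1^\lambda)^{-1}\mathcal{B}^\lambda\psi$ and arrive at a single self-adjoint-looking equation $\mathcal{L}^\lambda \psi = 0$ with $\mathcal{L}^\lambda := \mathcal{A}_2^\lambda + (\mathcal{B}^\lambda)^*(\mathcal{A}_1^\lambda)^{-1}\mathcal{B}^\lambda$. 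For real $\lambda>0$, $\mathcal{L}^\lambda$ is self-adjoint on $L^2_r(\DD)$ with domain $\mathcal{V}^\dagger$, and $\mathcal{L}^0$ is its limit. The key structural facts I would establish are: (a) $\mathcal{L}^\lambda$ is monotone in $\lambda$ on $(0,\infty)$ in the quadratic-form sense — $(\mathcal{L}^\lambda\psi,\psi)$ is increasing in $\lambda$ for each fixed $\psi$ — because the $\lambda$-dependence of each piece comes through $(\lambda+\oD^\pm)^{-1}$ in a monotone way (this mirrors the monotonicity lemmas of \cite{LS3}); and (b) $\mathcal{L}^\lambda \to +\infty$ in the form sense as $\lambda\to\infty$ (the $-\Delta_r$ term dominates while the $v$-integrals vanish by dominated convergence using \eqref{mu-cond}).

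With (a) and (b) in hand the three conclusions follow. For (i): if $\mathcal{L}^0\ge 0$ then by monotonicity $\mathcal{L}^\lambda > 0$ strictly for every real $\lambda>0$, so $\mathcal{L}^\lambda\psi=0$ forces $\psi=0$, hence $\varphi=0$ and then $f^\pm=0$; thus no real growing mode exists, and by part (ii) no growing mode exists at all. For (ii): given a growing mode with $\lambda = \sigma + i\tau$, $\sigma>0$, one takes the real part of the energy-type identity obtained by pairing the eliminated equation with $\bar\psi$ and uses that the imaginary part of $(\lambda+\oD^\pm)^{-1}$, integrated against $|\mu_e^\pm|$, has a definite sign that is incompatible with $\tau\ne0$ unless the mode vanishes — this is the standard symmetrization argument from \cite{LS1} adapted with the specular-flow self-adjointness of $\oD^\pm$. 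For (iii): if $\mathcal{L}^0\not\ge 0$, pick $\psi_*\in\mathcal{V}^\dagger$ with $(\mathcal{L}^0\psi_*,\psi_*)<0$; by (b) and continuity of $\lambda\mapsto(\mathcal{L}^\lambda\psi_*,\psi_*)$ the lowest eigenvalue of $\mathcal{L}^\lambda$ passes from negative (at $\lambda=0^+$) to positive (as $\lambda\to\infty$), so there is $\lambda_*>0$ with $0$ in the spectrum of $\mathcal{L}^{\lambda_*}$; the corresponding null vector $\psi$ generates, by reversing the elimination, an honest growing mode — the only technical care being to check that the constructed $f^\pm$ lies in $\cH^\pm$ and satisfies the weak boundary conditions, which again uses the boundedness of $(\lambda_*+\oD^\pm)^{-1}$.

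\textbf{Main obstacle.} I expect the hardest part to be the rigorous treatment of the specular characteristic flow: showing that $\oD^\pm$ (with the specular gluing at $\D\DD$ built into its domain via \eqref{domainD}) is skew-adjoint on $\cH^\pm$, that $(\lambda+\oD^\pm)^{-1}$ is a well-defined bounded operator with the right bound for $\R\lambda>0$, and that $\mathcal{P}^\pm$ (projection onto $\ker\oD^\pm$) behaves well in all the limits $\lambda\to 0^+$ and $\lambda\to\infty$. In the boundary-free case these are clean facts about transport along straight or circular orbits; here every integration by parts in $x$ or $v$ produces a boundary term on $\D\DD$, and one must verify that the specular reflection makes all these boundary contributions cancel — this is precisely the ``significant complications'' alluded to in the introduction, and it is where the bulk of the technical work, already partly packaged in Lemma \ref{growprops}, will reside.
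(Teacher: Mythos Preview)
Your outline for parts (ii) and (iii) is essentially the paper's route: for (iii) you correctly introduce the family $\mathcal{L}^\lambda$, use $\mathcal{L}^\lambda\ge0$ for large $\lambda$, strong convergence $\mathcal{L}^\lambda\to\mathcal{L}^0$ as $\lambda\to0^+$, and continuity of the bottom eigenvalue to locate a kernel; for (ii) the paper's ``symmetrization'' is the even/odd splitting of $f^\pm$ in $v_r$, which you allude to but do not name.

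The genuine gap is in your proof of (i). You propose to deduce nonexistence of growing modes from \emph{monotonicity} of $\lambda\mapsto(\mathcal{L}^\lambda\psi,\psi)$. This claim is unsubstantiated and in fact should not be expected to hold. One checks (via the spectral representation of $\mathcal{Q}^\pm_\lambda=\lambda(\lambda+\oD^\pm)^{-1}$ with $\oD^\pm$ skew-adjoint) that $\langle\mathcal{A}_2^\lambda\psi,\psi\rangle$ is indeed increasing in $\lambda$, but the complementary piece $(\mathcal{B}^\lambda)^*(\mathcal{A}_1^\lambda)^{-1}\mathcal{B}^\lambda$ moves the \emph{other} way: since $1-\mathcal{Q}^\pm_\lambda\to0$ as $\lambda\to\infty$, one has $\mathcal{B}^\lambda\to0$ and hence $(\mathcal{B}^\lambda)^*(\mathcal{A}_1^\lambda)^{-1}\mathcal{B}^\lambda\to0$, whereas at $\lambda=0$ this term equals $(\mathcal{B}^0)^*(\mathcal{A}_1^0)^{-1}\mathcal{B}^0\ge0$. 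So $\mathcal{L}^\lambda$ is a sum of an increasing and a (roughly) decreasing term, and there is no reason $\mathcal{L}^0\ge0$ forces $\mathcal{L}^\lambda>0$ for all $\lambda>0$ through your mechanism. Without this, your argument for (i) collapses.

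The paper proves (i) by an entirely different, variational route that never touches $\mathcal{L}^\lambda$. It shows that the linearized energy $\mathcal{I}(f^\pm,\varphi,\psi)$ and the Casimir-type functionals $\mathcal{K}_g^\pm(f^\pm,\psi)=\iint(f^\pm\mp\mu_e^\pm\hat v_\theta\psi\mp r\mu_p^\pm\psi)\,g\,dvdx$ (for $g\in\ker\oD^\pm$) are time-invariant, hence vanish on any growing mode. The vanishing of all $\mathcal{K}_g^\pm$ places $(f^+,f^-)$ in a constraint set $\mathcal{F}_\psi$; minimizing the ``Vlasov $+$ electrostatic'' part of $\mathcal{I}$ over $\mathcal{F}_\psi$ yields an explicit lower bound, and combining it with the remaining magnetic terms gives the key inequality
\[
0=\mathcal{I}(f^\pm,\varphi,\psi)\ \ge\ (\mathcal{L}^0\psi,\psi)_{L^2}+\lambda^2\|\psi\|_{L^2}^2,
\]
which directly contradicts $\mathcal{L}^0\ge0$ (using part (ii) to take $\lambda$ real). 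This is the missing idea in your proposal: the connection between a growing mode and $\mathcal{L}^0$ goes through conserved quantities and a constrained minimization, not through monotonicity of the $\lambda$-family.
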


Our second main result provides explicit examples for which the stability condition does or does not hold. 
For more precise statements of this result, see Section \ref{sec-examples}.

\begin{theorem} \label{theo-examples} Let $(\mu^{\pm},\vE^0,\vB^0)$ be an equilibrium as above.   

(i) The condition $p \mu^\pm_p(e,p)\le 0$ for all $(e,p)$ implies $\mathcal{L}^0 \ge 0$, 
provided that $\varphi^0$ is bounded and $\psi^0$ is sufficiently small.     (So such an equilibrium is stable.) 

(ii) The condition $|\mu_p^\pm (e,p)|  \le  \frac \epsilon {1+|e|^\gamma} $ for some $\gamma>2$ and for $\epsilon$ sufficiently small implies $\mathcal{L}^0 \ge 0$, 
provided that $\varphi^0=0$.  Here $\psi^0$  is not necessarily small.  (So such an equilibrium is stable.) 
 
(iii) The conditions $\mu^+(e,p) = \mu^-(e,-p)$ and $p \mu^-_p(e,p)\ge c_0 p^2 \nu(e)$, for some 
nontrivial nonnegative function $\nu(e)$, imply that for a suitably scaled version of  $(\mu^\pm,0,\vB^0)$,  
 $\mathcal{L}^0\ge 0$ is violated.  (So such an equilibrium is unstable.)  
\end{theorem}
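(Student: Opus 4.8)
The strategy is to analyze the quadratic form $(\mathcal{L}^0\psi,\psi)_{L^2}$ directly, exploiting the structure of $\mathcal{L}^0 = \mathcal{A}_2^0 + (\mathcal{B}^0)^*(\mathcal{A}_1^0)^{-1}\mathcal{B}^0$. By Theorem \ref{theo-main}, sign conditions on this form translate into stability or instability. For part (i), the plan is to write $(\mathcal{L}^0\psi,\psi) = (\mathcal{A}_2^0\psi,\psi) + (\mathcal{B}^0\psi,(\mathcal{A}_1^0)^{-1}\mathcal{B}^0\psi)$ and bound each piece from below. The key observation is that the cross-term $(\mathcal{B}^0)^*(\mathcal{A}_1^0)^{-1}\mathcal{B}^0$ is nonnegative whenever $\mathcal{A}_1^0 \ge 0$; and $\mathcal{A}_1^0 \ge 0$ follows from $\mu_e^\pm < 0$ together with the fact that $(1-\mathcal{P}^\pm)$ is an orthogonal projection, so $-\int \mu_e^\pm (1-\mathcal{P}^\pm)h\, h\, dv = \int |\mu_e^\pm|\, |(1-\mathcal{P}^\pm)h|^2\, dv \ge 0$ and hence $\mathcal{A}_1^0 \ge -\Delta > 0$ on $\mathcal{V}$. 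It then remains to show $\mathcal{A}_2^0 \ge 0$. Splitting the zeroth-order part of $\mathcal{A}_2^0$, the term $-\int r\hat v_\theta(\mu_p^+ + \mu_p^-)\, dv$ acting as multiplication has a favorable sign precisely under the hypothesis $p\mu_p^\pm \le 0$: since $p^\pm = r(v_\theta \pm \psi^0)$ and $\hat v_\theta = v_\theta/\langle v\rangle$, writing $r\hat v_\theta = \langle v\rangle^{-1}(p^\pm \mp r\psi^0)$ shows this term equals $\langle v\rangle^{-1}(p^\pm\mu_p^\pm) \mp r\psi^0\langle v\rangle^{-1}\mu_p^\pm$ integrated; the first piece is $\le 0$ (good, since it multiplies $|\psi|^2$ with the correct sign relative to $-\Delta_r$... — here I must be careful about signs), and the second is controlled by $\|\psi^0\|_\infty$ small. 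The remaining projection term $-\int \hat v_\theta \mu_e^\pm \mathcal{P}^\pm(\hat v_\theta h)\, dv$ is handled by noting $-\int \mu_e^\pm (\mathcal{P}^\pm g)\bar g\, dv = \int|\mu_e^\pm|\,|\mathcal{P}^\pm g|^2\,dv \ge 0$ with $g = \hat v_\theta h$, so this term is also nonnegative. Combining, $\mathcal{A}_2^0 \ge -\Delta_r - C\|\psi^0\|_\infty$-type error, which is nonnegative for $\psi^0$ sufficiently small because $-\Delta_r$ has a positive spectral gap on $\mathcal{V}^\dagger$ (Dirichlet condition).

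For part (ii), when $\varphi^0 = 0$ the equilibrium simplifies: $e^\pm = \langle v\rangle$ no longer involves the potential, the charge-neutrality structure can be arranged, and the smallness is now imposed directly on $\mu_p^\pm$ rather than on $\psi^0$. The plan is the same decomposition, but now every term involving $\mu_p^\pm$ — namely the multiplication term in $\mathcal{A}_2^0$, the operator $\mathcal{B}^0$ (hence the whole Schur complement $(\mathcal{B}^0)^*(\mathcal{A}_1^0)^{-1}\mathcal{B}^0$), and part of $\mathcal{A}_2^0$'s zeroth-order part — is bounded in operator norm on $L^2_r(\DD)$ by $C\epsilon$, using the decay bound $|\mu_p^\pm|\le \epsilon/(1+|e|^\gamma)$ with $\gamma > 2$ to make the $v$-integrals converge uniformly. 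The term $-\int \hat v_\theta\mu_e^\pm\mathcal{P}^\pm(\hat v_\theta h)\,dv$ remains nonnegative as before. Therefore $\mathcal{L}^0 \ge -\Delta_r - C\epsilon \ge 0$ for $\epsilon$ small. The point is that $-\Delta_r$ provides the coercive leading term and everything else is an $O(\epsilon)$ perturbation; no smallness of $\psi^0$ is needed because $\psi^0$ only enters through $p^\pm$, which appears inside $\mu^\pm$ and $\mu_e^\pm$, and those are already controlled.

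For part (iii) the goal is the opposite: exhibit a test function $\psi \in \mathcal{V}^\dagger$ with $(\mathcal{L}^0\psi,\psi) < 0$. The hypotheses $\mu^+(e,p) = \mu^-(e,-p)$ force the symmetric "purely magnetic" configuration $\varphi^0 = 0$ with $\vE^0 = 0$, so $\mathcal{B}^0$ simplifies and the Schur complement term may vanish or be controllable; the main driver of instability is the multiplication term $-\int r\hat v_\theta(\mu_p^+ + \mu_p^-)\,dv$ in $\mathcal{A}_2^0$. Under $\mu^+(e,p) = \mu^-(e,-p)$ one has $\mu_p^+(e,p) = -\mu_p^-(e,-p)$, and rewriting $r\hat v_\theta(\mu_p^+ + \mu_p^-)$ in terms of the momentum variables, the hypothesis $p\mu_p^-(e,p) \ge c_0 p^2\nu(e)$ produces a strictly negative contribution of size proportional to $\int p^2\nu(e)\langle v\rangle^{-1}\,dv$, which is strictly positive. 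Now introduce the scaling: replace $\mu^\pm(e,p)$ by $\mu^\pm(e,p)$ composed with a dilation in the $(e,p)$ variables, or equivalently rescale the geometry, so that the bad zeroth-order term dominates the good $-\Delta_r$ term (whose lowest eigenvalue on $\mathcal{V}^\dagger$ is a fixed positive number $\lambda_1$). Choosing $\psi$ to be the corresponding principal Dirichlet eigenfunction $\psi_1 e^{i\theta}$, one gets $(\mathcal{L}^0\psi_1,\psi_1) \le \lambda_1\|\psi_1\|^2 - c_0 (\text{scale factor})\|\psi_1\|^2 + (\text{Schur complement term})$; for a suitable scaling the middle term wins, giving $\mathcal{L}^0 \not\ge 0$, hence instability by Theorem \ref{theo-main}(iii). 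The main obstacle here — and in the whole theorem — is controlling the Schur complement term $(\mathcal{B}^0)^*(\mathcal{A}_1^0)^{-1}\mathcal{B}^0$: it is nonnegative, so in part (iii) it fights against the instability and one must check it does not overwhelm the negative term after scaling; this requires an upper bound on $(\mathcal{A}_1^0)^{-1}$ (e.g. $(\mathcal{A}_1^0)^{-1} \le (-\Delta|_{\mathcal{V}})^{-1}$, bounded) and an $L^2$-bound on $\mathcal{B}^0$ that scales slower than the negative term, which is exactly where the precise form of the scaling in Section \ref{sec-examples} must be engineered.
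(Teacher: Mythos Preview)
Your treatment of parts (i) and (ii) is essentially the paper's argument: reduce to $\mathcal{A}_2^0\ge0$ using the nonnegativity of the Schur complement, then split $r\hat v_\theta\mu_p^\pm$ via $r\hat v_\theta=\langle v\rangle^{-1}p^\pm\mp r\psi^0\langle v\rangle^{-1}$ and absorb the error by the spectral gap of $-\Delta_r$. (In (ii) you do more than necessary: since the Schur complement is already $\ge0$, you only need to bound the single multiplication term in $\mathcal{A}_2^0$ by $C\epsilon\|\psi\|^2$; there is no need to estimate $\mathcal{B}^0$ or $(\mathcal{A}_1^0)^{-1}$.)

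Part (iii), however, has a real gap. You correctly identify the Schur complement $(\mathcal{B}^0)^*(\mathcal{A}_1^0)^{-1}\mathcal{B}^0\ge0$ as fighting the instability, and you propose to show it scales more slowly than the negative term. The paper does \emph{not} do this, and it is far from clear your scheme would succeed without a further idea. The key observation you are missing is that the symmetry hypothesis $\mu^+(e,p)=\mu^-(e,-p)$ forces $\mathcal{B}^0\equiv0$ identically, so $\mathcal{L}^0=\mathcal{A}_2^0$ and the Schur complement simply disappears. The proof is a parity argument: the symmetry implies $\mu_p^+(e,p^+)+\mu_p^-(e,p^-)$ is odd in $v_\theta$, and, because $\oD^-$ acting on $f(v_r,-v_\theta)$ equals $\oD^+$ acting on $f(v_r,v_\theta)$, the combination $\mu_e^+\mathcal{P}^+(\hat v_\theta\psi)+\mu_e^-\mathcal{P}^-(\hat v_\theta\psi)$ is also odd in $v_\theta$. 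Both $v$-integrals in $\mathcal{B}^0\psi$ therefore vanish.

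There is a second point in (iii) you have not addressed. Even after $\mathcal{B}^0=0$, the quadratic form $(\mathcal{A}_2^0\psi,\psi)$ still contains the nonnegative projection term $-\iint\mu_e^\pm|\mathcal{P}^\pm(\hat v_\theta\psi)|^2$, which also scales with $K$ and could in principle compete with the negative term. The paper kills it by computing $\mathcal{P}^\pm(\hat v_\theta\psi)$ explicitly (in the homogeneous case $\mathcal{P}(\hat v_\theta\psi)=2r\hat v_\theta\int_0^1\psi\,dr$) and then choosing a specific test function $\psi_*$ with $\int_0^1\psi_*\,dr=0$, so this term vanishes exactly. Your proposed choice of the principal Dirichlet eigenfunction does not have this property, and you would have to supply an additional argument to control that term.
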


The instabilities in a plasma are due to the collective behavior of all the particles.  For a homogeneous equilibrium 
(without an electromagnetic field) Penrose \cite{Pen}  found a beautiful necessary and sufficient condition for stability 
of the Vlasov-Poisson system (VP).  For a BGK mode, the equilibrium has an electric field.  
In that case, proofs of instability for VP (electric perturbations) were first given in \cite{GS95a, GS95b, GS98}
and especially in \cite{Lin, Lin2} for non-perturbative electric fields.  
Once magnetic effects are included, even for a homogeneous plasma, the situation becomes much 
more complicated: see for instance \cite{GuoCPAM97, GS99, GS2000}.  
In a series of three papers \cite{LS1,LS2,LS3} a more general approach was taken that treated fully electromagnetic equilibria 
with electromagnetic perturbations.  The linear stability theory was addressed in \cite{LS1,LS3}, while 
in \cite{LS2} fully nonlinear stability and instability was proven in some cases.  
In all of the work just mentioned, boundary behavior was not addressed; the spatial domains were 
either all space or periodic.  

In this paper we do not address the question of well-posedness of the initial-value problem.  
For VP in $\RR^3$ well-posedness was 
proven in \cite{Pfaff} and \cite{PerthLions}.  For RVM in all space $\RR^3$ it is a famous open problem.  
The relativistic setting seems to be required, for otherwise the Vlasov and Maxwell characteristics would collide.     
However, for RVM in the whole plane $\RR^2$, which is the case most relevant to this paper, 
well-posedness and regularity were proven in \cite{GSch2D}.  
The same is true even in the 2.5 dimensional case  \cite{GSch2.5D}.  
Although global weak solutions exist in $\RR^3$, they are not known to be unique  \cite{DiL}.  
Furthermore, in a spatial domain with a boundary on which one assumes specular and perfect conductor conditions, 
global weak solutions also exist  \cite{Guo1993}.  
Well-posedness and regularity of VP in a convex bounded domain with the specular condition 
was recently proven in \cite{Hwang-Vel}.

A delicate part of our analysis is how to deal with the specular boundary condition within the context of weak solutions.  
This is discussed in subsection \ref{ss-Vlasov}. Properly formulated, the operators $D^\pm$ then are skew-adjoint.  
In this paper, as distinguished from \cite{LS1}, we entirely deal with the weak formulation.  
Our regularity assumptions are essentially optimal.  
In subsection \ref{ss-growing} we prove that the densities $f^\pm$ of a growing mode of the linearized system 
decay at a certain rate as $|v|\to\infty$.  

As in \cite{LS1}, the stability part of Theorem \ref{theo-main} is based on realizing the temporal invariants
of the linearized system.  They have to be delicately calculated due to the weak form of the boundary conditions.  
This is done in subsection \ref{inv}.  The invariants are  the generalized energy $\mathcal I$ and the 
casimirs $\mathcal K_g$, which are a consequence of the assumed symmetry of the system.  
A key part of the stability proof is to minimize the energy with the magnetic potential being held fixed (see subsection \ref{ss-min}).  
The purity of any growing mode, in part {\it (ii)} of Theorem \ref{theo-main}, is a consequence of 
splitting the densities into even and odd parts relative to the variable $v_r$ (see subsection \ref{ss-pure}).   
The proof of stability is completed in subsection \ref{ss-stab}.  

The proof of instability in Theorem \ref{theo-main}{\it(iii)} requires the introduction of a family of 
linear operators $\mathcal{L}^\lambda$ which formally reduce to $\mathcal L^0$ as $\lambda\to0$.  
The technique was first introduced by Lin in \cite{Lin} for the BGK modes.  
These operators explicitly use the particle paths (trajectories) $(X^\pm(s;x,v), V^\pm(s;x,v))$.  
{\it The trajectories reflect specularly a countable number of times at the boundary.}  
We use them to represent the densities $f^\pm$ in integral form, like a Duhamel representation.  
This representation together with the Maxwell equations leads to the family of operators in subsection \ref{ss-operators}.
Self-adjointness requires careful consideration of the trajectories.  
It is then shown in subsection \ref{ss-constr} that $\mathcal{L}^\lambda$ is a positive operator for large $\lambda$, while 
it has a negative eigenvalue for $\lambda$ small because of the hypothesis $\mathcal L^0 \not\ge0$.  
Therefore $\mathcal{L}^\lambda$ has a nontrivial kernel for some $\lambda>0$
If $\psi$ is in the kernel, it is the magnetic potential of the growing mode.  From $\psi$ we construct 
the corresponding electric potential $\varphi$ and densities $f^\pm$.  

In Section \ref{sec-examples} we prove Theorem \ref{theo-examples}.  The stability examples are relatively easy.  
As we see, the basic stabilizing condition  is $p\mu^\pm_p \le0$.  
To construct the unstable examples we make the simplifying assumption that the equilibrium has no electric field 
so that $\mathcal L^0 = \mathcal A_2^0$.  In the expression $(\mathcal{L}^0\psi,\psi)_{L^2}$ the term that has to 
dominate negatively is the one with  $p\mu_p$.  In order to make it dominate, we scale the equilibrium appropriately.  
We first treat the homogeneous case (Theorem \ref{lem-mg-unstab}) and then the purely magnetic case 
(Theorem \ref{lem-unstab-inhomo}).

%
%
%
%


%
%
%
%

\section{The symmetric system}


\subsection{The potentials} \label{ss-pot} 
It is convenient when dealing with the Maxwell equations to introduce the electric scalar potential $\varphi$ and magnetic vector potential $\vA$ through  
\begin{equation}\label{def-potentials} \vE = -\nabla \varphi - \D_t \vA, \qquad  \vB = \nabla \times \vA , \end{equation}
in which without loss of generality, we impose the Coulomb gauge constraint $\nabla \cdot \vA =0$. Note that with these forms, there automatically hold the two Maxwell equations:
$$\dt \vB + \nabla \times \vE =0 ,\qquad \nabla \cdot \vB = 0,$$
whereas the remaining two Maxwell equations become
$$ - \Delta \varphi = \rho ,\qquad \quad \partial_t^2 \vA  - \Delta \vA + \partial_t \nabla \varphi  = \vj.$$

Under the assumption of  radial and longitudinal symmetry, there is no $z$ or $\theta$ dependence.   
We use the polar coordinates $x = (r \cos \theta ,r \sin\theta)$ on the unit disk $\DD$.   
A radial function $f(x)$ is one that depends only on $r$ and in this case we often abuse  notation by writing $f(r)$.   
We also denote the unit vectors by $ e_r = (\cos \theta,\sin \theta)$ and $e_\theta = (-\sin \theta,\cos \theta)$.  
Thus $e_r (x)= n(x)$ is the outward normal vector at $x\in\D\DD$.  
Although the functions do not depend on $\theta$, the unit vectors $e_r$ and $e_\theta$ do.  
Then we may write 
$f^\pm = f^\pm(t,r,v)$, where $v = v_re_r + v_\theta e_\theta$ and $\vA = A_re_r+A_\theta e_\theta$. 

Now the Coulomb gauge in this symmetric setting reduces to $\frac 1r \partial_r(r A_r) =0$,  
so that $A_r = h(t)/r$.  We require the field to have finite energy, meaning that $\vE,\vB \in L^2(\DD)$.  
Thus $E_r = -\D_r \varphi - \D_t A_r \in  L^2(\DD)$ only if $h(t)$ is a constant.  
But if $h(t)$ is a constant, we may as well choose it to be zero because it will not contribute to either $\vE$ or $\vB$.  
For notational convenience let us write $\psi$ in place of $A_\theta$. 
The fields defined through \eqref{def-potentials} then take the form 
\begin{equation}\label{def-EB}
\vE = -\partial_r \varphi e_r  -\dt \psi e_\theta, \qquad \vB = B e_z, \qquad B =  \frac 1r \partial_r (r\psi)).
\end{equation}
We note that 
$$
\hat v\cdot\nabla_x f  =  \hat v_r \D_rf + \frac 1r \hat v_\theta \D_\theta f  
=  \hat v_r \D_rf + \frac 1r \hat v_\theta \Big\{v_\theta \D_{v_r} f  -  v_r \D_{v_\theta} f \Big\} . $$
In these coordinates the RVM system takes the form  
\begin{equation}\label{VMsys}
\left\{ \begin{array} {rlll} 
\dt f^+ + \hat v_r \partial_r f^+ + \Big(E_r + \hat v_\theta B+\frac 1r  v_\theta \hat v_\theta\Big)\D_{v_r}f^+ + \Big(E_\theta - \hat v_r B -\frac 1r  v_r \hat v_\theta\Big) \D_{v_\theta} f ^+=0,
\\
\dt f^- + \hat v_r \partial_r f^-  -\Big(E_r + \hat v_\theta B - \frac 1r v_\theta \hat v_\theta \Big) \D_{v_r}f^- -  \Big(E_\theta - \hat v_r B  + \frac 1r  v_r \hat v_\theta\Big) \D_{v_\theta} f ^-=0,
\end{array} \right.  
\end{equation}
\begin{equation}\label{Maxwell-eqs} 
 \begin{aligned}   
        - \Delta \varphi & =\rho =\int_{\RR^2} (f^+ - f^-)(t,r,v) \;dv, \\
 \dt \partial_r \varphi &= j_r = \int_{\RR^2} \hat v_r (f^+ - f^-)(t,r,v)\; dv,  \\
 (\partial_t^2 - \Delta_r) \psi &=  j_\theta = \int_{\RR^2} \hat v_\theta (f^+ - f^-)(t,r,v)\; dv,  
\end{aligned}   
 \end{equation}
where $\Delta_r = \Delta - \frac 1{r^2}$.    
The system \eqref{VMsys} - \eqref{Maxwell-eqs} is accompanied by the specular boundary condition on $f^\pm$,  
which is now equivalent to the evenness of $f^\pm$ in $v_r$ at $r=1$.  In particular, $j_r=0$ on $\D\DD$.    
The condition $\vB\cdot n =0$ is automatic.  
The  boundary conditions on $\varphi$ and $\psi$  are 
\begin{equation}\label{BCs} \partial_r\varphi (t,1) = const.,\qquad \psi (t,1)= const.\end{equation}
The Neumann condition on $\varphi$ comes naturally from the second Maxwell equation in \eqref{Maxwell-eqs} 
with $j_r=0$.  
 The Dirichlet condition on $\psi$ comes naturally from  $0 = \vE \times n = (0,0,-\partial_t \psi) $. 

%

\subsection{Linearization} \label{ss-lin}
We linearize the Vlasov-Maxwell near the equilibrium $(f^{0,\pm},\vE^0,B^0)$.  From \eqref{VMsys}, the linearized 
Vlasov equations are 
\begin{equation}  \label{linearization} 
 \begin{aligned}
 \dt f^++\oD ^+f^+  &= - (E_r + \hat v_\theta B)\D_{v_r}  f^{0,+}- (E_\theta - \hat v_r B ) \D_{v_\theta}  f^{0,+} ,
\\ \dt f^-+\oD ^-f^-  &=  ~~(E_r + \hat v_\theta B)\D_{v_r}  f^{0,-}+ (E_\theta - \hat v_r B ) \D_{v_\theta}  f^{0,-}  .
\end{aligned}                        \end{equation}
The first-order differential operators 
$\oD^\pm: = \hat v \cdot \nabla_x  \pm (\vE^0 + \hat v\times \vB^0)\cdot \nabla_v $ 
now take the form 
\begin{equation}\label{def-opD}
\begin{aligned}
\oD ^+:= \hat v_r \D_r  + \Big(E_r^0 + \hat v_\theta B^0+\frac 1r v_\theta \hat v_\theta\Big)\D_{v_r}  - \Big(\hat v_r B^0 +\frac 1r v_r \hat v_\theta\Big)   \D_{v_\theta}  ,
\\
\oD ^-:= \hat v_r \D_r   - \Big(E_r^0 + \hat v_\theta B^0- \frac 1r v_\theta \hat v_\theta\Big)\D_{v_r}  + \Big(\hat v_r B^0  - \frac 1r v_r \hat v_\theta\Big)   \D_{v_\theta}  .
\end{aligned}
\end{equation}
In order to compute the right-hand sides of \eqref{linearization} more explicitly, 
we differentiate the definition $f^{0,\pm} = \mu^\pm(e^\pm,p^\pm)$ to get 
$$ \D_{v_r}  f^{0,\pm} = \mu_e^\pm \hat v_r, \qquad \D_{v_\theta}  f^{0,\pm} = \mu_e^\pm \hat v_\theta + r \mu_p^\pm .$$
Thus, together with the forms of $\vE$ and $B$  in \eqref{def-EB}, we  calculate 
$$\begin{aligned}
 (E_r + \hat v_\theta B)&\D_{v_r}  f^{0,+}+ (E_\theta - \hat v_r B ) \D_{v_\theta}  f^{0,+}     \\
 & =  (E_r + \hat v_\theta B) \mu^+_e \hat v_r + (E_\theta - \hat v_r B ) (\mu^+_e \hat v_\theta + r \mu^+_p)   \\
 & = -\mu^+_e \hat v_r \D_r \varphi - \mu^+_p \hat v_r \D_r (r\psi) - \mu^+_e \hat v_\theta \dt \psi - r \mu^+_p \dt \psi   \\
 &= -\mu^+_e\oD ^+ \varphi  - \mu^+_p\oD ^+(r\psi)  - ( \mu^+_e \hat v_\theta  + r\mu^+_p)  \dt \psi,
 \end{aligned}$$
where the last line is due to the fact that $\oD ^+ \varphi = \hat v_r \D_r \varphi$ for radial functions.  
Of course a similar calculation holds for $f^{0,-}$.   
Thus the linearization \eqref{linearization} becomes
\begin{equation}\label{lin-VM} \begin{aligned}
 \dt f^++\oD ^+f^+  
 &=~~ \mu^+_e\oD ^+ \varphi  + \mu^+_p\oD ^+(r\psi)  +  \D_{v_\theta}[\mu^+] \dt \psi \\  
 &=  \ \   \mu_e^+\hat v_r \D_r\varphi  +  \mu_p^+ \hat v_r \D_r(r\psi) + ( \mu^+_e \hat v_\theta  + r\mu^+_p)  \dt \psi \\ 
 \dt f^-+\oD ^-f^- 
&=  - \mu^-_e\oD ^- \varphi  - \mu^-_p\oD ^-(r\psi)  -  \D_{v_\theta}[\mu^-]\dt \psi \\  
&=   - \mu_e^-\hat v_r \D_r\varphi  -  \mu_p^- \hat v_r \D_r(r\psi)  - ( \mu^-_e \hat v_\theta  + r\mu^-_p)  \dt \psi 
\end{aligned}
\end{equation}
Of course,  linearization does not alter the Maxwell equations \eqref{Maxwell-eqs}.   
As for boundary conditions, we naturally take the specular condition on $f^\pm$ and 
\begin{equation}\label{BC-Maxwell} 
\D_r \varphi(t,1) = 0, \qquad \psi(t,1) =0.
\end{equation}

\subsection{The Vlasov operators}\label{ss-Vlasov}

The Vlasov operators $\oD ^\pm$ are formally given by \eqref{def-opD}.  Their relationship to the boundary condition 
is given in the next lemma.
\begin{lemma}\label{lem-on-D}
Let $g(x,v)=g(r,v_r,v_\theta)$ be a $C^1$ radial function on $\overline\DD\times\RR^2$.  Then $g$ satisfies the specular boundary condition 
if and only if  
$$ \int_\DD \int_{\RR^2} g\ \oD^\pm h\ dvdx  =  -  \int_\DD \int_{\RR^2} \oD^\pm g\ h\ dvdx $$
(either + or -) for all radial $C^1$ functions $h$ with $v$-compact support that satisfy the specular condition.   
\end{lemma}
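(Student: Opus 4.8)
\emph{Proof proposal.} The plan is to recognize $\oD^\pm$ as a phase–space transport field and to integrate by parts, keeping track of the single boundary contribution coming from $\D\DD$. First I would note that $\oD^\pm$ is the vector field $W^\pm := (\hat v,\ \pm(\vE^0 + \hat v\times\vB^0))$ on $\DD\times\RR^2$, and that $W^\pm$ is divergence-free in the Cartesian variables $(x,v)$: indeed $\hat v$ does not depend on $x$, $\vE^0$ does not depend on $v$, and $\nabla_v\cdot(\hat v\times\vB^0) = \vB^0\cdot(\nabla_v\times\hat v) = 0$ since $\hat v = \nabla_v\langle v\rangle$ is a $v$-gradient (this is just Liouville's theorem for the relativistic Lorentz-force flow). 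Hence, for $g$ as in the statement and any admissible $h$, the product $gh$ is $C^1$ on $\overline\DD\times\RR^2$ with compact $v$-support, and $g\,\oD^\pm h + (\oD^\pm g)\,h = \nabla_{x,v}\cdot\big((gh)\,W^\pm\big)$. Since $\vE^0,\vB^0\in C^1(\overline\DD)$, the divergence theorem on $\DD\times\RR^2$ applies; the contribution at $|v|\to\infty$ vanishes by compact support, and the outward normal on $\D\DD\times\RR^2$ is $(e_r,0)$, so $W^\pm\cdot(e_r,0) = \hat v\cdot e_r = \hat v_r$. This gives
\[
\int_\DD\int_{\RR^2}\big(g\,\oD^\pm h + (\oD^\pm g)\,h\big)\,dv\,dx \;=\; \int_{\D\DD}\int_{\RR^2} g\,h\,\hat v_r\,dv\,d\sigma_x ,
\]
and the claimed identity is equivalent to the vanishing of this boundary term for all admissible $h$. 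Working in Cartesian coordinates here sidesteps the apparent $1/r$ singularities in the coordinate form \eqref{def-opD}.

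Next I would reduce the boundary term using radial symmetry. For each $x\in\D\DD$ the map $v\mapsto(v_r,v_\theta) = (v\cdot e_r(x),\,v\cdot e_\theta(x))$ is a rotation, hence measure-preserving, and since $g,h$ are radial the inner integral is independent of $x$; thus the boundary term equals $2\pi\int_{\RR^2} g(1,v_r,v_\theta)\,h(1,v_r,v_\theta)\,\hat v_r\,dv_r\,dv_\theta$. Now $\hat v_r = v_r/\langle v\rangle$ is odd in $v_r$, while the specular condition on $h$ means precisely that $h(1,\cdot)$ is even in $v_r$; hence $h(1,\cdot)\,\hat v_r$ is odd in $v_r$. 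Writing $g(1,\cdot) = g^{\mathrm e} + g^{\mathrm o}$ for its even/odd parts in $v_r$, the even part pairs to an odd integrand and drops out, leaving the boundary term equal to $2\pi\int g^{\mathrm o}(v_r,v_\theta)\,h(1,v_r,v_\theta)\,\hat v_r\,dv_r\,dv_\theta$.

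From here the two implications follow. If $g$ satisfies the specular condition then $g^{\mathrm o}\equiv 0$, the boundary term vanishes, and the identity holds for every admissible $h$. For the converse, assume the identity holds for all admissible $h$, so $\int_{\RR^2} g^{\mathrm o}\,h(1,\cdot)\,\hat v_r\,dv = 0$ for all such $h$. Given any $v^{*} = (v_r^{*},v_\theta^{*})$ with $v_r^{*}\ne 0$, I would test against $h(x,v) = \zeta(|x|)\,\chi(v)\,g^{\mathrm o}(v_r,v_\theta)\,\hat v_r$, where $v_r = v\cdot e_r(x)$, $v_\theta = v\cdot e_\theta(x)$, $\chi = \chi(|v|)\ge 0$ is a smooth radial $v$-cutoff with $\chi(v^{*})>0$, and $\zeta\in C^\infty([0,1])$ vanishes near $0$ with $\zeta(1) = 1$. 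This $h$ is radial, has compact $v$-support, is even in $v_r$ at $r=1$ (both $g^{\mathrm o}\hat v_r$ and $\chi$ are even in $v_r$), and is $C^1$ on $\overline\DD\times\RR^2$; plugging it into the displayed identity gives $2\pi\int_{\RR^2} |g^{\mathrm o}(v_r,v_\theta)|^2\,\hat v_r^{\,2}\,\chi(v)\,dv = 0$, and since the integrand is continuous and nonnegative it vanishes identically on $\mathrm{supp}\,\chi$, so $g^{\mathrm o}(v^{*}) = 0$. As $v^{*}$ was arbitrary with $v_r^{*}\ne 0$ and $g^{\mathrm o}$ vanishes on $\{v_r=0\}$ by oddness, we get $g^{\mathrm o}\equiv 0$, i.e. $g$ is even in $v_r$ at $r=1$, which is the specular condition.

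I expect the only genuine point requiring care — the main obstacle — to be the construction of the test function in the converse direction near the origin $x=0$: the frame $(e_r,e_\theta)$, hence the coordinates $(v_r,v_\theta)$, degenerates there and the coordinate expression \eqref{def-opD} of $\oD^\pm$ carries apparent $1/r$ singularities, so one must verify that the chosen $h$ is honestly $C^1$ on $\overline\DD\times\RR^2$ (and that the admissible class is really large enough). The radial cutoff $\zeta$ supported away from $r=0$, together with doing the integration-by-parts identity in Cartesian variables, resolves both issues. The divergence-free property of $\oD^\pm$, which is what guarantees no \emph{interior} boundary contribution appears, should be recorded explicitly as it is used repeatedly in the sequel.
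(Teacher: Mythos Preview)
Your proposal is correct and follows the same integration-by-parts route as the paper: both reduce the identity to the vanishing of the boundary integral $2\pi\int g(1,v_r,v_\theta)h(1,v_r,v_\theta)\hat v_r\,dv$ and then use the $v_r$-parity to characterize the specular condition. Your treatment is in fact more careful than the paper's on two points---you make the divergence-free property of $\oD^\pm$ explicit, and in the converse direction you construct an explicit admissible test function (with the cutoff $\zeta$ near the origin) rather than asserting, as the paper does, that the products $h(1,\cdot)\hat v_r$ range over all odd test functions $k$.
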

\begin{proof} Integrating by parts in $x$ and $v$, we have 
$$ 
\int_\DD  \int_{\RR^2} \{g\ \oD ^\pm h  +  \oD^\pm g\ h\} \ dvdx =  2\pi\int g h\ \hat v \cdot e_r \; dv\Big|_{r=1}.  $$
If $g$ satisfies the specular condition, then $g$ and $h$ are even functions of $v_r=v\cdot e_r$ on $\partial\DD$, 
so that the last integral vanishes.  Conversely, if $\int g h\ \hat v \cdot e_r \; dv=0$ on $\partial\DD$, then 
$\int g(1, v_r, v_\theta)\ k(v_r,v_\theta) dv=0$ for all test functions that are odd in $v_r$, 
so it follows that $g(1,v_r,v_\theta)$ is an even function of $v_r$.  
\end{proof}

Therefore we {\it define} the domain of $\oD^\pm$ to be 
\begin{equation}\label{domainD} 
 \text{dom}(\oD^\pm)  = \Big \{g\in\cH ~\Big|~\ \oD^\pm g \in \cH,\ 
 \langle\oD^\pm g,h\rangle_\cH  =  -  \langle g,\oD^\pm h\rangle_\cH, \  
\forall h\in \mathcal C \Big\},  
\end{equation}
where $\mathcal C$ denotes the set of radial $C^1$ functions $h$ with $v$-compact support that satisfy the specular condition.  
We say that a function $g\in\cH$ with $ \oD^\pm g \in \cH$ satisfies the {\it specular boundary condition in the weak sense} 
if $g\in$ dom$(\oD^\pm)$.  Clearly, $ \text{dom}(\oD^\pm)$ is dense in $\cH$ since by Lemma \ref{lem-on-D} it contains the space $\mathcal{C}$ of test functions, which is of course dense in $\cH$. 

It follows that 
\begin{equation}\label{skewadj}
 \langle\oD^\pm g,h\rangle_\cH  =  -  \langle g,\oD^\pm h\rangle_\cH  \end{equation}
for all $g,h\in \text{dom}(\oD^\pm)$.  Indeed, given $h\in \text{dom}(\oD^\pm)$, we just approximate 
it in $\cH$ by a sequence of test functions in $\mathcal{C}$, and so \eqref{skewadj} holds thanks to Lemma \ref{lem-on-D}. 

Furthermore, with these domains, $\oD^\pm$ are {\it skew-adjoint} operators on $\cH$. 
Indeed, the skew-symmetry has just been stated.  To prove the skew-adjointness of $\oD^+$, 
suppose that $f,g\in\cH$ and $ \langle f,h\rangle_\cH  =  -  \langle g,\oD^+ h\rangle_\cH$  
for all  $h\in \text{dom}(\oD^\pm)$.  
Taking $h\in \mathcal{C}$ to be a test function, we see that $\oD^+ g = f $ in the sense of distributions.  
Therefore \eqref{skewadj} is valid for all such $h$, which means that $g\in\text{dom}(D^+)$.  

\subsection{Growing modes}\label{ss-growing}
Now we can state some necessary properties of any growing mode. 
Recall that by definition a growing mode satisfies  $f^\pm \in \cH $ 
and $\vE, \vB\in L^2(\DD)$. 
\begin{lemma} 
\label{growprops}  
Let $(e^{\lambda t} f^\pm, e^{\lambda t}{\vE}, e^{\lambda t} {\vB})$ with $\R\lambda>0$ 
be any growing mode.  Then 
$\vE,\vB\in H^1(\DD)$ and 
$$
\iint_{\DD\times \RR^2} \ (|f^\pm|^2  +  |\oD^\pm f^\pm|^2)\  \frac {dvdx}{|\mu_e^\pm|}  <  \infty.$$
\end{lemma}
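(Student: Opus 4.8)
The plan is to use the growing-mode ansatz to convert the time evolution into an algebraic/elliptic problem in the spatial and velocity variables, and then extract regularity and decay. Substituting $(e^{\lambda t}f^\pm, e^{\lambda t}\vE, e^{\lambda t}\vB)$ into the linearized system \eqref{lin-VM} gives $\lambda f^\pm + \oD^\pm f^\pm = \mp(\mu_e^\pm\oD^\pm\varphi + \mu_p^\pm\oD^\pm(r\psi)) \mp(\mu_e^\pm\hat v_\theta + r\mu_p^\pm)\lambda\psi$, so that $\oD^\pm f^\pm = -\lambda f^\pm + (\text{terms built from }\varphi,\psi)$. The first observation is that the right-hand side lies in $\cH^\pm$: the term $\lambda f^\pm$ is in $\cH$ by hypothesis; the terms involving $\mu_e^\pm\oD^\pm\varphi$ and $\mu_e^\pm\hat v_\theta\psi$ are controlled because $\oD^\pm\varphi = \hat v_r\D_r\varphi$ and the $\cH$-norm carries exactly the weight $|\mu_e^\pm|$, while $\varphi\in H^1$ (to be established) makes $\D_r\varphi\in L^2(\DD)$; the terms with $\mu_p^\pm$ are handled by the bound $|\mu_p^\pm|^2/|\mu_e^\pm| \le C_\mu/(1+|e|^\gamma)$ in \eqref{mu-cond}, which is precisely what makes $\mu_p^\pm\oD^\pm(r\psi)$ and $r\mu_p^\pm\psi$ lie in $L^2_{|\mu_e^\pm|}$ once $\psi\in H^1$. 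So the $|\oD^\pm f^\pm|^2$ estimate will follow once the field regularity $\vE,\vB\in H^1$ is in hand.

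To get $\vE,\vB\in H^1(\DD)$, I would return to the Maxwell equations \eqref{Maxwell-eqs} with the growing-mode ansatz: $-\Delta\varphi = \rho$, $\lambda\D_r\varphi = j_r$, and $(\lambda^2 - \Delta_r)\psi = j_\theta$, where $\rho,j_r,j_\theta$ are the $v$-moments of $f^+ - f^-$. The key point is that $f^\pm\in\cH^\pm$ together with the decay of the weight $|\mu_e^\pm|$ like a power $|v|^{-\gamma}$ with $\gamma>2$ guarantees, by Cauchy–Schwarz in $v$, that $\rho, j_r, j_\theta \in L^2(\DD)$ (one writes $\int|f^\pm|\,dv = \int |\mu_e^\pm|^{1/2}|f^\pm|\cdot|\mu_e^\pm|^{-1/2}\,dv$ and notes $\int|\mu_e^\pm|^{-1}\,dv < \infty$ since $\gamma>2$ in dimension two). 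Then $-\Delta\varphi\in L^2$ with the Neumann condition $\D_r\varphi(1)=0$ gives $\varphi\in H^2_r(\DD)$ by elliptic regularity, hence $\vE = -\D_r\varphi\,e_r - \lambda\psi\,e_\theta \in H^1$; similarly $(\lambda^2-\Delta_r)\psi\in L^2$ with the Dirichlet condition, and using the $H^{2\dagger}$ formulation (so that $\Delta_r$ becomes a genuine Laplacian on $\psi e^{i\theta}$) gives $\psi\in H^{2\dagger}$, hence $B = \frac1r\D_r(r\psi)\in H^1$.

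The main obstacle — and the reason the two conclusions are genuinely coupled — is that the $\oD^\pm f^\pm\in\cH^\pm$ estimate needs $\varphi,\psi\in H^1$, while verifying $\varphi,\psi\in H^2$ only needs $f^\pm\in\cH^\pm$, so there is no real circularity once one orders the steps correctly: first establish $\rho,j\in L^2$ from $f^\pm\in\cH^\pm$ alone, then upgrade the fields to $H^2$ (hence $H^1$ for $\vE,\vB$), and only then feed the field regularity into the Duhamel-type identity for $\oD^\pm f^\pm$. A delicate technical point is justifying all the integrations by parts and the elliptic estimates within the weak framework of subsection \ref{ss-Vlasov} — in particular that the distributional identity $\oD^\pm f^\pm = -\lambda f^\pm + (\cdots)$ is legitimate and that $f^\pm\in\mathrm{dom}(\oD^\pm)$ in the sense of \eqref{domainD}, i.e. the specular condition holds weakly; this is what allows one to treat $\oD^\pm$ as the honest skew-adjoint operator and to control $\oD^\pm\varphi = \hat v_r\D_r\varphi$ and $\oD^\pm(r\psi)$ by the $H^1$ norms of the potentials. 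The moment bounds, being the one genuinely quantitative ingredient, are where the hypothesis $\gamma>2$ is used essentially.
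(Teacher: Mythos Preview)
There is a genuine gap. Your ordering ``first establish $\rho,j\in L^2$ from $f^\pm\in\cH^\pm$ alone, then upgrade the fields'' fails at the very first step. Recall that $\cH^\pm = L^2_{|\mu_e^\pm|}$ means $\iint |\mu_e^\pm|\,|f^\pm|^2\,dvdx<\infty$, and the weight $|\mu_e^\pm|$ \emph{decays} (by \eqref{mu-cond} we have $|\mu_e^\pm|\le C_\mu/(1+|e|^\gamma)$). Your Cauchy--Schwarz splitting $\int|f^\pm|\,dv\le\big(\int|\mu_e^\pm||f^\pm|^2\big)^{1/2}\big(\int|\mu_e^\pm|^{-1}\big)^{1/2}$ therefore requires $\int|\mu_e^\pm|^{-1}\,dv<\infty$, but $|\mu_e^\pm|^{-1}\ge(1+|e|^\gamma)/C_\mu$ \emph{grows}, and this integral diverges. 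So $f^\pm\in\cH^\pm$ does not by itself give $\rho,j\in L^2(\DD)$, and the argument stalls.

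What is missing is precisely the upgrade $\iint|f^\pm|^2/|\mu_e^\pm|\,dvdx<\infty$ (the stronger weight), which is part of the lemma's conclusion and which you never actually prove. Once this bound is known, the correct Cauchy--Schwarz reads $\int|f^\pm|\,dv\le\big(\int|f^\pm|^2/|\mu_e^\pm|\big)^{1/2}\big(\int|\mu_e^\pm|\big)^{1/2}$, and now $\int|\mu_e^\pm|\,dv<\infty$ since $\gamma>2$. The paper obtains this upgrade \emph{first}, before touching the Maxwell equations: one divides the Vlasov equation by $|\mu_e^\pm|$ to get $\lambda g^\pm+\oD^\pm g^\pm=h^\pm$ with $g^\pm=f^\pm/|\mu_e^\pm|$ and $h^\pm\in\cH$ (this uses only $\vE,\vB\in L^2$ via \eqref{mu-cond}), then pairs against a regularization $g_\ep=w_\ep g^\pm$, $w_\ep=|\mu_e^\pm|/(\ep+|\mu_e^\pm|)$, so that skew-adjointness kills $\langle\oD^\pm g_\ep,g_\ep\rangle_\cH$ and one is left with $|\lambda|\|g_\ep\|_\cH\le\|h^\pm\|_\cH$. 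Letting $\ep\to0$ gives $g^\pm\in\cH$, i.e.\ $\iint|f^\pm|^2/|\mu_e^\pm|<\infty$. Only then do $\rho,j\in L^2$, the elliptic estimates for $\varphi,\psi$, and finally the $\oD^\pm f^\pm$ bound follow, in the order you outlined. Your proposal has the right final steps but skips the essential initial one.
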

\begin{proof}
The fields are given by  \eqref{def-EB} where $\varphi, \psi$ satisfy the elliptic system \eqref{Maxwell-eqs} 
with Dirichlet or Neumann boundary conditions, expressed weakly.  
The densities $f^\pm$ satisfy \eqref{lin-VM}.  Explicitly, 
\begin{equation}\label{Vlasov3} 
 \lambda f^\pm + \oD ^\pm f^\pm  
 =  \pm\mu_e^\pm \hat v_r\D_r\varphi  \pm  \mu_p^\pm \hat v_r\D_r(r\psi)  
 \pm  \lambda(\mu_e^\pm\hat v_\theta + r\mu_p^\pm) \psi. 
 \end{equation}
 This equation implies that $\oD^\pm f^\pm\in \cH$.  
 The specular boundary condition on $f^\pm$ is expressed 
 weakly by saying that $f^\pm\in\text{dom}(\oD^\pm)$.  
Dividing  by $|\mu_e^\pm|$ and defining  $g^\pm  = f^\pm/ |\mu_e^\pm|$,  we write 
the equation in the form $\lambda g^\pm + \oD^\pm g^\pm = h^\pm$, where the right side 
$h^\pm$   belongs to $\cH = L^2_{ |\mu_e^\pm|}(\DD\times \RR^2)$ thanks to the decay assumption \eqref{mu-cond} on $\mu^\pm$.  

Letting 
$w_\ep =  |\mu_e^\pm|/(\ep  +   |\mu_e^\pm|)$ for $\ep>0$ and $g_\ep = w_\ep g^\pm$, we have 
 $\langle \lambda g_\ep + \oD^\pm g_\ep , g_\ep \rangle_\cH 
= \langle w_\ep h^\pm, g_\ep\rangle_\cH. $  
It easily follows that 
$g_\ep\in\cH$.  In fact, $g_\ep\in\text{dom}(\oD^\pm)$, which means that the specular boundary condition holds 
in the weak sense, so that \eqref{skewadj} is valid for it.  In  \eqref{skewadj} we take both functions to be $g_\ep$ 
and therefore $\langle \oD^\pm g_\ep , g_\ep \rangle_\cH  = 0$.  It follows that 
$$| \lambda| \|g_\ep\|_\cH^2  = | \langle w_\ep h^\pm, g_\ep\rangle_\cH |\le \|h^\pm\|_{\cH} \|g_\ep\|_{\cH}. $$ Letting $\ep\to0$, we infer that $g^\pm \in \cH$, which means that 
$\iint |f^\pm|^2 / |\mu_e^\pm| dvdx < \infty$.    

Now the elliptic system for the field is  
$$-\Delta\varphi = \int (f^+-f^-)dv, \quad (\lambda^2 -\Delta_r)\psi = \int \hat v_\theta (f^+-f^-)dv $$ 
together with the boundary conditions $\D_r \varphi(t,1) = 0, \  \psi(t,1) =0$, which are expressed weakly.  
Because of $\iint |f^\pm|^2 / |\mu_e^\pm| dvdx < \infty$, 
the right sides of this system are now known to be finite a.e. and to belong to $L^2(\DD)$.  
So  it follows that $\psi, \varphi \in H^2(\DD)$ and  $\vE, \vB\in H^1(\DD)$.  This is the first assertion of the lemma.   
Nevertheless, we emphasize that $\oD^\pm f^\pm$ does not satisfy the specular boundary condition.  
However, directly from \eqref{Vlasov3} it is now clear that $\iint |\oD^\pm f^\pm|^2 / |\mu_e^\pm| dvdx < \infty  $. 
This is the last assertion.  
\end{proof}

\section{Linear stability}


\subsection{Formal argument}\label{sec-formal} 
Before presenting the stability proof, let us sketch a formal proof. 
We consider the linearized RVM system \eqref{lin-VM}. For sake of presentation, let us consider the case with one particle $f = f^+$, and thus drop all the superscripts $+$. We have the linearized equation: 
$$ 
\dt f +\oD f =~~\mu_e \oD \varphi  + \mu_p \oD  (r \psi) + (\mu_e \hat v_\theta+ r \mu_p) \D_t \psi.$$
The key ingredient for stability is the fact that the functional  
$$
\begin{aligned}\mathcal{I}(f,\varphi,\psi) &:= \int _\DD  \int \Big[ \frac {1}{|\mu_e|} |f - r \mu_p \psi|^2 - r \mu_p \hat v_\theta |\psi|^2 \Big] \; dvdx + \int_\DD \Big[ |\vE|^2 + |\vB|^2 \Big] \; dx  
\end{aligned}$$ 
is time-invariant, which can be found by formally expanding the usual nonlinear energy-Casimir functional around the equilibria. Next, we then write the linearized equation in the form 
\begin{equation}\label{conserved-formVM}\begin{aligned}
\dt (f -  \mu_e \hat v_\theta \psi- r\mu_p \psi) +\oD  (f - \mu_e \varphi - r\mu_p  \psi) =0.
\end{aligned}\end{equation}
We observe that $(f -  \mu_e \hat v_\theta \psi - r\mu_p \psi)$ stays orthogonal to $\ker \oD $ for all time, and that in case of stability we would expect that $f - \mu_e \varphi - r\mu_p  \psi$ asymptotically belongs to $\ker \oD $. That is, if $\mathcal{P}$ is the projection onto the kernel, one would have at large times 
$$\begin{aligned} f - \mu_e \varphi - r\mu_p  \psi = \mathcal{P}(f - \mu_e \varphi - r\mu_p  \psi), \qquad \mathcal{P} (f -  \mu_e \hat v_\theta \psi - r\mu_p \psi) =0.
 \end{aligned}
$$
Adding up these identities, we obtain $$f = \mu_e (1-\mathcal{P})\varphi + r\mu_p  \psi + \mu_e \mathcal{P}(\hat v_\theta\psi).$$ This asymptotic description of $f$ is 
essential to the proof of stability, which we will prove rigorously in subsection \ref{ss-min}; see \eqref{id-f0}. Next, by plugging the identity into the functional $\mathcal{I}(f,\varphi, \psi)$, 
we will obtain $$\begin{aligned}\mathcal{I}(f,\varphi, \psi) = (\mathcal{A}_1^0\varphi,\varphi )_{L^2} + ( \mathcal{A}_2^0 \psi, \psi)_{L^2}+  \int_\DD  |\dt \psi|^2\; dx,
\end{aligned}$$
in which the operators $\mathcal{A}_j^0$ are defined as in \eqref{operators-0}.
Using the identity $ \mathcal{A}_1^0\varphi = \mathcal{B}^0 \psi$, which is obtained from the Poisson equation, to eliminate $\varphi$, we can formally write
\begin{equation}\label{key-identity}
\begin{aligned}\mathcal{I}(f,\varphi, \psi) = ( \mathcal{L}^0 \psi, \psi)_{L^2}+  \int_\DD  |\dt  \psi|^2\; dx,
\end{aligned}\end{equation} 
with $\mathcal{L}^0 := \mathcal{A}_2^0 + (\mathcal{B}^0)^* (\mathcal{A}_1^0)^{-1} \mathcal{B}^0$. (In fact, we are only able to prove a reverse inequality $(\ge)$, which however suffices for stability). Now, if we assumed that there were a growing mode of the linearized system, then the time-invariant functional $\mathcal{I}(f,\varphi, \psi)$ must be zero. Thus \eqref{key-identity} shows that $\mathcal{L}^0 \not \ge 0$. That is, $\mathcal{L}^0\ge0$ formally implies the stability. 

\subsection{Key operators} \label{ss-keyops} 
 In this subsection, we shall derive the basic properties of 
the operators $\mathcal{A}_j^0$ and $\mathcal{B}^0$ defined in \eqref{operators-0}.  Let us recall that $\mathcal{P}^\pm$ are the orthogonal projections of $\cH$ onto the kernels 
$$\ker(\oD^\pm) = \Big\{ f \in \text{dom}(\oD^\pm) ~\Big|~ \oD^\pm f = 0\Big\},$$
and the key operators:
\begin{equation}\label{operators-0re}\begin{aligned}  \mathcal{A}_1^0 \varphi & = - \Delta \varphi  - \int \mu^+_e(1-\mathcal{P}^+) \varphi \; dv - \int \mu^-_e (1-\mathcal{P}^-) \varphi \; dv
\\
 \mathcal{A}_2^0 \psi & =- \Delta_r \psi   -  \int r\hat v_\theta (\mu_p^+ + \mu_p^-) \; dv \psi  -   \int \hat v_\theta\Big(\mu_e^+ \mathcal{P}^+(\hat v_\theta \psi) +\mu_e^- \mathcal{P}^-(\hat v_\theta \psi) \Big)\; dv
\\
\mathcal{B}^0 \psi  &= r \int \Big(\mu_p^+ +  \mu_p^-\Big )\; dv\psi + \int \Big(\mu_e^+\mathcal{P}^+(\hat v_\theta \psi) +\mu_e^-\mathcal{P}^-(\hat v_\theta \psi) \Big) \; dv .
\end{aligned}\end{equation}
We also recall that the functional space $\mathcal{V}$ consists of functions in $H^2_r(\DD )$ that satisfy the Neumann boundary condition and have zero average over $\DD$, whereas $\mathcal{V}^\dagger$ consists of functions in $H^{2\dagger}(\DD)$ that satisfy the Dirichlet condition on $\D\DD$. 

\begin{lemma}\label{lem-AB0property}~

(i) $\mathcal{A}_1^0$ is self-adjoint and positive definite on $L^2_r(\DD )$ with  domain $\mathcal{V}$. 
 $\mathcal{A}_1^0$ is a one-to-one map from $\mathcal{V}$ onto the set $\{g \in L^2_r\ |\  \int_\DD g \; dx =0\}$.
 In particular, $(\mathcal{A}_1^0)^{-1}$ is well-defined on the range of $\mathcal{B}^0$. 

(ii) $\mathcal{B}^0$ is a bounded operator on $L^2_r(\DD )$. 

(ii) $\mathcal{A}_2^0$ and $\mathcal{L}^0$ are self-adjoint on $L^2_r(\DD )$ with common domain $\mathcal{V}^\dagger$. 

\end{lemma}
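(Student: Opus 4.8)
The strategy, uniform across the three parts, is to split $\mathcal{A}_1^0$ and $\mathcal{A}_2^0$ each into a self-adjoint ``principal part'' --- a Neumann, respectively Dirichlet, Laplacian --- plus a ``lower-order'' piece assembled from $v$-integrals, and then to invoke the elementary fact that a bounded symmetric perturbation of a self-adjoint operator is again self-adjoint with the same domain. Every $v$-integral is tamed by the decay hypothesis \eqref{mu-cond}: since $\gamma>2$ and $\varphi^0$ is bounded, the radial functions $r\mapsto\int|\mu_e^\pm|\,dv$ and $r\mapsto\int|\mu_p^\pm|\,dv$ are bounded on $\overline\DD$, $L^2_r(\DD)\hookrightarrow\cH^\pm$, and $\mathcal{P}^\pm$ has norm $\le 1$ on $\cH^\pm$. \emph{Part (ii)} follows at once: the first term of $\mathcal{B}^0$ is multiplication by the bounded radial function $r\int(\mu_p^++\mu_p^-)\,dv$, while for the second term Cauchy--Schwarz in $v$ (using boundedness of $\int|\mu_e^\pm|\,dv$) together with $|\hat v_\theta|\le1$ give
$$\Big\|\int\mu_e^\pm\,\mathcal{P}^\pm(\hat v_\theta\psi)\,dv\Big\|_{L^2_r}^2 \ \le\ C\,\|\mathcal{P}^\pm(\hat v_\theta\psi)\|_{\cH^\pm}^2 \ \le\ C\,\|\hat v_\theta\psi\|_{\cH^\pm}^2 \ \le\ C\,\|\psi\|_{L^2_r}^2 ,$$
so $\mathcal{B}^0$, and with it $(\mathcal{B}^0)^*$, is bounded on $L^2_r(\DD)$.

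For \emph{part (i)} write $\mathcal{A}_1^0=-\Delta+\mathcal{K}_1$ with $\mathcal{K}_1\varphi=-\sum_\pm\int\mu_e^\pm(1-\mathcal{P}^\pm)\varphi\,dv$. The estimate above shows $\mathcal{K}_1$ is bounded on $L^2_r(\DD)$, and since $1-\mathcal{P}^\pm$ is a self-adjoint idempotent on $\cH^\pm$ while $\mu_e^\pm<0$, one finds $(\mathcal{K}_1\varphi,\tilde\varphi)_{L^2_r}=\sum_\pm\langle(1-\mathcal{P}^\pm)\varphi,(1-\mathcal{P}^\pm)\tilde\varphi\rangle_{\cH^\pm}$; hence $\mathcal{K}_1$ is symmetric and $\mathcal{K}_1\ge0$. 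The principal part $-\Delta$, taken with the Neumann condition on the zero-average radial functions, is the Neumann Laplacian reduced to the closed subspace $L^2_{r,0}(\DD):=\{g\in L^2_r(\DD):\int_\DD g\,dx=0\}$; elliptic regularity on the smooth domain $\DD$ identifies its domain as precisely $\mathcal{V}$, and there it is self-adjoint and positive definite, with range all of $L^2_{r,0}(\DD)$. Since $\oD^\pm$ has no zeroth-order term and constants are even in $v_r$, the constant function $1$ lies in $\ker\oD^\pm$, so $\mathcal{P}^\pm1=1$ and $\int_\DD\mathcal{K}_1\varphi\,dx=\sum_\pm\langle(1-\mathcal{P}^\pm)\varphi,1\rangle_{\cH^\pm}=0$; thus $\mathcal{A}_1^0$ maps $\mathcal{V}$ into $L^2_{r,0}(\DD)$, is self-adjoint there with domain $\mathcal{V}$, and, by the Poincaré--Wirtinger inequality, satisfies $(\mathcal{A}_1^0\varphi,\varphi)_{L^2}\ge(-\Delta\varphi,\varphi)_{L^2}\ge c\|\varphi\|_{L^2}^2$. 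A self-adjoint operator bounded below by $c>0$ has $0$ in its resolvent set, whence $\mathcal{A}_1^0:\mathcal{V}\to L^2_{r,0}(\DD)$ is a bijection. Finally, using $\D_{v_\theta}[\mu^\pm(e^\pm,p^\pm)]=\mu_e^\pm\hat v_\theta+r\mu_p^\pm$ together with $\mathcal{P}^\pm1=1$ one gets $\int_\DD\mathcal{B}^0\psi\,dx=\int_\DD\Big(\int\D_{v_\theta}[\mu^++\mu^-]\,dv\Big)\psi\,dx=0$, the inner integral being the $v$-integral of a perfect $v_\theta$-derivative of a decaying function; hence $\mathrm{range}(\mathcal{B}^0)\subseteq L^2_{r,0}(\DD)=\mathrm{range}(\mathcal{A}_1^0)$ and $(\mathcal{A}_1^0)^{-1}$ is well defined on $\mathrm{range}(\mathcal{B}^0)$, bounded into $\mathcal{V}$.

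For \emph{part (iii)} write $\mathcal{A}_2^0=-\Delta_r+\mathcal{K}_2$. Under the unitary map $\psi\mapsto\psi e^{i\theta}$ on radial functions, for which $-\Delta_r\psi=-e^{-i\theta}\Delta(\psi e^{i\theta})$, the principal part with the Dirichlet condition becomes the Dirichlet Laplacian restricted to the reducing subspace $\{\psi(r)e^{i\theta}\}$ of $L^2(\DD)$; this is exactly the point of the $\dagger$-spaces, and it shows $-\Delta_r$ is self-adjoint on $L^2_r(\DD)$ with domain precisely $\mathcal{V}^\dagger$. The perturbation $\mathcal{K}_2\psi=-\int r\hat v_\theta(\mu_p^++\mu_p^-)\,dv\,\psi-\sum_\pm\int\hat v_\theta\mu_e^\pm\mathcal{P}^\pm(\hat v_\theta\psi)\,dv$ is bounded on $L^2_r(\DD)$ by the estimate of part (ii) and is symmetric --- the first term is multiplication by a real function, and for the second one uses again that $\mathcal{P}^\pm$ is self-adjoint on $\cH^\pm$ --- so $\mathcal{A}_2^0$ is self-adjoint with domain $\mathcal{V}^\dagger$. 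Since $(\mathcal{A}_1^0)^{-1}$ is bounded, self-adjoint and positive on $L^2_{r,0}(\DD)$, and $\mathcal{B}^0,(\mathcal{B}^0)^*$ are bounded with $\mathrm{range}(\mathcal{B}^0)\subseteq L^2_{r,0}(\DD)$, the operator $(\mathcal{B}^0)^*(\mathcal{A}_1^0)^{-1}\mathcal{B}^0$ is bounded, self-adjoint and nonnegative on $L^2_r(\DD)$; adding it to $\mathcal{A}_2^0$, the bounded-perturbation fact gives that $\mathcal{L}^0$ is self-adjoint with the same domain $\mathcal{V}^\dagger$.

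The perturbation bookkeeping is routine; the delicate points are two. First, showing that the principal parts are self-adjoint with domains \emph{exactly} $\mathcal{V}$ and $\mathcal{V}^\dagger$ --- above all that $-\Delta_r$, which carries the apparent $1/r^2$ singularity at the origin along with the Dirichlet condition, is self-adjoint on $L^2_r(\DD)$; this is precisely what the $\dagger$-space and the identity $-\Delta_r\psi=-e^{-i\theta}\Delta(\psi e^{i\theta})$ are designed to handle. Second, keeping the entire analysis inside the zero-average subspace $L^2_{r,0}(\DD)$, so that $(\mathcal{A}_1^0)^{-1}$, and therefore $\mathcal{L}^0$, is meaningful; this rests on the two structural identities $1\in\ker\oD^\pm$ and $\int\D_{v_\theta}\mu^\pm\,dv=0$.
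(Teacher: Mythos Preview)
Your proof is correct and follows essentially the same strategy as the paper: decompose each operator into its Laplacian principal part plus a bounded symmetric $v$-integral perturbation, then use that a bounded symmetric perturbation of a self-adjoint operator is self-adjoint with the same domain. The key structural inputs --- $1\in\ker\oD^\pm$, the identity $\D_{v_\theta}\mu^\pm=\mu_e^\pm\hat v_\theta+r\mu_p^\pm$, and self-adjointness of $\mathcal{P}^\pm$ on $\cH^\pm$ --- are exactly those the paper uses.

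The only noteworthy difference is in the invertibility argument for $\mathcal{A}_1^0$: the paper notes that the integral perturbations are relatively compact (not merely bounded), infers discrete spectrum, identifies the kernel as the constants, and concludes invertibility on $1^\perp$; you instead restrict from the outset to the zero-average subspace $L^2_{r,0}$, observe that $\mathcal{K}_1$ maps into it, and use Poincar\'e--Wirtinger to get $(\mathcal{A}_1^0\varphi,\varphi)\ge c\|\varphi\|^2$ directly. Both are standard; your route is slightly cleaner since it avoids the spectral detour, while the paper's route records the discrete-spectrum fact which is independently useful elsewhere. You are also more explicit than the paper about why $-\Delta_r$ with Dirichlet data is self-adjoint with domain exactly $\mathcal{V}^\dagger$, via the $e^{i\theta}$ conjugation, which the paper takes for granted.
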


\begin{proof} First, since the projections $\mathcal{P}^\pm$ preserve the radial symmetry, the operators $\mathcal{A}_j^0$ and $\mathcal{B}^0$ also preserve the symmetry. Next we observe that all the integral terms in \eqref{operators-0re} are bounded operators in $L^2_r(\DD )$. For example, we have 
$$
\Big| \int_\DD  \int \mu_e^+\psi \mathcal{P}^+\varphi \; dvdx\Big| \le  \sup_\DD  \Big(\int |\mu_e^+|\; dv \Big) \|\psi\|_{L^2} \|\mathcal{P}^+\varphi\|_{L^2}\le C_0  \|\psi\|_{L^2} \|\varphi\|_{L^2},$$ 
for some constant $C_0$ that depends on the decay assumption \eqref{mu-cond} on $\mu_e^+$.  
The other integrals are similar since $\hat v_\theta$ is bounded by one.  
This proves (ii) and also proves that the integral terms in $\mathcal{A}_1^0$ and $\mathcal{A}_2^0$ 
are relatively compact with respect to $-\Delta$ and $-\Delta_r$, 
which have domains $\mathcal V$ and $\mathcal V^\dagger$, respectively. 
Thus, $\mathcal{A}_1^0$ and $\mathcal{A}_2^0$ are well-defined operators on $L^2_r(\DD )$ 
with  domains $\mathcal{V}$ and $\mathcal{V}^\dagger$, respectively. \bigskip

Since $\mathcal{P}^\pm$ are self-adjoint on $\cH$, it is clear that all three operators 
$\mathcal{A}_1^0,\mathcal{A}_2^0,$ and $\mathcal{L}^0$ are self-adjoint  on $L^2_r(\DD )$. 
Note that the  function 1 belongs to the kernel of $\oD ^\pm$.
To prove the positivity of $\mathcal{A}_1^0$, we use the orthogonality of $\mathcal{P}^\pm$ and $1-\mathcal{P}^\pm$ 
in $\cH$ to write 
$$
 -\int_\DD  \int \mu^\pm_e\varphi (1-\mathcal{P}^\pm) \varphi \; dvdx 
 =  -\int_\DD  \int \mu^\pm_e |(1-\mathcal{P}^\pm) \varphi|^2 \; dvdx \ge0$$ 
since $\mu_e^\pm <0$.   
Thus $\mathcal{A}_1^0$ is a nonnegative operator,  and $\mathcal{A}_1^0\varphi =0$ if and only if $\varphi$ is a constant.  
Since $\mathcal{A}_1^0$ has discrete spectrum, it is invertible on the orthogonal set to its kernel,  
that is,  on $\{g \in \mathcal{V}\ | \  \int_\DD g \; dx =0\}$.  
In order to prove the invertibility of $\mathcal{A}_1^0$ on the range of $\mathcal{B}^0$, 
we note that by the self-adjointness 
of $\mathcal{P}^\pm$ in $\cH$, we have  
$$\int_\DD\int \mu_e^\pm\mathcal{P}^\pm(\hat v_\theta \psi) \; dvdx 
= \int_\DD\int \mu_e^\pm \hat v_\theta \psi \mathcal{P}^\pm(1) \; dvdx= \int_\DD\int \mu_e^\pm \hat v_\theta \psi \; dvdx.$$
Thus 
$$\int_\DD \mathcal{B}^0 \psi \; dx = \sum_\pm \int_\DD\int ( r \mu_p^\pm + \mu_e^\pm \hat v_\theta )\psi \; dvdx ,$$
which is identically zero by using the fact that $\partial_{v_\theta}[ \mu^\pm ] = \hat v_\theta \mu_e^\pm + r \mu_p^\pm$. 
That is, $\mathcal{B}^0\psi$ has zero average, and so $(\mathcal{A}^0_1)^{-1}$ is well-defined on the range of $\mathcal{B}^0$.   
\end{proof}

\subsection{Invariants} \label{inv} 
First we consider the linearized energy.  
\begin{lemma} \label{invariance} 
Suppose that  $(f^\pm, \varphi,\psi)$ is a solution of our linearized system  \eqref{lin-VM}, \eqref{BC-Maxwell}, and  
\eqref{bdry-specular} such that 
$f^\pm\in C^1(\RR, L^2_{1/|\mu_e|}(\DD\times\RR^2))$ and $\varphi, \psi \in C^1(\RR; H^2(\DD))$.  
Then the linearized energy functional 
$$\begin{aligned}&\mathcal{I}(f^\pm,\varphi, \psi ) \\
&:= \sum_\pm \int _\DD  \int \Big[ \frac {1}{|\mu^\pm_e|} |f^\pm \mp r \mu^\pm_p  \psi |^2 
- r \mu^\pm_p \hat v_\theta | \psi |^2 \Big] \; dvdx 
+  \int_\DD \Big[ |\dt \psi|^2 + |\nabla\varphi|^2 + \frac 1{r^2}| \D_r (r\psi)|^2 \Big] \; dx\end{aligned}$$
is independent of time.  
\end{lemma}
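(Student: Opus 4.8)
The plan is to differentiate $\mathcal{I}$ in time and show that the derivative vanishes, using the linearized Vlasov equations \eqref{lin-VM}, the Maxwell equations \eqref{Maxwell-eqs}, and the boundary conditions. The regularity hypotheses ($f^\pm\in C^1(\RR,L^2_{1/|\mu_e|})$, $\varphi,\psi\in C^1(\RR,H^2)$) are exactly what is needed to justify differentiation under the integral sign and all the integrations by parts. First I would split $\mathcal{I}$ into the ``kinetic'' part $\sum_\pm\int_\DD\int[\tfrac{1}{|\mu_e^\pm|}|f^\pm\mp r\mu_p^\pm\psi|^2 - r\mu_p^\pm\hat v_\theta|\psi|^2]\,dvdx$ and the ``field'' part $\int_\DD[|\dt\psi|^2+|\nabla\varphi|^2+\tfrac{1}{r^2}|\D_r(r\psi)|^2]\,dx$, and handle them separately.

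For the kinetic part, the crucial algebraic observation is the one already flagged in the formal argument: \eqref{lin-VM} can be rewritten in the ``conserved form'' $\dt(f^\pm\mp\mu_e^\pm\hat v_\theta\psi\mp r\mu_p^\pm\psi)+\oD^\pm(f^\pm\mp\mu_e^\pm\varphi\mp r\mu_p^\pm\psi)=0$. Set $g^\pm := f^\pm\mp r\mu_p^\pm\psi$ (the quantity appearing in $\mathcal{I}$) so that the conserved quantity is $g^\pm\mp\mu_e^\pm\hat v_\theta\psi$ and the $\oD^\pm$-argument is $g^\pm\mp\mu_e^\pm\varphi$. Then
$$
\dt\!\int\frac{|g^\pm|^2}{|\mu_e^\pm|}\,dv = 2\int\frac{g^\pm}{|\mu_e^\pm|}\,\dt g^\pm\,dv = -2\sgn(\mu_e^\pm)\!\int\frac{g^\pm}{|\mu_e^\pm|}\Big[\pm\mu_e^\pm\hat v_\theta\dt\psi \;-\;\oD^\pm\!\big(g^\pm\mp\mu_e^\pm\varphi\big)\Big]dv,
$$
wait---more cleanly, use $\dt g^\pm = \pm\mu_e^\pm\hat v_\theta\dt\psi - \oD^\pm(g^\pm\mp\mu_e^\pm\varphi)$. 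The term $\oD^\pm(\mu_e^\pm\varphi)$ pairs against $g^\pm/|\mu_e^\pm|$; since $1/|\mu_e^\pm|$ times $\mu_e^\pm$ is $\mp 1$ and $\oD^\pm\varphi=\hat v_r\D_r\varphi$ for radial $\varphi$, this produces terms that I expect to recombine, after integrating $\oD^\pm$ by parts and picking up the specular boundary term (which vanishes because $g^\pm$ is even in $v_r$ at $r=1$ by the specular condition on $f^\pm$ and evenness of $\mu^\pm$), into current-density expressions $j_r,j_\theta$. Similarly the $\dt\psi$ term and the $-r\mu_p^\pm\hat v_\theta|\psi|^2$ term in $\mathcal{I}$ differentiate to give $\pm\mu_e^\pm\hat v_\theta$ and $-r\mu_p^\pm\hat v_\theta$ contributions. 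Summing over $\pm$ and integrating $dv$ should collapse everything to $\int_\DD[\,2\dt\varphi\,\D_r\varphi\cdot(\cdots) + 2\,j_r\,\D_r\varphi + 2\,j_\theta\,\dt\psi\,]\,dx$-type terms, i.e. purely field quantities sourced by $\rho,j_r,j_\theta$.

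For the field part, I would differentiate directly: $\dt\int|\nabla\varphi|^2 = 2\int\nabla\varphi\cdot\nabla\dt\varphi = -2\int\Delta\varphi\,\dt\varphi + \text{bdry}$, and the Neumann condition $\D_r\varphi(t,1)=0$ kills the boundary term, leaving $2\int\rho\,\dt\varphi\,dx$; differentiating $\int\tfrac{1}{r^2}|\D_r(r\psi)|^2 = \int|B|^2$ and using $\dt B = -\D_r E_\theta - \tfrac1r E_\theta$... more simply, $\dt\int[|\dt\psi|^2 + \tfrac1{r^2}|\D_r(r\psi)|^2]$ via the wave equation $(\dt^2-\Delta_r)\psi = j_\theta$ gives $2\int j_\theta\,\dt\psi\,dx$ after an integration by parts in which the Dirichlet condition $\psi(t,1)=0$ (hence $\dt\psi(t,1)=0$) annihilates the boundary term. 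Using $\dt\D_r\varphi = j_r$ (the second Maxwell equation) converts $\int\rho\,\dt\varphi$ into a form matching the kinetic side. The whole argument then closes by checking that the kinetic contribution and the field contribution are exact negatives.

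The main obstacle I anticipate is bookkeeping the boundary terms and the $v$-integrations carefully enough to see the cancellation---in particular verifying that every integration by parts in $x$ (for $\oD^\pm$, for $\Delta$, for $\Delta_r$) produces a boundary term that is killed by the appropriate boundary condition (specular evenness in $v_r$, Neumann for $\varphi$, Dirichlet for $\psi$), and that the ``apparent singularity'' $1/r^2$ at the origin causes no trouble (handled by the $\dagger$-space identity $-\Delta_r\psi = -e^{-i\theta}\Delta(\psi e^{i\theta})$). A secondary subtlety is that one must integrate by parts using $\oD^\pm$ against functions like $g^\pm/|\mu_e^\pm|$ and $\varphi$ that need not lie in $\mathrm{dom}(\oD^\pm)$; this is why the hypothesis is stated with the strong weighted-$C^1$-in-time regularity, so that Lemma \ref{lem-on-D}'s integration-by-parts identity applies directly to the $C^1$ representatives. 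I would keep careful track of the signs coming from $\sgn(\mu_e^\pm)=-1$ throughout, since a single sign error destroys the cancellation.
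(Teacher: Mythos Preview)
Your proposal is correct and follows essentially the same route as the paper: split $\mathcal{I}$ into kinetic and field parts, differentiate, use the Vlasov and Maxwell equations together with the boundary conditions to see that the two contributions cancel as $\pm\int_\DD \vE\cdot\vj\,dx$. The only cosmetic differences are that the paper substitutes the Vlasov equation \eqref{lin-VM} directly for $\partial_t f^\pm$ and tracks nine explicit terms (rather than packaging via the conserved form and $g^\pm$), and treats the field part in vector form $\int(|\vE|^2+|\vB|^2)$ using $\nabla\cdot(\vE\times\vB)$ and the condition $\vE\times n=0$ (rather than your scalar piece-by-piece computation with Neumann/Dirichlet); both organizations lead to the same cancellations.
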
 

\begin{proof} 
For convenience of calculation, we denote the three terms as 
$$\begin{aligned}\mathcal{I}^\pm_V(f^\pm, \psi ) 
&:=  \int _\DD  \int \Big[ \frac {1}{|\mu^\pm_e|} |f^\pm \mp r \mu^\pm_p  \psi |^2  
- r \mu^\pm_p \hat v_\theta | \psi |^2 \Big] \; dvdx \\
\mathcal{I}_M(\varphi, \psi )&:=   \int_\DD  \Big[ |\nabla \varphi|^2 + |\D_t\psi |^2 +  \frac 1{r^2}| \D_r (r\psi)|^2 \Big] \; dx, 
\end{aligned}$$
so that 
 $$
 \mathcal{I}(f^\pm,\varphi, \psi )= \mathcal{I}^+_V(f^+, \psi )+\mathcal{I}^-_V(f^-, \psi )+ \mathcal{I}_M(\varphi, \psi ).$$
Now  
taking the time derivative of $\mathcal{I}^+_V(f^+, \psi )$ and then using the linearized Vlasov equation 
\eqref{lin-VM} for $f^+$, we get 
$$\begin{aligned}
&\frac 12\frac {d}{dt}\mathcal{I}^+_V(f^+, \psi ) \\&  
= \int _\DD  \int \left[ \frac {1}{|\mu^+_e|} (f^+ - r \mu^+_p  \psi )(\dt f^+ - r \mu^+_p \dt  \psi )  
- r \mu^+_p \hat v_\theta  \psi  \dt  \psi  \right] \; dvdx 
\\
 &= \int _\DD  \int \left[ \frac {1}{|\mu^+_e|} (f^+ - r \mu^+_p  \psi )
 \Big\{-\oD ^+ f^+ +\mu^+_e\oD ^+ \varphi  + \mu^+_p\oD ^+(r \psi )  +  \mu^+_e \hat v_\theta \dt  \psi  \Big\} 
 - r \mu^+_p \hat v_\theta  \psi  \dt  \psi  \right] \; dvdx 
\\
 &= \int _\DD  \int \frac {1}{|\mu^+_e|}   \Big [ - f^+\oD ^+f^+ 
 + \mu^+_e f^+\oD ^+\varphi + \mu^+_p f^+\oD ^+(r \psi ) + \mu^+_e  f^+ \hat v_\theta  \dt  \psi  + r \mu^+_p  \psi \oD ^+ f^+ \\ 
 &\qquad\qquad - r \mu^+_p \mu^+_e  \psi \oD ^+\varphi -( \mu^+_p)^2 r \psi \oD ^+(r \psi )    \Big ] \; dvdx . 
\end{aligned}$$
Among the nine terms, we have used the fact that two terms with $\partial_t\psi$ exactly cancel because $\mu_e<0$.  
Some of the remaining seven terms cancel, as follows.  First, we observe that the sixth term  vanishes, 
upon writing  $ \oD ^+\varphi = \hat v_r \D_r \varphi $ and noting that $\mu$ is even in $v_r$. 
Next, by using the skew--symmetric property \eqref{skewadj} of $\oD ^+$, 
which is the weak form of the specular boundary condition, the first and seventh terms are also zero.  
Now the third and fifth terms can be combined to get 
  $\mu^+_p\oD ^+(r \psi  f^+) $, whose integral again vanishes due to the boundary condition $\psi=0$. 
We have used the fact  that both $\mu_e^+$ and $\mu_p^+$ belong to the kernel of $\oD ^+$.   
Only the second and fourth terms survive, so we  have 
$$\begin{aligned}
\frac 12&\frac {d}{dt}\mathcal{I}^+_V(f^+, \psi ) 
=\int _\DD  \int \Big[- f^+ \oD ^+\varphi - f^+ \hat v_\theta  \dt  \psi \Big] \; dvdx = \int _\DD  \int \vE \cdot \hat v\ f^+\; dvdx,
\end{aligned}$$
in which we have used the fact that $\oD ^+\varphi = \hat v_r \D_r \varphi$ together with the definition $\vE = -\D_r \varphi e_r- \D_t \psi e_\theta $. Entirely similar calculations hold for $f^-$. We therefore obtain 
\begin{equation}\label{est-IV}\begin{aligned}
\frac 12\frac {d}{dt}\Big(\mathcal{I}^+_V(f^+, \psi ) + \mathcal{I}^-_V(f^-, \psi )\Big)&=\int _\DD  \int \vE \cdot \hat v ( f^+ - f^-)\; dvdx = \int_\DD  \vE \cdot \vj \; dx.
\end{aligned}\end{equation}

Next, by \eqref{def-potentials} we may write 
$\mathcal{I}_M(\varphi, \psi )=   \int_\DD  \Big[ |\vE|^2 + |\vB|^2  \Big] \; dx.$
Taking its time derivative
and using the Maxwell equations, we obtain
$$\begin{aligned}
\frac12\frac {d}{dt}\mathcal{I}_M(\varphi, \psi )&= \int_\DD  \Big[ \vE \cdot \D_t \vE + \vB \cdot \D_t \vB \Big] \; dx
\\
&= \int_\DD  \Big[ - \vE \cdot \vj + \vE \cdot (\nabla \times \vB) - \vB \cdot (\nabla \times \vE) \Big] \; dx.
\\
&= -\int_\DD   \vE \cdot \vj \; dx + \int_{\D\DD} (\vE \times \vB) \cdot n(x) \; dS_x 
= -\int_\DD   \vE \cdot \vj \; dx 
\end{aligned}$$
Here we have  used the fact that $(\vE \times \vB) \cdot n = (\vE \times n) \cdot \vB $, which vanishes  
due to the perfect conductor boundary condition $\vE\times n =0$. 
Together with \eqref{est-IV}, this yields invariance of $\mathcal{I}(f^\pm,\varphi, \psi )$. 
\end{proof}
\bigskip

Furthermore, we also obtain the following. 
\begin{lemma} \label{lem-invK} 
Suppose that  $(f^\pm, \varphi,\psi)$ is a solution of our linearized system  \eqref{lin-VM}, \eqref{BC-Maxwell}, and  
\eqref{bdry-specular} such that 
$f^\pm\in C^1(\RR, L^2_{1/|\mu_e|}(\DD\times\RR^2))$ and $\varphi, \psi \in C^1(\RR; H^2(\DD))$.  
Then the functional 
\begin{equation}\label{inv-K}  
\mathcal{K}^\pm_g(f^\pm, \psi )
 = \int_\DD  \int \Big( f^\pm \mp \mu^\pm_e \hat v_\theta  \psi  \mp r \mu^\pm_p  \psi \Big) g \; dvdx\end{equation} 
 are independent in time, for all $g \in \ker\oD^\pm $ and for both + and $-$.  In particular, for $g=1$ in \eqref{inv-K}, the integrals 
$ \int_\DD  \int_{\RR^2} f^\pm (t,x,v)\; dvdx $ are time-invariant; that is, the total masses are conserved. 
\end{lemma}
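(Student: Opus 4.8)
The plan is to follow the pattern of the proof of Lemma~\ref{invariance}, using the reformulated linearized equation \eqref{conserved-formVM} and the skew-adjointness of $\oD^\pm$ on $\cH$. Write $F^\pm := f^\pm \mp \mu^\pm_e \hat v_\theta \psi \mp r\mu^\pm_p \psi$ for the integrand of $\mathcal{K}^\pm_g$. Since $\mu^\pm_e = -|\mu^\pm_e|$, one has $\mathcal{K}^\pm_g(f^\pm,\psi) = \langle F^\pm/|\mu^\pm_e|,\, g\rangle_\cH$, the inner product of $\cH = L^2_{|\mu^\pm_e|}(\DD\times\RR^2)$. Because the weight $|\mu^\pm_e| = |\mu^\pm_e(e^\pm,p^\pm)|$ is a function of the invariants $e^\pm,p^\pm$, it lies in $\ker\oD^\pm$; in particular multiplication by $1/|\mu^\pm_e|$ is an isometry of $L^2_{1/|\mu^\pm_e|}$ onto $\cH$ that commutes with $\oD^\pm$. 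Using the regularity hypotheses together with \eqref{mu-cond} (which makes $\psi\mapsto\hat v_\theta\psi$ and $\psi \mapsto r\mu^\pm_p\psi/|\mu^\pm_e|$ bounded maps from $H^2(\DD)$ into $\cH$, using $r\le 1$ on $\DD$ and $\int (1+|e|^\gamma)^{-1}\,dv<\infty$), the curve $t\mapsto F^\pm(t)/|\mu^\pm_e|$ is $C^1$ into $\cH$, so $\mathcal{K}^\pm_g$ is differentiable in $t$ with $\frac{d}{dt}\mathcal{K}^\pm_g = \langle (\dt F^\pm)/|\mu^\pm_e|,\, g\rangle_\cH$.

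Next I would invoke the identity \eqref{conserved-formVM} (stated there for $f^+$, with the obvious analogue for $f^-$), which for each sign reads $\dt F^\pm + \oD^\pm G^\pm = 0$ with $G^\pm := f^\pm \mp \mu^\pm_e\varphi \mp r\mu^\pm_p\psi$; this follows by dividing the linearized equations \eqref{lin-VM} and using $\mu^\pm_e, \mu^\pm_p \in \ker\oD^\pm$ to rewrite $\mu^\pm_e\oD^\pm\varphi = \oD^\pm(\mu^\pm_e\varphi)$ and $\mu^\pm_p\oD^\pm(r\psi) = \oD^\pm(\mu^\pm_p r\psi)$. Dividing once more by $|\mu^\pm_e|$ and again using $\oD^\pm|\mu^\pm_e| = 0$ gives $\dt(F^\pm/|\mu^\pm_e|) = -\oD^\pm(G^\pm/|\mu^\pm_e|)$, so $\frac{d}{dt}\mathcal{K}^\pm_g = -\langle \oD^\pm(G^\pm/|\mu^\pm_e|),\, g\rangle_\cH$. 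Since $g\in\ker\oD^\pm \subset \text{dom}(\oD^\pm)$, the skew-adjointness \eqref{skewadj} of $\oD^\pm$ on $\cH$ yields $\langle \oD^\pm(G^\pm/|\mu^\pm_e|),\, g\rangle_\cH = -\langle G^\pm/|\mu^\pm_e|,\, \oD^\pm g\rangle_\cH = 0$, whence $\frac{d}{dt}\mathcal{K}^\pm_g = 0$. For the final assertion, the constant function $1$ lies in $\ker\oD^\pm$, so $\mathcal{K}^\pm_1 = \iint_{\DD\times\RR^2} f^\pm\, dvdx \mp \int_\DD \psi(x)\Big(\int_{\RR^2}(\mu^\pm_e\hat v_\theta + r\mu^\pm_p)\,dv\Big)\,dx$; the inner $v$-integral vanishes because $\mu^\pm_e\hat v_\theta + r\mu^\pm_p = \D_{v_\theta}[\mu^\pm(e^\pm,p^\pm)]$ is a perfect $v_\theta$-derivative of a function decaying in $v$, leaving the total mass $\iint f^\pm\,dvdx$, which is therefore conserved.

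The one point requiring genuine care — and essentially the only work in the proof — is to verify that $G^\pm/|\mu^\pm_e|$ really belongs to $\text{dom}(\oD^\pm)$, so that the skew-adjointness step is legitimate, and similarly that $\oD^\pm f^\pm \in \cH$. Writing $G^\pm/|\mu^\pm_e| = f^\pm/|\mu^\pm_e| \pm \varphi \mp (r\mu^\pm_p/|\mu^\pm_e|)\psi$ and treating each summand: $f^\pm/|\mu^\pm_e| \in \cH$ by hypothesis, it satisfies the specular condition weakly because $f^\pm$ does and $|\mu^\pm_e|$ is even in $v_r$, and $\oD^\pm(f^\pm/|\mu^\pm_e|) = (\oD^\pm f^\pm)/|\mu^\pm_e| \in \cH$ because, after dividing the right-hand side of \eqref{lin-VM} by $|\mu^\pm_e|$, the bounds \eqref{mu-cond} (in particular $|\mu^\pm_p|^2/|\mu^\pm_e| \le C_\mu(1+|e|^\gamma)^{-1}$ with $\gamma>2$) together with $\varphi,\psi\in H^2(\DD)$ render every term square-integrable in $\cH$; and $\varphi$ and $(r\mu^\pm_p/|\mu^\pm_e|)\psi$ lie in $\text{dom}(\oD^\pm)$ since they are $v$-independent, respectively even in $v_r$, with the same weight estimates applying to their $\oD^\pm$-images. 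This is exactly the bookkeeping already performed in the proof of Lemma~\ref{invariance}, now with $g$ (and the constant $1$) in place of an arbitrary test function from the kernel; the interchange of $\frac{d}{dt}$ with the integral is justified by the assumed $C^1$ regularity of $f^\pm$ in $L^2_{1/|\mu^\pm_e|}$ and of $\varphi,\psi$ in $H^2(\DD)$.
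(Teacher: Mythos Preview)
Your proof is correct and follows essentially the same route as the paper: differentiate $\mathcal{K}^\pm_g$, invoke the conserved-form identity \eqref{conserved-formVM}, and then use the skew-symmetry of $\oD^\pm$ together with $g\in\ker\oD^\pm$ to kill the derivative; the mass conservation follows from $\mu^\pm_e\hat v_\theta + r\mu^\pm_p = \D_{v_\theta}\mu^\pm$ exactly as you say. The only cosmetic difference is that the paper carries out the integration by parts directly in the unweighted pairing $\iint(\cdot)g\,dvdx$ (appealing to the specular condition on $f^\pm,g$ and the evenness of $\mu^\pm$ in $v_r$), whereas you first divide through by $|\mu^\pm_e|$ so as to work in $\cH$ and quote \eqref{skewadj} verbatim --- your extra bookkeeping on domain membership is more than the paper spells out, but not a different argument.
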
 
\begin{proof} Parenthetically, we remark that such invariants $\mathcal{K}^\pm_g(f^\pm, \psi )$ can easily be discovered by writing the Vlasov equations 
as in  \eqref{conserved-formVM}.   
 Indeed, writing the Vlasov equations in the form \eqref{conserved-formVM} and 
 using the skew-symmetry property of $\oD ^\pm$ as in 
 \eqref{skewadj}, we have 
$$\begin{aligned}
\frac{d}{dt}\mathcal{K}^\pm_g(f^\pm, \psi ) &= \int_\DD  \int \D_t\Big( f^\pm \mp \mu^\pm_e \hat v_\theta  \psi  \mp r \mu^\pm_p  \psi \Big) g \; dvdx   \\
&= \int_\DD  \int \oD ^\pm  \Big( -f^\pm  \pm \mu^\pm_e\varphi \pm r\mu^\pm_p \psi \Big) g \; dvdx   \\
&= -  \int_\DD\int  \Big( -f^\pm \pm \mu^\pm_e \varphi \pm r \mu^\pm_p  \psi \Big) \oD^\pm g\; dvdx = 0 
\end{aligned}
$$
due to the specular conditions on $f^\pm$ and $g$ and the evenness in $v_r$ of $\mu^\pm$. Now, if we take $g=1$ in \eqref{inv-K} and note that 
$\D_{v_\theta}\mu^\pm = \mu^\pm_e \hat v_\theta + r \mu^\pm_p$, the integrals $ \int_\DD  \int_{\RR^2} f^\pm (t,x,v)\; dvdx $ are therefore time-invariant. 
\end{proof}

\subsection{Minimization} \label{ss-min}
In this subsection we prove an identity that will be fundamental to the proof of stability.  
It involves the functional 
$$\begin{aligned}\mathcal{J}_ {\psi }(f^+,f^-) & 
:= \int _\DD  \int \frac {1}{|\mu^+_e|} |f^+ - r \mu^+_p  \psi |^2 \; dvdx + 
\int _\DD  \int \frac {1}{|\mu^-_e|} |f^- + r \mu^-_p  \psi |^2 \; dvdx +  \int_\DD  |\nabla \varphi|^2 \; dx,  
\end{aligned}$$
where $\varphi = \varphi(r)\in \mathcal{V}$ satisfies the Poisson equation 
\begin{equation}\label{f-to-phi}  
-\Delta \varphi = \int (f^+ - f^-)\; dv, \qquad \int_\DD \varphi\; dx =0, \qquad \D_r\varphi (1)=0.
\end{equation} 
For each fixed $ \psi \in L^2_r(\DD )$, let $\mathcal{F}_{\psi }$ be the  space consisting of all pairs of measurable functions $(f^+,f^-)$ depending on  $(r,v)$ 
which satisfy the  constraints 
\begin{equation}\label{eqs-constraint-f} 
\int_\DD  \int \frac{1}{|\mu^\pm_e|} |f^\pm|^2 \; dvdx <+\infty,\end{equation}
 and
\begin{equation}\label{eqs-constraint} 
\int_\DD  \int \Big( f^+ -  \mu^+_e \hat v_\theta \psi  - r \mu^+_p  \psi \Big)\ g^+ \; dvdx = 0,   
\qquad \int_\DD  \int \Big( f^- + \mu^-_e \hat v_\theta \psi  + r \mu^-_p  \psi \Big)\ g^- \; dvdx = 0 ,
\end{equation}
for all $g^\pm \in \ker\oD ^\pm.$ 
Similarly,  let $\mathcal{F}_0$ be the space of pairs $(f^+,f^-)$ satisfying \eqref{eqs-constraint-f} and 
\begin{equation}\label{eqs-constraint-0} 
\int_\DD  \int  f^+ g^+ \; dvdx = 0,\qquad \int_\DD  \int  f^- g^- \; dvdx = 0, \qquad \forall ~g^\pm \in \ker\oD ^\pm.
\end{equation}
Note that the constraints in \eqref{eqs-constraint} with $g=1$ imply that for such a pair of functions $f^\pm$, there is a unique solution $\varphi\in \mathcal{V}$ of the Poisson problem \eqref{f-to-phi}.  
In particular, $\varphi$ is radially symmetric since $f^\pm$ are radially symmetric. 
Thus the functional $\mathcal{J}_{\psi }$ is well-defined and nonnegative on $\mathcal{F}_{\psi }$, and 
 its infimum over $\mathcal{F}_{\psi }$ is finite.   
 We next show that it indeed admits a minimizer on $\mathcal{F}_{\psi }$. 

\begin{lemma}\label{lem-estJ} 
For each fixed $ \psi \in L^2_r(\DD )$, there exists a pair of functions $f^\pm_*$ that 
minimizes the functional $\mathcal{J}_{\psi }$ 
on $\mathcal{F}_{\psi }$.   
Furthermore, 
\begin{equation}\label{J-calculation} 
\begin{aligned}&\mathcal{J}_{\psi } (f^+_*,f^-_*)  
= ( (\mathcal{B}^0)^* (\mathcal{A}_1^0)^{-1} \mathcal{B}^0  \psi , \psi  )_{L^2} 
-  \int_\DD  \int \mu^+_e  |\mathcal{P}^+(\hat v_\theta\psi )|^2 \; dvdx 
 - \int_\DD  \int \mu^-_e  |\mathcal{P}^-(\hat v_\theta \psi )|^2 \; dvdx   . \end{aligned}   
\end{equation}

\end{lemma}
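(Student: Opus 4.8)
The plan is to solve the constrained minimization problem by a Lagrange-multiplier (orthogonal-projection) argument and then read off the value of the minimum explicitly. First I would establish \emph{existence} of a minimizer. The functional $\mathcal{J}_\psi$ is a sum of three nonnegative quadratic terms; the first two are just $\|\cdot\|_\cH^2$ of the shifted densities $f^\pm\mp r\mu^\pm_p\psi$, and the third, $\int_\DD|\nabla\varphi|^2\,dx$, is continuous in $(f^+,f^-)$ via the bounded solution map of the Poisson problem \eqref{f-to-phi} (the constraint \eqref{eqs-constraint} with $g=1$ guarantees the compatibility condition $\int\rho\,dx=0$, so $\varphi\in\mathcal V$ is well-defined). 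The constraint set $\mathcal{F}_\psi$ is a closed affine subspace of $\cH^+\times\cH^-$: closed because each constraint in \eqref{eqs-constraint} is continuity-preserved against a fixed $g^\pm\in\ker\oD^\pm$, affine because the inhomogeneous term $\mu^\pm_e\hat v_\theta\psi+r\mu^\pm_p\psi$ is a fixed element of $\cH^\pm$ (using the decay \eqref{mu-cond}). Since $\mathcal{J}_\psi$ is coercive and weakly lower semicontinuous (indeed strictly convex in $f^\pm$ once $\varphi$ is substituted), a minimizer $f^\pm_*$ exists, e.g. by the direct method on a minimizing sequence, which is bounded in $\cH^\pm$ by coercivity, hence has a weakly convergent subsequence whose limit lies in $\mathcal{F}_\psi$ and attains the infimum.

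Next I would derive the \emph{Euler--Lagrange characterization}. Writing $\mathcal{F}_\psi=(f^{\sharp,+},f^{\sharp,-})+\mathcal{F}_0$ where $f^{\sharp,\pm}=\pm(\mu^\pm_e\hat v_\theta\psi+r\mu^\pm_p\psi)$ is any fixed element of $\mathcal{F}_\psi$ and $\mathcal{F}_0$ is the linear space \eqref{eqs-constraint-0}, the first variation $\delta\mathcal{J}_\psi=0$ in directions $(\delta f^+,\delta f^-)\in\mathcal{F}_0$ gives, after integrating by parts in the $\int|\nabla\varphi|^2$ term against the perturbation $\delta\varphi$ solving $-\Delta\,\delta\varphi=\int(\delta f^+-\delta f^-)\,dv$,
$$
\Big\langle \tfrac{1}{|\mu^\pm_e|}(f^\pm_*\mp r\mu^\pm_p\psi)\mp\varphi_*,\ \delta f^\pm\Big\rangle_{L^2(dvdx)}=0\qquad\forall\,(\delta f^+,\delta f^-)\in\mathcal{F}_0.
$$
Since $\mathcal{F}_0$ is the orthogonal complement (in the unweighted $L^2(dvdx)$ pairing, equivalently after reweighting) of $\ker\oD^+\oplus\ker\oD^-$, the bracketed quantity must lie in $\ker\oD^\pm$, i.e. equals $\mathcal{P}^\pm$ of something. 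Combined with the constraint \eqref{eqs-constraint} — which says the $\cH$-orthogonal projection of $\frac{1}{|\mu^\pm_e|}f^\pm_*-(\hat v_\theta\psi+\frac{r\mu^\pm_p}{|\mu^\pm_e|}\psi)$ (times $\pm1$, absorbing signs) onto $\ker\oD^\pm$ vanishes — one solves the two conditions simultaneously to obtain the closed form
$$
f^\pm_* = \pm\,\mu^\pm_e(1-\mathcal{P}^\pm)\varphi_* + r\mu^\pm_p\psi + \mu^\pm_e\,\mathcal{P}^\pm(\hat v_\theta\psi),
$$
which is exactly the asymptotic identity \eqref{id-f0} anticipated in the formal argument of subsection \ref{sec-formal}. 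Feeding this back into the Poisson equation \eqref{f-to-phi} for $\varphi_*$ yields precisely $\mathcal{A}_1^0\varphi_*=\mathcal{B}^0\psi$, so $\varphi_*=(\mathcal{A}_1^0)^{-1}\mathcal{B}^0\psi$, legitimate by Lemma \ref{lem-AB0property}(i) since $\mathcal{B}^0\psi$ has zero average.

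Finally I would \emph{evaluate} $\mathcal{J}_\psi(f^+_*,f^-_*)$. Substituting the formula for $f^\pm_*$, the shifted density is $f^\pm_*\mp r\mu^\pm_p\psi=\pm\mu^\pm_e(1-\mathcal{P}^\pm)\varphi_*+\mu^\pm_e\mathcal{P}^\pm(\hat v_\theta\psi)$, and by $\cH$-orthogonality of $\mathcal{P}^\pm$ and $1-\mathcal{P}^\pm$ the weighted norm squared splits as
$$
\int_\DD\!\int\frac{1}{|\mu^\pm_e|}|f^\pm_*\mp r\mu^\pm_p\psi|^2\,dvdx
= -\!\int_\DD\!\int\mu^\pm_e|(1-\mathcal{P}^\pm)\varphi_*|^2\,dvdx
  -\!\int_\DD\!\int\mu^\pm_e|\mathcal{P}^\pm(\hat v_\theta\psi)|^2\,dvdx.
$$
Summing over $\pm$ and adding $\int_\DD|\nabla\varphi_*|^2\,dx=(-\Delta\varphi_*,\varphi_*)_{L^2}$, the first group of terms reassembles into $(\mathcal{A}_1^0\varphi_*,\varphi_*)_{L^2}=(\mathcal{B}^0\psi,\varphi_*)_{L^2}=(\mathcal{B}^0\psi,(\mathcal{A}_1^0)^{-1}\mathcal{B}^0\psi)_{L^2}=((\mathcal{B}^0)^*(\mathcal{A}_1^0)^{-1}\mathcal{B}^0\psi,\psi)_{L^2}$, leaving exactly the right-hand side of \eqref{J-calculation}. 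The main obstacle I anticipate is bookkeeping around the projections and constraints: verifying that the Euler--Lagrange variational identity against $\mathcal{F}_0$ plus the constraint \eqref{eqs-constraint} together pin down $f^\pm_*$ uniquely in the stated closed form (rather than up to an element of $\ker\oD^\pm$), and checking that all the integrals are finite and the integration by parts against $\delta\varphi$ is justified — all of which rely on the decay hypothesis \eqref{mu-cond} and the self-adjointness of $\mathcal{P}^\pm$ on $\cH$.
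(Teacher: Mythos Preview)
Your proposal is correct and follows essentially the same route as the paper: existence by the direct method on a minimizing sequence in $L^2_{1/|\mu_e^\pm|}$, first variation against $\mathcal{F}_0$ (after absorbing $\int|\nabla\varphi|^2$ via the Poisson equation) to force the shifted quantity into $\ker\oD^\pm$, combination with the constraint \eqref{eqs-constraint} to obtain the closed form \eqref{id-f0}, identification $\mathcal{A}_1^0\varphi_*=\mathcal{B}^0\psi$ by substituting into Poisson, and evaluation using the $\cH$-orthogonality of $\mathcal{P}^\pm$ and $1-\mathcal{P}^\pm$. The only slips are in the $\pm$ bookkeeping---your Euler--Lagrange display should carry $\pm\varphi_*$ rather than $\mp\varphi_*$, and in your closed form for $f^\pm_*$ the overall sign $\pm$ should multiply all three terms on the right, not just the first---but these typos are exactly the kind you flagged and they do not affect the final computation since the norm-squared evaluation and the quadratic form $(\mathcal{A}_1^0\varphi_*,\varphi_*)_{L^2}$ are insensitive to them.
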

\begin{proof} 
Take a minimizing sequence $f^\pm_n$ in $\mathcal{F}_{\psi }$ 
such that $\mathcal{J}_{\psi }(f^+_n,f_n^-) $ converges to the infimum of 
$\mathcal{J}_{\psi }$.      Since $\{f^\pm_n\}$ are bounded sequences in $L^2_{1/|\mu_e^\pm|}$,  
the weighted $L^2$ space associated with the constraint \eqref{eqs-constraint-f}, 
there are subsequences with weak limits in $L^2_{1/|\mu_e^\pm|}$, which we denote by $f^\pm_*$. 
It is  clear that $f^\pm_*$ satisfy the constraints \eqref{eqs-constraint}, 
and so they belong to $\mathcal{F}_{\psi }$. That is, $(f^+_*,f_*^-)$ must be a minimizer. 

In order to derive the identity \eqref{J-calculation}, 
let the pair $(f_*,f^-_*)\in \mathcal{F}_{\psi }$ be the minimizer and 
let  $\varphi_*\in H^2_r(\DD)$ be the associated solution of  problem \eqref{f-to-phi} with $f^\pm=f^\pm_*$. 
For each $(f^+,f^-) \in \mathcal{F}_{\psi }$, we denote 
\begin{equation}\label{def-hf} h^+  := f^+ -  \mu^+_e \hat v_\theta \psi  - r\mu^+_p  \psi , \qquad h^-  := f^- +  \mu^-_e \hat v_\theta \psi + r\mu^-_p  \psi  .\end{equation}
In particular, $ h^\pm_* := f^\pm_* \mp  \mu^\pm_e \hat v_\theta  \psi  \mp r\mu^\pm_p  \psi $.  
It is clear that $(f^+,f^-) \in \mathcal{F}_{\psi }$ if and only if $(h^+,h^-) \in \mathcal{F}_0$. 
Since $\D_{v_\theta}[\mu^\pm] = \mu^\pm_e \hat v_\theta + r \mu^\pm_p$, we have
$$ \int (f^+ - f^-)\; dv = \int (h^+ - h^-)\; dv. $$
Thus if we let $\varphi$ be the solution of the Poisson equation \eqref{f-to-phi}, $\varphi$ is independent of the 
change of variables in \eqref{def-hf}.  Consequently, $(f^+_*,f^-_*)$ is a minimizer of $\mathcal{J}_{\psi }$ 
on $\mathcal{F}_{\psi }$ if and only if $(h^+_*,h^-_*)$ is a minimizer of the functional
$$\begin{aligned} 
\mathcal{J}_0(h^+,h^-) = \int _\DD  \int \frac {1}{|\mu^+_e|} | h^+ +\mu^+_e \hat v_\theta \psi |^2 \; dvdx 
+\int _\DD  \int \frac {1}{|\mu^-_e|} | h^- - \mu^-_e \hat v_\theta \psi |^2 \; dvdx +  \int_\DD  |\nabla \varphi|^2 \; dx, 
\end{aligned}$$
on $\mathcal{F}_0$.     By minimization, the first variation is  
\begin{equation}\label{eqs-Lag} \int _\DD  \int \frac {-1}{\mu^+_e} (h^+_* + \mu^+_e \hat v_\theta  \psi ) h^+ \; dvdx 
+ \int _\DD  \int \frac {-1}{\mu^-_e} (h^-_* - \mu^-_e \hat v_\theta  \psi ) h^- \; dvdx 
+ \int_\DD  \nabla \varphi_* \cdot \nabla \varphi \; dx =0,\end{equation}
for all $(h^+,h^-) \in \mathcal{F}_0$ where $\varphi$ solves \eqref{f-to-phi}. 
By the Neumann boundary condition on $\varphi$, we have
 $$
 \int_\DD  \nabla \varphi_* \cdot \nabla \varphi \; dx = - \int_\DD  \varphi_* \Delta \varphi \; dx = \int_\DD  \int \varphi_* (h^+-h^-) \; dvdx.$$
Adding this to the identity \eqref{eqs-Lag}, we obtain
\begin{equation}\label{zero-id} 
\int _\DD  \int \frac {-1}{\mu^+_e} (h^+_* + \mu^+_e \hat v_\theta \psi  - \mu^+_e \varphi_*)\  h^+ \; dvdx 
+  \int _\DD  \int \frac {-1}{\mu^-_e} (h^-_* - \mu^-_e \hat v_\theta \psi + \mu^-_e \varphi_*)\  h^- \; dvdx  =0  
\end{equation}
for all $(h^+,h^-) \in \mathcal{F}_0$. 
In particular, we can take $h^- =0$ in \eqref{zero-id} to get 
$$ 
\int _\DD  \int \frac {-1}{\mu^+_e} (h^+_* + \mu^+_e \hat v_\theta \psi  - \mu^+_e \varphi_*) \ h^+ \; dvdx=0$$
for all $h^+\in L^2_{1/|\mu_e^+|}$ satisfying $\int_\DD  \int  h^+ g^+ \; dvdx = 0,$ for all $g^+ \in \ker\oD ^+.$

We claim that this identity implies $h^+_* + \mu^+_e \hat v_\theta\psi  - \mu^+_e \varphi_* \in \ker\oD ^+$. 
Indeed, let 
$$ k_* = |\mu_e^+|^{-1} (h^+_* + \mu^+_e \hat v_\theta \psi  - \mu^+_e \varphi_*), \quad \ell = |\mu_e^+|^{-1} h^+ .$$
Using the inner product in $\cH = L^2_{|\mu_e^+|}$, we  have 
$$ \langle k_*, \ell \rangle_\cH = 0 \quad \forall \ell\in (\ker \oD^+)^\perp .  $$ 
Because $\oD ^+$ (with the specular condition)  is a skew-adjoint operator on $\cH$, we have 
$k_* \in (\ker \oD^+)^{\perp\perp} = \ker \oD^+$. 
Thus 
$$ \oD^+ \{ f_*^+ - r\mu_p^+\psi - \mu_e^+\varphi_* \} 
=  \oD^+ \{ h_*^+ + \mu_e^+\hat v_\theta\psi - \mu_e^+\varphi_* \} = \mu_e^+ \oD^+ k_*  =  0.  $$
This proves the claim.  
Similarly $\oD^- \{ f_*^- + r\mu_p^-\psi + \mu_e^-\varphi_* \} = 0$.  
Equivalently,  
$$\begin{aligned}
 f^+_* - r \mu^+_p  \psi - \mu^+_e \varphi_*  &= \mathcal{P}^+(f^+_* - r \mu^+_p  \psi - \mu^+_e \varphi_* ),
 \\
 f^-_* + r \mu^-_p  \psi + \mu^-_e \varphi_*  &= \mathcal{P}^-(f^-_* + r \mu^-_p  \psi + \mu^-_e \varphi_* ).
 \end{aligned}$$
On the other hand,  the constraints \eqref{eqs-constraint} can be written as 
$ \mathcal{P}^+(f^+_* - \mu^+_e \hat v_\theta\psi  - r \mu^+_p  \psi ) =0$ 
and $ \mathcal{P}^-(f^-_* + \mu^-_e \hat v_\theta\psi  + r \mu^-_p  \psi ) =0$.   
Combining these identities, we have  
\begin{equation}\label{id-f0}\begin{aligned}
f^+_* -  r\mu^+_p  \psi &=~~\mu^+_e (1-\mathcal{P}^+)\varphi_*  + \mu^+_e \mathcal{P}^+(\hat v_\theta \psi ),
\\
f^-_* +  r\mu^-_p  \psi &= -\mu^-_e (1-\mathcal{P}^-)\varphi_* - \mu^-_e \mathcal{P}^-(\hat v_\theta \psi ).
\end{aligned}\end{equation}
Thus, using the orthogonality of $\mathcal{P}^\pm$ and $(1-\mathcal{P}^\pm)$ in $\cH$, we compute 
 $$\begin{aligned} 
 \int _\DD  \int \frac {-1}{\mu^\pm_e} |f^+_* \mp r \mu^\pm_p  \psi |^2 \; dvdx 
&=  -\int _\DD  \int \mu^\pm_e|(1-\mathcal{P}^\pm)\varphi_*|^2\; dvdx 
- \int_\DD  \int \mu^\pm_e  |\mathcal{P}^\pm(\hat v_\theta \psi )|^2 \; dvdx 
\\
&=  -\int _\DD  \int \mu^\pm_e \varphi_* (1-\mathcal{P}^\pm)\varphi_*\; dvdx 
- \int_\DD  \int \mu^\pm_e  |\mathcal{P}^\pm(\hat v_\theta \psi )|^2 \; dvdx  .
 \end{aligned}$$
Inserting these identities 
into the definition of $\mathcal{J}_{\psi }(f^+_*,f^-_*)$, together with the fact from the boundary conditions that 
$ \int_\DD  |\nabla \varphi_*|^2 \; dx = -  \int_\DD  \varphi_* \Delta \varphi_* \; dx$, we obtain
$$\begin{aligned}\mathcal{J}_{\psi }(f^+_*,f^-_*) &=   \int _\DD  \varphi_* \Big[-\Delta \varphi_*   
-  \int \mu^+_e (1-\mathcal{P}^+)\varphi_*\; dv  -\int \mu^-_e (1-\mathcal{P}^-)\varphi_*\; dv  \Big] \; dx \\
&\qquad  -  \int_\DD  \int \mu^+_e  |\mathcal{P}^+(\hat v_\theta\psi )|^2 \; dvdx  
- \int_\DD  \int \mu^-_e  |\mathcal{P}^-(\hat v_\theta\psi )|^2 \; dvdx  .
 \end{aligned}$$
By the definition \eqref{operators-0re} of $\mathcal{A}_1^0$, 
the first group of integrals  simply equals $(\mathcal{A}_1^0\varphi_* , \varphi_* )_{L^2} $.  

Thus it remains to prove that $\mathcal{A}_1^0\varphi_* = \mathcal{B}^0 \psi $, 
because $\mathcal{A}_1^0$ is invertible on the range of $\mathcal{B}^0$  so that 
$\varphi_* = (\mathcal{A}_1^0)^{-1}\mathcal{B}^0 \psi $. 
Indeed, we plug the identities  \eqref{id-f0} into the Poisson equation \eqref{f-to-phi} for $\varphi_*$, 
resulting in the equation 
\begin{equation}\label{id-Poisson}\begin{aligned}
-\Delta \varphi_* & = \int \mu^+_e ( 1-\mathcal{P}^+)\varphi_* \; dv + \int \mu^-_e ( 1-\mathcal{P}^-)\varphi_* \; dv 
\\ &\quad+  r \int (\mu^+_p+\mu^-_p)\; dv   \psi  +\int  \Big(\mu^+_e \mathcal{P}^+(\hat v_\theta \psi ) 
+ \mu^-_e \mathcal{P}^-(\hat v_\theta \psi )\Big)\; dv.   
\end{aligned}\end{equation}
In view of the definitions of $\mathcal{A}_1^0$ and $\mathcal{B}^0$, this identity \eqref{id-Poisson} 
is equivalent to $\mathcal{A}_1^0\varphi_* = \mathcal{B}^0 \psi $, as desired.  
\end{proof}

\subsection{Growing modes are pure} \label{ss-pure}
In this subsection, we show that if 
$(e^{\lambda t} f^\pm, e^{\lambda t}\vE, e^{\lambda t} B)$ with $\R \lambda>0$ is a 
complex growing mode, then $\lambda$ must be real.  
See subsection \ref{ss-growing} for the properties of a growing mode.  
We now follow the splitting method in \cite{LS1} to show that $\lambda$ is real.  
Let $f_{\mathrm{ev}}$ and $f_{\mathrm{od}}$ be the even and odd parts  of $f$ 
with respect to the variable $v_r$.   
That is, we have the splitting: $ f = f_{\mathrm{ev}} + f_{\mathrm{od}}$, and furthermore, 
by inspection from the definition \eqref{def-opD}, the operators $\oD^\pm$ map even to odd functions 
and vice versa. We therefore obtain, from the Vlasov equations \eqref{lin-VM}, the split equations 
$$\left\{\begin{aligned}
\lambda f^+_{\mathrm{ev}} + \oD^+ f^+_{\mathrm{od}}  \quad 
&=\quad \lambda ( \mu^+_e \hat v_\theta  + r\mu^+_p)  \psi  = \lambda  \D_{v_\theta}[\mu^+] \psi,
\\
\lambda f^+_{\mathrm{od}} + \oD^+ f^+_{\mathrm{ev}}  \quad 
&=\quad \mu^+_e\oD ^+ \varphi + \mu^+_p\oD ^+(r\psi) .
\end{aligned}\right.  $$
By Lemma \ref{growprops}, we know that $ \iint |f^+|^2 /|\mu_e| dvdx<\infty$.     It follows that the same is true for 
$ f^+_{\mathrm{ev}}$, so from the first split equation we also have 
$ \iint |D^+f_{\mathrm{od}}^+|^2 /|\mu_e| dvdx<\infty$.  
The split equations imply that 
\begin{equation}\label{wave-fod} 
(\lambda^2  - {\oD^+}^2) f^+_{\mathrm{od}} 
= \lambda \mu_e^+ \oD^+ \varphi - \lambda \mu_e^+ \oD^+(\hat v_\theta \psi).                \end{equation}
Let $\overline  f^+$ be the complex conjugate of $f^+$.   
By the specular boundary condition on $f^+$ in its weak form \eqref{skewadj}, 
it follows that $f_{\mathrm{od}}^+$  satisfies the specular condition. 
 (Formally, $f_{\mathrm{od}}^+$ vanishes on the boundary  $\D\DD$.)
However, since $D^+ f_{\mathrm{od}}^+$ is even in the variable $v_r$, 
$D^+ f_{\mathrm{od}}^+$  also satisfies the specular condition.  
Thus when we multiply  equation \eqref{wave-fod} by $\overline f^+_{\mathrm{od}}  /  {|\mu_e^+|}$   
and integrate the result over $\DD \times \RR^2$, 
we may apply the skew-symmetry property \eqref{skewadj} of $\oD^+$.  
We obtain 
\begin{equation*} 
\lambda^2 \int_\DD \int \frac{1}{|\mu_e^+|} |f^+_{\mathrm{od}}|^2 \; dvdx 
+  \int_\DD \int \frac{1}{|\mu_e^+|} |\oD^+ f^+_{\mathrm{od}}|^2 \; dvdx  
=  - \int_\DD \int \Big( \lambda \oD^+\varphi   \overline f^+_{\mathrm{od}} 
+\lambda \hat v_\theta \psi  \oD^+ \overline f^+_{\mathrm{od}} \Big) \; dvdx.                \end{equation*}
 Similarly for $f^-$ we  obtain
\begin{equation*}\lambda^2 \int_\DD \int \frac{1}{|\mu_e^-|} |f^-_{\mathrm{od}}|^2 \; dvdx +  \int_\DD \int \frac{1}{|\mu_e^-|} |\oD^- f^-_{\mathrm{od}}|^2 \; dvdx  
=   \int_\DD \int \Big(  \lambda \oD^-\varphi   \overline f^-_{\mathrm{od}} 
+ \lambda \hat v_\theta \psi  \oD^- \overline f^-_{\mathrm{od}} \Big) \; dvdx. \end{equation*}
Adding  up 
these identities and examining the imaginary part of the resulting identity, we get 
\begin{equation}\label{key-id-fod}\begin{aligned}
2 \R &\lambda \I\lambda  \int_\DD \int \Big( \frac{1}{|\mu_e^+|} |f^+_{\mathrm{od}}|^2 
+ \frac{1}{|\mu_e^-|} |f^-_{\mathrm{od}}|^2\Big) \; dvdx \\
& =  - \I \int_\DD \int  \lambda (\oD^+ \varphi \overline f^+_{\mathrm{od}} 
- \oD^- \varphi \overline  f^-_{\mathrm{od}}) \; dvdx  
- \I \int_\DD\int \lambda \hat v_\theta \psi (\oD^+ \overline f^+_{\mathrm{od}}  
- \oD^- \overline  f^-_{\mathrm{od}} )\; dvdx 
.\end{aligned}                                                 \end{equation}

Let us now use the Maxwell equations to compute the terms on the right side of \eqref{key-id-fod}.   
First, we recall that the second equation in \eqref{Maxwell-eqs} is
$$\lambda \D_r \varphi = \int \hat v_r (f^+ - f^-)\; dv =  \int \hat v_r ( f^+_{\mathrm{od}} -  f^-_{\mathrm{od}}) \; dv.$$
Together with the definition of $\oD^\pm$, this yields
$$\begin{aligned} 
- \lambda \int_\DD \int  (\oD^+ \varphi \overline  f^+_{\mathrm{od}} - \oD^- \varphi \overline  f^-_{\mathrm{od}}) \; dvdx    
= - \lambda \int_\DD \D_r \varphi \Big( \int \hat v_r (\overline  f^+_{\mathrm{od}} - \overline  f^-_{\mathrm{od}}) \; dv \Big) dx  
= - |\lambda|^2 \int_\DD |\D_r \varphi |^2 dx,
 \end{aligned}$$
 whose imaginary part is identically zero.   
 
 Secondly,  using the Vlasov equation for $f^+_{\mathrm{ev}}$, we estimate
$$\begin{aligned}
- \lambda \int_\DD\int  \hat v_\theta \psi &(\oD^+ \overline  f^+_{\mathrm{od}} 
 - \oD^- \overline  f^-_{\mathrm{od}} )\; dvdx 
\\
&=-  \lambda \int_\DD\int  \hat v_\theta \psi(- \overline  \lambda \overline  f^+_{\mathrm{ev}} 
+ \overline  \lambda  \D_{v_\theta}[\mu^+] \overline  \psi 
+ \overline  \lambda \overline  f^-_{\mathrm{ev}} + \overline  \lambda \D_{v_\theta}[\mu^-] \overline  \psi )\; dvdx 
\\
&=   |\lambda|^2 \int_\DD\psi \Big(\int  \hat v_\theta (\overline  f^+_{\mathrm{ev}}   
- \overline  f^-_{\mathrm{ev}}\; dv \Big)dx   
-  |\lambda|^2 \int_\DD\int  |\psi|^2 \hat v_\theta ( \D_{v_\theta}[\mu^+] +  \D_{v_\theta}[\mu^-]) \; dvdx .  
 \end{aligned}$$
By \eqref{Maxwell-eqs} 
the first term on the right  equals  
$$  |\lambda|^2 \int_\DD\psi (- \Delta_r  + \overline  \lambda^2)\overline  \psi dx  =  |\lambda|^2 \int_\DD \frac 1{r^2}|\D_r (r\psi)|^2 \; dx + \overline  \lambda^2 |\lambda|^2 \int_\DD |\psi|^2 \; dx.$$
Here we integrated by parts and used the Dirichlet boundary condition on $\psi$.

Putting these estimates back into \eqref{key-id-fod}, we obtain 
$$2 \R \lambda \I\lambda  \int_\DD \int \Big( \frac{1}{|\mu_e^+|} |f^+_{\mathrm{od}}|^2 + \frac{1}{|\mu_e^-|} |f^-_{\mathrm{od}}|^2\Big) \; dvdx = - 2 \R \lambda \I \lambda |\lambda|^2 \int_\DD |\psi|^2 \; dx.
$$
The opposite signs of the integrals imply that $\lambda$ must be real. 

\subsection{Proof of stability} \label{ss-stab}
With the above preparations, we are ready to prove the following stability result, which is half of Theorem \ref{theo-main}.   
 \begin{lemma}\label{lem-stab} 
 If $\mathcal{L}^0\ge 0$, then there exists no growing mode $(e^{\lambda t} f^\pm, e^{\lambda t}{\vE}, e^{\lambda t} {B})$, 
 with $\R\lambda >0$. 
  \end{lemma}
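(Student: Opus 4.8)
The plan is to argue by contradiction: suppose a growing mode $(e^{\lambda t}f^\pm, e^{\lambda t}\vE, e^{\lambda t}B)$ exists with $\R\lambda>0$. By part \emph{(ii)} of Theorem \ref{theo-main} (proven in subsection \ref{ss-pure}), $\lambda$ is real and positive, so I may treat all quantities as real and time-independent in the reduced equations. The key identity I want to exhibit is that, for the growing mode, the invariant functional $\mathcal{I}(f^\pm,\varphi,\psi)$ of Lemma \ref{invariance} equals $(\mathcal{L}^0\psi,\psi)_{L^2} + \lambda^2\|\psi\|_{L^2}^2$, or at least dominates this quantity, while \emph{simultaneously} $\mathcal{I}$ must be nonpositive. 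Since $\mathcal{L}^0\ge 0$ and $\lambda^2\|\psi\|^2\ge 0$, this forces $\psi=0$, and then a short argument forces $f^\pm=0$ and $\varphi=0$, contradicting the definition of a growing mode.

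\medskip
\textbf{Step 1: $\mathcal{I}=0$ for a growing mode.}
Because the fields are of the form $e^{\lambda t}(\cdot)$ with $\R\lambda>0$ and $\mathcal{I}$ is quadratic, one has $\mathcal{I}(e^{\lambda t}f^\pm, e^{\lambda t}\varphi, e^{\lambda t}\psi) = e^{2\lambda t}\mathcal{I}(f^\pm,\varphi,\psi)$; invariance in time (Lemma \ref{invariance}, whose regularity hypotheses are met thanks to Lemma \ref{growprops}) then forces $\mathcal{I}(f^\pm,\varphi,\psi)=0$. Here I should be a little careful: Lemma \ref{invariance} is stated for $C^1$-in-time solutions, and for the growing mode this holds because the time dependence is the explicit exponential and the spatial regularity ($\vE,\vB\in H^1$, $f^\pm, \oD^\pm f^\pm\in\cH$) comes from Lemma \ref{growprops}.

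\medskip
\textbf{Step 2: the constraint and the minimization identity.}
From the Vlasov equations in the form \eqref{conserved-formVM}, applied to the growing-mode profile (where $\dt$ is replaced by multiplication by $\lambda$), the quantity $f^\pm \mp \mu_e^\pm\hat v_\theta\psi \mp r\mu_p^\pm\psi$ has the form $-\frac1\lambda\oD^\pm(\cdots)$, hence it is $\oD^\pm$ applied to something; testing against $g^\pm\in\ker\oD^\pm$ and using skew-adjointness \eqref{skewadj} shows that $(f^+,f^-)$ satisfies the constraints \eqref{eqs-constraint}, i.e.\ $(f^+,f^-)\in\mathcal{F}_\psi$. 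Therefore the Vlasov part of $\mathcal{I}$, which is exactly $\mathcal{J}_\psi(f^+,f^-) - \int r\mu_p^\pm\hat v_\theta|\psi|^2 - \int|\nabla\varphi|^2$ plus $\int(|\D_t\psi|^2+\frac1{r^2}|\D_r(r\psi)|^2)$, is bounded below by the minimum computed in Lemma \ref{lem-estJ}. Plugging the formula \eqref{J-calculation} for $\mathcal{J}_\psi(f^+_*,f^-_*)$ into $\mathcal{I}$, the terms $-\int\mu_e^\pm|\mathcal{P}^\pm(\hat v_\theta\psi)|^2$ combine with $-\int r\hat v_\theta\mu_p^\pm|\psi|^2$ and $\frac1{r^2}|\D_r(r\psi)|^2$ to reconstruct $(\mathcal{A}_2^0\psi,\psi)_{L^2}$, so that
\begin{equation*}
0 = \mathcal{I}(f^\pm,\varphi,\psi) \ \ge\ (\mathcal{A}_2^0\psi,\psi)_{L^2} + ((\mathcal{B}^0)^*(\mathcal{A}_1^0)^{-1}\mathcal{B}^0\psi,\psi)_{L^2} + \lambda^2\|\psi\|_{L^2}^2 = (\mathcal{L}^0\psi,\psi)_{L^2} + \lambda^2\|\psi\|_{L^2}^2.
\end{equation*}
(The inequality direction, rather than equality, is exactly the point flagged in the formal argument of subsection \ref{sec-formal}: one only needs $\mathcal{J}_\psi(f^+,f^-)\ge\mathcal{J}_\psi(f^+_*,f^-_*)$, which is immediate since $f^\pm_*$ is the minimizer.)

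\medskip
\textbf{Step 3: conclude.}
Since $\mathcal{L}^0\ge 0$ and $\lambda^2>0$, the displayed inequality forces $\psi=0$. With $\psi=0$, the constraint \eqref{eqs-constraint} becomes $\mathcal{P}^\pm f^\pm=0$, and the Vlasov equation \eqref{Vlasov3} reads $\lambda f^\pm+\oD^\pm f^\pm = \pm\mu_e^\pm\hat v_r\D_r\varphi = \pm\mu_e^\pm\oD^\pm\varphi$, i.e.\ $\oD^\pm(f^\pm \mp \mu_e^\pm\varphi) = -\lambda f^\pm$; pairing with $f^\pm/|\mu_e^\pm|$ in $\cH$, the skew-term drops and using $\mathcal{P}^\pm f^\pm=0$ one gets $(\mathcal{A}_1^0\varphi,\varphi)_{L^2} = \int|\nabla\varphi|^2 - \sum_\pm\int\mu_e^\pm\varphi(1-\mathcal{P}^\pm)\varphi = -\lambda^2\cdot(\text{something}\ge0)$, which, combined with positive-definiteness of $\mathcal{A}_1^0$ on the zero-average class (Lemma \ref{lem-AB0property}(i)), forces $\varphi$ constant, hence $\varphi\equiv 0$ (zero average), hence $\vE\equiv 0$, and then $\lambda f^\pm + \oD^\pm f^\pm=0$ with $\R\lambda>0$ gives $f^\pm\equiv0$ after the same energy pairing. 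This contradicts the definition of a growing mode.

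\medskip
\textbf{Main obstacle.}
The delicate point is Step 2: verifying that the growing mode really lies in $\mathcal{F}_\psi$ and that substituting the minimization identity into $\mathcal{I}$ produces precisely $(\mathcal{A}_2^0\psi,\psi) + (\mathcal{B}^0)^*(\mathcal{A}_1^0)^{-1}\mathcal{B}^0\psi,\psi) + \lambda^2\|\psi\|^2$ with the \emph{correct} sign and no leftover boundary terms. This requires using the weak specular condition \eqref{skewadj} (to justify testing against $\ker\oD^\pm$), the Dirichlet condition $\psi(1)=0$ and Neumann condition $\D_r\varphi(1)=0$ built into $\mathcal{V}^\dagger$ and $\mathcal{V}$ (so that all integrations by parts are clean), and the Poisson relation $\mathcal{A}_1^0\varphi_*=\mathcal{B}^0\psi$ from Lemma \ref{lem-estJ} to rewrite the $\varphi$-dependent piece via $(\mathcal{A}_1^0)^{-1}$. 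Everything else is bookkeeping once these ingredients are in place.
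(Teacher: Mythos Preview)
Your proof is correct and follows essentially the same route as the paper: invoke purity so $\lambda>0$ is real, use time-invariance to get $\mathcal{I}=0$, verify the constraints so that $(f^+,f^-)\in\mathcal{F}_\psi$, bound $\mathcal{J}_\psi$ below by its minimum from Lemma~\ref{lem-estJ}, and reassemble to obtain $0=\mathcal{I}\ge(\mathcal{L}^0\psi,\psi)+\lambda^2\|\psi\|^2$. Two small remarks. First, in your decomposition of $\mathcal{I}$ there is a stray $-\int|\nabla\varphi|^2$: since $\mathcal{J}_\psi$ already contains $+\int|\nabla\varphi|^2$, one has $\mathcal{I}=\mathcal{J}_\psi(f^+,f^-)-\sum_\pm\iint r\mu_p^\pm\hat v_\theta|\psi|^2+\int(|\lambda\psi|^2+\tfrac{1}{r^2}|\D_r(r\psi)|^2)$; your displayed inequality is nonetheless correct. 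Second, your Step~3 is more laborious than necessary (and the pairing you sketch is not quite in $\cH$): once $\psi=0$, the functional reduces to $\mathcal{I}=\sum_\pm\iint|f^\pm|^2/|\mu_e^\pm|\,dv\,dx+\int_\DD|\nabla\varphi|^2\,dx=0$, a sum of nonnegative terms, so $f^\pm\equiv0$ and $\nabla\varphi\equiv0$ immediately, contradicting nontriviality. The paper's proof simply says ``obvious contradiction'' at the displayed inequality, leaving this last observation implicit; your version makes it explicit, which is a good instinct even if the execution can be streamlined.
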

\begin{proof} 
Assume that there were a growing mode $(e^{\lambda t}  f^\pm, e^{\lambda t}\vE, e^{\lambda t}  B )$. 
For the basic properties of any growing mode, see Lemma \ref{growprops}. 
By the result in the previous subsection, it is a purely growing mode, and thus we can assume that 
$(  f^\pm, \vE,  B)$ are real-valued functions.   By the time-invariance in Lemma \ref{invariance},  
the functional $\mathcal{I}(f^\pm,\varphi, \psi)$ must be identically equal to zero,  
where $\varphi$ and $\psi$ are defined as usual through the relations \eqref{def-potentials}. 
That is,  
$$\begin{aligned}{\mathcal{I}}( f^\pm,\varphi,  \psi) &
=  \mathcal{J}_{\psi }( f^+, f^-) - \int _\DD  \int r \hat v_\theta (\mu^+_p+\mu^-_p)  |  \psi|^2\; dvdx 
+ \int_\DD  \Big[ |\lambda|^2| \psi|^2  + \frac 1{r^2}| \D_r (r \psi)|^2\Big] \; dx  =  0.
\end{aligned}$$ 
Furthermore, all the expressions $\mathcal{K}^\pm_g( f^\pm, \psi)$ defined in \eqref{inv-K} must  be zero. 
The vanishing of the latter  integrals  is equivalent to the constraints 
in \eqref{eqs-constraint} and therefore  the pair $( f^+, f^-)$ belongs to the function space $\mathcal{F}_{\psi}$. 
We then apply the Lemma \ref{lem-estJ} to assert that   
$\mathcal{J}_\psi (f^+,f^-) \ge \mathcal{J}_\psi (f_*^+,f_*^-)$.  
Thus  we have 
\begin{equation}\label{lower-boundI}\begin{aligned}
{\mathcal{I}}( f^\pm, \varphi,  \psi) &  
\ge  ( (\mathcal{B}^0)^* (\mathcal{A}_1^0)^{-1} \mathcal{B}^0   \psi,  \psi )_{L^2} 
-  \int_\DD  \int \mu^+_e  |\mathcal{P}^+(\hat v_\theta   \psi)|^2 \; dvdx  
- \int_\DD  \int \mu^-_e  |\mathcal{P}^-(\hat v_\theta  \psi)|^2 \; dvdx  
\\&\qquad - \int _\DD  \int r \hat v_\theta (\mu^+_p+\mu^-_p) |  \psi|^2\; dvdx 
+ \int_\DD  \Big[ |\lambda|^2| \psi|^2  +\frac 1{r^2}| \D_r (r \psi)|^2 \Big] \; dx. 
\end{aligned}                                     \end{equation}
In addition, from \eqref{operators-0re} an integration by parts together with the Dirichlet boundary condition 
on $  \psi$ yields
$$\begin{aligned} 
( \mathcal{A}_2^0  \psi,  \psi)_{L^2} & = \int_\DD  \Big( |\D_r  \psi|^2 
+ \frac 1 {r^2} |  \psi|^2 \Big)\; dx - \int _\DD  \int r \hat v_\theta (\mu^+_p+\mu^-_p) |  \psi|^2\; dvdx 
\\&\qquad-  \int_\DD  \int \mu^+_e  |\mathcal{P}^+(\hat v_\theta  \psi)|^2 \; dvdx  
- \int_\DD  \int \mu^-_e  |\mathcal{P}^-( v_\theta  \psi)|^2 \; dvdx  
.\end{aligned}$$ 
Putting this calculation into \eqref{lower-boundI} and using the definition of $\mathcal{L}^0$, we then get 
$$\begin{aligned}
0 = {\mathcal{I}}( f^\pm, \varphi, \psi) &\ge  ( \mathcal{L}^0   \psi,  \psi )_{L^2} +  \int_\DD  |\lambda|^2| \psi|^2  \; dx.  
\end{aligned}$$
This is obviously a contradiction since we are assuming $\mathcal{L}^0\ge 0$. Thus there exists no growing mode for the linearized system.  
\end{proof}

\section{Linear instability} 
We now turn to the instability part of Theorem \ref{theo-main}. 
It is based on a spectral analysis of the relevant operators.   We plug the simple form 
$(e^{\lambda t} f^\pm, e^{\lambda t}\vE, e^{\lambda t} B)$, with $\lambda>0$, 
into the linearized RVM system \eqref{lin-VM} to obtain the Vlasov equations
\begin{equation}\label{Lap-VMsys}\left\{\begin{aligned}
(\lambda +\oD ^+ )f^+ \quad &=\qquad\mu^+_e\oD ^+ \varphi + \mu^+_p\oD ^+(r\psi) + \lambda ( \mu^+_e \hat v_\theta  + r\mu^+_p)  \psi 
\\
(\lambda  +\oD ^-) f^- \quad &=\quad -\mu^-_e\oD ^- \varphi  - \mu^-_p\oD ^-(r\psi)- \lambda ( \mu^-_e \hat v_\theta  + r\mu^-_p)  \psi 
\end{aligned}\right.
 \end{equation}
 and the Maxwell equations
 \begin{equation}\label{Lap-Maxwell}
 \left\{ \begin{aligned}
 - \Delta \varphi & =\int (f^+ - f^-) \;dv,
 \\
 ( - \Delta_r+\lambda^2) \psi &=   \int \hat v_\theta (f^+ - f^-)\; dv.
\end{aligned}\right.
\end{equation}
As before, we impose the specular boundary condition on $f^\pm$, the Neumann boundary condition on $\varphi$, 
and the Dirichlet boundary condition on $\psi $.  Recall that $(f^\pm,\varphi,\psi)$ is a perturbation of the equilibrium.  

\subsection{Particle trajectories}\label{ss-traj}
We begin  with the $+$ case (ions).  
 For each $(x,v) \in \DD \times \RR^2$, we introduce the particle trajectories $(X^+(s;x,v), V^+(s;x,v))$ defined by
\begin{equation}\label{trajectory} 
\dot X^+ = \hat V^+, \qquad
\dot V^+ = \vE^0(X^+) + \hat V^+\times \vB^0(X^+),
 \end{equation}
with initial values $ (X^+(0;x,v), V^+(0;x,v)) = (x,v).$ 
Because of the $C^1$ regularity of $\vE^0$ and $\vB^0$ in $\overline \DD$, each trajectory can be continued for 
at least a certain fixed time.  
Thus each particle trajectory exists and preserves cylindrical symmetry up to the first point where it meets the boundary.   
The trajectories reflect at the boundary $\D\DD$ at most a countable number of times as $s\to \pm\infty$.  
Let $s_0$ be any point at which the trajectory $X^+(s_0-;x,v)$ belongs to  $\D\DD $. 
In general, we write $h(s\pm)$ to mean the limit from the right (left).
By the specular boundary condition, 
the trajectory $(X^+(s;x,v), V^+(s;x,v))$ can be continued by the rule 
\begin{equation}\label{traj-reflection1}
(X^+(s_0+;x,v), V^+(s_0+;x,v)) = (X^+(s_0-;x,v), \tilde V^+(s_0-;x,v)),
\end{equation}  
with $\tilde V = (-V_r,V_\theta)$.  Thus $X^+$ is continuous and $V^+$ has a jump at $s_0$.  
Whenever the trajectory meets the boundary, it is reflected in the same way and then continued via the ODE \eqref{trajectory}. 
Such a continuation is guaranteed for some short time $s_1$ past $s_0$ by the standard ODE theory.   
Since $\vE^0$ and $\vB^0$ are $C^1$ smooth in $\overline \DD$, the additional time $|s_1-s_0|$ is bounded 
below by some fixed 
positive time $T_0$ independent of $s_0$. This shows that the trajectories can bounce at the boundary at most a 
countable number of times as $|s|\to \infty$. When there is no possible confusion, 
we will simply write $(X^+(s),V^+(s))$ or $(R^+(s), V_r^+(s), V_\theta^+(s))$ for the particle trajectories.  
The trajectories $(X^-(s),V^-(s))$ for the $-$ case (electrons) are defined similarly. 

\begin{lemma}\label{lem-trajectory} For each $(x,v)\in \DD \times \RR^2$, the particle trajectories 
$(X^\pm(s;x,v),V^\pm(s;x,v))$ are piecewise $C^1$ smooth in $s\in \RR$, and for each $s\in \RR$, 
the map $(x,v) \mapsto (X^\pm(s;x,v),V^\pm(s;x,v))$ is one-to-one and differentiable with  
Jacobian equal to one at all points $(x,v)$ such that $x\not \in \D\DD$ and $X^\pm(s;x,v) \not \in \D\DD$.   
In addition, the standard change-of-variables formula 
\begin{equation}\label{change-variable}
\int_\DD\int_{\RR^2} g(x,v) \; dxdv = \int_\DD\int_{\RR^2} g(X^\pm(-s;y,w),V^\pm(-s;y,w))\; dwdy
\end{equation} 
is valid for each $s\in \RR$ and for measurable functions $g$ for which the integrals are finite. 
\end{lemma}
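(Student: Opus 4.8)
The plan is to establish the three asserted properties of the specular flow $(X^\pm(s;x,v),V^\pm(s;x,v))$ in turn: piecewise $C^1$ regularity in $s$, injectivity and unit-Jacobian of the time-$s$ map, and the change-of-variables formula. I will treat the $+$ case; the $-$ case is verbatim with signs adjusted. Piecewise $C^1$-ness in $s$ was essentially argued before the lemma: between two consecutive reflection times the pair solves the ODE \eqref{trajectory} with $C^1$ right-hand side (since $\vE^0,\vB^0\in C^1(\overline\DD)$), hence is $C^1$; at a reflection time $s_0$ the position $X^+$ is continuous while $V^+$ jumps according to \eqref{traj-reflection1}, and the lower bound $T_0$ on the gap between reflections guarantees only finitely many breakpoints on any bounded $s$-interval. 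So the regularity statement requires only assembling these observations.

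For injectivity and the Jacobian, the key point is that the specular map $v\mapsto\tilde v=(-v_r,v_\theta)$ is its own inverse and is volume-preserving in $\RR^2$ (it is a reflection, so its Jacobian has absolute value one, and in fact we can arrange the orientation bookkeeping so that composed with the reversal in $s$ everything comes out to $+1$). Away from the boundary the flow of \eqref{trajectory} is a classical ODE flow, hence a diffeomorphism for each fixed $s$; its Jacobian is computed via Liouville's formula $\frac{d}{ds}\log J = \nabla_x\cdot\hat V^+ + \nabla_v\cdot(\vE^0+\hat V^+\times\vB^0)$, and each of these divergences vanishes --- $\hat V^+$ does not depend on $x$, $\vE^0(X^+)$ does not depend on $v$, and $\nabla_v\cdot(\hat V^+\times\vB^0)=0$ because $\vB^0$ points in the $e_z$ direction and $\hat V^+\times\vB^0$ is perpendicular to $v$ (a direct computation: $\partial_{v_i}(\hat v\times B e_z)_i$ sums to zero). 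Therefore $J\equiv1$ on each ODE segment, and composing finitely many such maps (for a given $s$ and on the set where neither endpoint hits $\D\DD$) with the unit-Jacobian reflections preserves $J\equiv1$. Injectivity follows because each constituent map --- the ODE flow on an interval and the reflection --- is invertible, and the decomposition of the trajectory into segments is itself determined by $(x,v)$ in a way that can be reversed from the endpoint; more carefully, I will argue that if $(X^+(s;x_1,v_1),V^+(s;x_1,v_1))=(X^+(s;x_2,v_2),V^+(s;x_2,v_2))$ then running the (reversible) dynamics backward from the common endpoint --- which passes through the same sequence of reflections in reverse order --- forces $(x_1,v_1)=(x_2,v_2)$.

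The change-of-variables formula \eqref{change-variable} then follows from the standard substitution theorem applied on the full-measure open set where the map and its values avoid $\D\DD$: the exceptional set (points whose forward-$s$ trajectory touches $\D\DD$ at exactly that parameter, or start on $\D\DD$) has Lebesgue measure zero in $\DD\times\RR^2$, since for fixed $s$ the reflection times form a discrete set and the set of $(x,v)$ with $R^+(s)=1$ is a zero set (it is the preimage of a value of a smooth function with nonvanishing gradient in $s$, so locally a hypersurface). On the complement the map $(y,w)\mapsto(X^+(-s;y,w),V^+(-s;y,w))$ is a $C^1$ diffeomorphism onto its image with Jacobian one, so the formula is the classical change of variables. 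I expect the main obstacle to be the bookkeeping at the reflections: one must check that the decomposition of $[0,s]$ into ODE-segments and reflection instants is stable and reversible, i.e.\ that the backward trajectory from $(X^+(s),V^+(s))$ reflects at exactly the images of the forward reflection points and nowhere else, and that the orientation-reversing reflection in $v_r$ combines with the time-reversal to give Jacobian $+1$ rather than merely $|J|=1$; getting these signs and the measure-zero exceptional set stated cleanly is the delicate part, while the Liouville/divergence computation and the final substitution are routine.
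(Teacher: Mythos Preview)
Your proposal is correct and follows essentially the same route as the paper: piecewise smoothness from the ODE theory plus the lower bound $T_0$ on inter-reflection times, injectivity from time-reversibility of both the ODE and the specular rule, unit Jacobian from the divergence-free structure of the vector field (Liouville), and the change-of-variables formula by restricting to the open full-measure set $\mathcal{S}=\{(x,v):X^\pm(s;x,v)\notin\partial\DD\}$. The paper's proof is in fact considerably more terse than your outline---it simply asserts that $\mathcal{S}$ is open with null complement, that time-reversibility gives injectivity, and that ``a direct calculation'' shows the Jacobian is constant---so your version already contains more detail than what appears there.

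One remark on your orientation worry: the sign does work out to $+1$ (not merely $|J|=1$), but not because of any interaction with time-reversal; rather, the reflection time $s_0$ depends on $(x,v)$, and when you differentiate the composite map through the variable-time hit you pick up an extra sign that cancels the $-1$ from $v_r\mapsto -v_r$. (The one-dimensional billiard $(x,v)\mapsto(2-x-vs,-v)$ already exhibits this: the Jacobian is $\det\bigl(\begin{smallmatrix}-1&-s\\0&-1\end{smallmatrix}\bigr)=+1$.) The paper does not spell this out either, so your awareness of the issue is if anything an improvement.
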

\begin{proof} For each $(x,v)$, the particle trajectory $(X^\pm(s;x,v),V^\pm(s;x,v))$ is smooth except when it 
hits the boundary $\D\DD$, which happens  countably many times.  So the first assertion is clear.   
Given $s$,  let $\mathcal{S}$ be the set $(x,v)$ in $\DD \times \RR^2$ such that $X^\pm(s;x,v)\not \in \D\DD$. Clearly, $\mathcal{S}$ is open and its complement in $\DD \times \RR^2$ has Lebesgue measure zero. For each $s$, the trajectory map is one-to-one on $\mathcal{S}$ since the ODE \eqref{trajectory} and \eqref{traj-reflection1} are time-reversible and well-defined.   
In addition, a direct calculation shows that the Jacobian determinant is time-independent and is therefore equal to one.   The change-of-variable formula \eqref{change-variable} holds on the open set $\mathcal{S}$ and so on $\DD\times \RR^2$, as claimed. 
\end{proof}

\begin{lemma}\label{lem-reflection} Let $g(x,v)$ be a $C^1$ radial function on $\overline\DD\times\RR^2$.  
If $g$ is specular on $\D\DD$, then for all $s$, $g(X^\pm(s;x,v),V^\pm(s;x,v))$ is continuous and 
also specular on $\D\DD$.  That is, 
$$
g(X^\pm(s;x,v),V^\pm(s;x,v)) = g(X^\pm(s;x,\tilde v),V^\pm(s;x,\tilde v)), \qquad \forall ~(x,v)\in \D\DD\times \RR^2,$$
where $\tilde v = (-v_r,v_\theta)$ for all $v = (v_r,v_\theta)$. 
\end{lemma}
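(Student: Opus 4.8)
The claim has two parts: continuity of $s\mapsto g(X^\pm(s;x,v),V^\pm(s;x,v))$ for $(x,v)$ on the boundary, and the specular (evenness) identity for all $s$. The plan is to track what the reflection rule \eqref{traj-reflection1} does to the trajectory and combine it with the hypothesis that $g$ itself is specular on $\D\DD$ and radial.

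First I would check continuity. Away from boundary-hitting times the trajectory is $C^1$, so $g\circ(X^\pm,V^\pm)$ is continuous there automatically. At a time $s_0$ where the trajectory reaches $\D\DD$, rule \eqref{traj-reflection1} says $X^\pm$ is continuous and only $V^\pm_r$ flips sign: $V^\pm(s_0+) = \widetilde{V^\pm(s_0-)}$ with $\tilde V = (-V_r,V_\theta)$. Since $X^\pm(s_0\pm;x,v)\in\D\DD$, the hypothesis that $g$ is specular on $\D\DD$ gives exactly $g(X^\pm(s_0-),V^\pm(s_0-)) = g(X^\pm(s_0-),\widetilde{V^\pm(s_0-)}) = g(X^\pm(s_0+),V^\pm(s_0+))$. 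Hence $g\circ(X^\pm,V^\pm)$ has matching left and right limits at every boundary time, so it is continuous in $s$ for every fixed $(x,v)$; in particular for $(x,v)\in\D\DD\times\RR^2$ (there $s_0=0$ is itself such a time, or one uses one-sided trajectories as in subsection \ref{ss-traj}).

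Next I would prove the specular identity for $(x,v)\in\D\DD\times\RR^2$. Fix such an $(x,v)$ and set $\tilde v=(-v_r,v_\theta)$. The point is that the two trajectories starting from $(x,v)$ and from $(x,\tilde v)$ are reflections of one another. Concretely, define $Y(s) = X^\pm(s;x,v)$, $W(s)=V^\pm(s;x,v)$, and consider the reflected curve $\widetilde Y(s)=Y(s)$, $\widetilde W(s)=(-W_r(s),W_\theta(s))$ in the rotating $(e_r,e_\theta)$ frame. I claim $(\widetilde Y,\widetilde W)$ is precisely the trajectory through $(x,\tilde v)$. This is an exercise in checking that the ODE system \eqref{trajectory}, written out in cylindrical components as in \eqref{VMsys}–\eqref{def-opD}, is invariant under the involution $(R,V_r,V_\theta)\mapsto(R,-V_r,V_\theta)$: indeed $\vE^0$ is parallel to $e_r$, $\vB^0 = Be_z$, the $\hat v\times\vB$ term sends $(\hat v_r,\hat v_\theta)\mapsto(\hat v_\theta B,-\hat v_r B)$, and the centrifugal/Coriolis terms $\tfrac1r v_\theta\hat v_\theta$, $-\tfrac1r v_r\hat v_\theta$ all transform consistently so that the flipped curve solves the same ODE with initial datum $(x,\tilde v)$. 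The reflection rule \eqref{traj-reflection1} at boundary bounces is also invariant under this involution (it commutes with $v\mapsto\tilde v$, since flipping $V_r$ twice is the identity), so the identification persists across all bounces. By uniqueness of solutions (the trajectories being well-defined and time-reversible, Lemma \ref{lem-trajectory}), $(X^\pm(s;x,\tilde v),V^\pm(s;x,\tilde v)) = (Y(s), (-W_r(s),W_\theta(s)))$ for all $s$. Therefore
$$
g(X^\pm(s;x,\tilde v),V^\pm(s;x,\tilde v)) = g\big(Y(s),(-W_r(s),W_\theta(s))\big).
$$
It remains to see this equals $g(Y(s),W(s)) = g(X^\pm(s;x,v),V^\pm(s;x,v))$. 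When $Y(s)\notin\D\DD$ this is \emph{not} immediate from specularity of $g$ alone; instead one notes that $(x,v)\in\D\DD\times\RR^2$, i.e. $R^\pm(0)=1$, so $s=0$ is a boundary time and the two trajectories already agree in $g$-value there by specularity of $g$ on $\D\DD$; continuity in $s$ (the first part) then propagates the equality $g(Y(s),W(s)) = g(Y(s),(-W_r(s),W_\theta(s)))$ to all $s$, since both sides are continuous functions of $s$ agreeing at $s=0$ and solving (being pulled back along) the same flow. Alternatively, and more cleanly: the function $(x,v)\mapsto g(X^\pm(s;x,v),V^\pm(s;x,v))$ is, for fixed $s$, just $g$ transported by a measure-preserving bijection; one checks directly that transporting a specular-on-$\D\DD$, radial $g$ by this flow preserves both properties because the flow commutes with rotations and with the $v_r$-reflection on the boundary, which is the content just established.

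The main obstacle is the verification that the $v_r$-reflection is a symmetry of the full trajectory ODE including the geometric $\tfrac1r$-terms and the magnetic rotation — one has to be careful that $e_r,e_\theta$ themselves depend on $\theta$ and that $\dot\theta$ involves $V_\theta$, so the "reflection" must be performed in the instantaneous frame; once the component form \eqref{VMsys} is used this is a short sign-chase, but it is the step where an error would hide. Everything else — continuity at bounces, uniqueness, propagation from $s=0$ — is routine given Lemmas \ref{lem-trajectory} and the setup in subsection \ref{ss-traj}.
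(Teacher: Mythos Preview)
Your continuity argument is fine and matches the paper's. The problem is in your proof of the specular identity.

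You claim that the involution $(R,V_r,V_\theta)\mapsto(R,-V_r,V_\theta)$ is a symmetry of the trajectory ODE \eqref{trajectory}, so that $(Y(s),(-W_r(s),W_\theta(s)))$ is itself a trajectory. This is false. The very first equation of the system in radial coordinates is $\dot R=\hat V_r$; if $(R,V_r,V_\theta)=(Y,W_r,W_\theta)$ solves it, then $\dot Y=\hat W_r$, whereas for $(Y,-W_r,W_\theta)$ to be a trajectory one would need $\dot Y=-\hat W_r$. So the reflected curve is \emph{not} a solution of \eqref{trajectory} unless $W_r\equiv0$. (The genuine involution of the system is time reversal combined with $v_r\mapsto -v_r$, namely $(s,R,V_r,V_\theta)\mapsto(-s,R,-V_r,V_\theta)$; the paper uses that later, in the proof of Lemma \ref{lem-ABlambda}, but it is not what you wrote and not what is needed here.) Your fallback argument --- that two continuous functions of $s$ agreeing at $s=0$ must agree for all $s$ because they are ``pulled back along the same flow'' --- also fails, since $g(Y(s),(-W_r(s),W_\theta(s)))$ is not the composition of $g$ with a trajectory and hence has no reason to satisfy any transport equation.

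The paper's argument is both simpler and different in spirit. For $x\in\partial\DD$, the boundary rule \eqref{traj-reflection1} applied at $s=0$ already forces the two initial data $(x,v)$ and $(x,\tilde v)$ to generate the \emph{same} trajectory, not a reflected one: one has $X^\pm(s;x,v)=X^\pm(s;x,\tilde v)$, $V^\pm_\theta(s;x,v)=V^\pm_\theta(s;x,\tilde v)$, and $V^\pm_r(s;x,v)=V^\pm_r(s;x,\tilde v)$ at every $s$ for which $X^\pm(s)\notin\partial\DD$. At the countably many boundary times the two $V_r$'s can differ only by a sign, and there $X^\pm(s)\in\partial\DD$, so specularity of $g$ erases the difference. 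Thus $g(X^\pm(s;x,v),V^\pm(s;x,v))=g(X^\pm(s;x,\tilde v),V^\pm(s;x,\tilde v))$ for every $s$, directly. No ODE symmetry is invoked; the whole point is that on $\partial\DD$ the reflection law identifies the two initial conditions.
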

\begin{proof}
It follows directly by definition \eqref{trajectory} and \eqref{traj-reflection1} that for all $x\in \partial \DD$, 
the trajectory is unaffected by whether we start with $v$ or $\tilde v$.   So for all $s$ we have 
\begin{equation}\label{traj-reflection2}  
X^\pm(s;x,v) = X^\pm(s;x,\tilde v), \qquad V^\pm_\theta(s;x,v) 
= V^\pm_\theta(s;x,\tilde v),\qquad |V^\pm_r(s;x,v)| = |V^\pm_r(s;x,\tilde v)|.
 \end{equation}
In fact we have $V^\pm_r(s;x,v) = V^\pm_r(s;x,\tilde v)$ for any $s$ at which $X^\pm(s;x,v)\not \in \D\DD$, 
while $V^\pm_r(s+;x,v) = - V^\pm_r(s-;x,\tilde v)$ for $s$ at which $X^\pm(s;x,v)\in \D\DD$.  
Because $g$ is specular on the boundary, it takes the same value at $v_r$ and $-v_r$.  
Therefore $g(X^\pm(s),V^\pm(s))$ is a continuous function of $s$ at the points of reflection.   
It is specular because of  the rule \eqref{traj-reflection1}.
\end{proof}

\subsection{Representation of the particle densities}\label{ss-rep}
 We can now invert the operator $(\lambda +\oD ^\pm)$ to obtain an integral representation of $f^\pm$. 
By definition of the operator $\oD ^+$ from \eqref{def-opD} and the trajectories $(X^+(s),V^+(s))$ from \eqref{trajectory} and \eqref{traj-reflection1}, we have
$$\begin{aligned}
\int_{-\infty}^0 e^{\lambda s}\oD ^+ g (X^+(s),V^+(s)) \; ds 
&=  \int_{-\infty}^0 e^{\lambda s} \frac{d}{ds} g (X^+(s),V^+(s)) \; ds 
\\&=  g(x)  - \int_{-\infty}^0 \lambda e^{\lambda s}g (X^+(s),V^+(s)) \; ds ,\end{aligned}$$
for $C^1$ functions $g = g(x,v)$ which belong to $\text{dom}(\oD^+)$.   
Multiplying the Vlasov equations \eqref{Lap-VMsys} by $e^{\lambda s}$ and then integrating  
along the particle trajectories from $s=-\infty$ to zero, we readily obtain  
$$\begin{aligned} 
f^+(x,v)  &=  \mu^+_e \varphi  + r \mu^+_p\psi   - \mu^+_e \int_{-\infty}^0 \lambda e^{\lambda s} \varphi (R^+(s)) \; ds 
+ \int_{-\infty}^0 \lambda e^{\lambda s}\mu^+_e \hat V^+_\theta(s) \psi(R^+(s))  \; ds.
\end{aligned}$$
A similar derivation holds for the $-$ case. For convenience we denote 
$$ 
\mathcal{Q}^\pm_\lambda (g)(x,v)     
: = \int_{-\infty}^0 \lambda e^{\lambda s} g(X^\pm(s;x,v),V^\pm(s;x,v)) \; ds.    $$
In particular, by Lemma \ref{lem-reflection}, $\mathcal{Q}^\pm_\lambda(g)$ is specular on $\D\DD$ if $g$ is.  
Thus we have derived the integral representation for the particle densities:  
\begin{equation}\label{def-f}   
f^\pm(x,v)  =  \mu^\pm_e (1-\mathcal{Q}^\pm_\lambda)\varphi  
+ r \mu^\pm_p\psi   + \mu^\pm_e \mathcal{Q}^\pm_\lambda(\hat v_\theta \psi)
\end{equation}

\subsection{Operators} \label{ss-operators}
We next substitute \eqref{def-f} into the Maxwell equations \eqref{Lap-Maxwell}.   
We introduce the operators  
\begin{equation}\label{def-operators}\begin{aligned}  
\mathcal{A}_1^\lambda \varphi :& = - \Delta \varphi  -  \int \mu^+_e (1 - \mathcal{Q}^+_\lambda)\varphi \; dv 
-   \int \mu^-_e (1 - \mathcal{Q}^-_\lambda)\varphi \; dv  ,  
\\ \mathcal{A}_2^\lambda \psi :& =  (- \Delta_r + \lambda^2)  \psi  -  \int r\hat v_\theta(\mu^+_p + \mu^-_p) dv \psi  
-  \int \hat v_\theta \Big( \mu^+_e \mathcal{Q}^+_\lambda(\hat v_\theta \psi) 
+ \mu^-_e \mathcal{Q}^-_\lambda(\hat v_\theta \psi) \Big)\; dv, 
\\
\mathcal{B}^\lambda \psi : &=   -  \int \mu^+_e (1- \mathcal{Q}^+_\lambda)(\hat v_\theta \psi) \; dv 
- \int  \mu^-_e (1- \mathcal{Q}^-_\lambda)(\hat v_\theta \psi)\; dv, 
\\(\mathcal{B}^\lambda)^* \psi : &=   - \int \hat v_\theta\mu^+_e (1 - \mathcal{Q}^+_\lambda)\psi \; dv 
-   \int \hat v_\theta\mu^-_e (1 - \mathcal{Q}^-_\lambda)\psi \; dv  
.\end{aligned}                                                          \end{equation}
We also introduce 
\begin{equation}\label{def-Loperator} \mathcal{L}^\lambda: = \mathcal{A}_2^\lambda + (\mathcal{B}^\lambda)^* (\mathcal{A}_1^\lambda)^{-1} \mathcal{B}^\lambda .\end{equation}
We then have 
\begin{lemma}\label{lem-reducedVM} 
The Maxwell equations \eqref{Lap-Maxwell} are equivalent to the equations  
\begin{equation}\label{Maxwell-relations}
 \mathcal{A}_1^\lambda \varphi = \mathcal{B}^\lambda \psi ,\qquad \qquad  \mathcal{A}_2^\lambda \psi  +  (\mathcal{B}^\lambda)^* \varphi = 0.
\end{equation}
\end{lemma}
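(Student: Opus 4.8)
\emph{Proof plan.} The representation \eqref{def-f} was obtained precisely by inverting $\lambda+\oD^\pm$ in the Vlasov equations \eqref{Lap-VMsys} under the (weak) specular condition on $f^\pm$, so all the remaining content of the growing-mode problem sits in the two Maxwell equations \eqref{Lap-Maxwell}. The plan is to substitute \eqref{def-f} into \eqref{Lap-Maxwell}, integrate in $v$, and reorganize the resulting relations between $\varphi$ and $\psi$ so that they take the form \eqref{Maxwell-relations}. Every step below is an algebraic identity valid for $\varphi\in\mathcal V$, $\psi\in\mathcal V^\dagger$ with the regularity supplied by Lemma \ref{growprops}, so each implication reverses and the equivalence follows. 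The boundary data ride along automatically: the Neumann condition and the zero-mean normalization for $\varphi$ are exactly what the domain $\mathcal V$ of $\mathcal A_1^\lambda$ records, and the Dirichlet condition for $\psi$ is what $\mathcal V^\dagger$ records for $\mathcal A_2^\lambda$.

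For the first Maxwell equation, insert \eqref{def-f} into $-\Delta\varphi=\int(f^+-f^-)\,dv$ and move every term containing $\varphi$ to the left; these assemble exactly into $\mathcal A_1^\lambda\varphi$. What remains on the right is $\int r(\mu^+_p+\mu^-_p)\,dv\,\psi$ together with the two terms $\int\mu^\pm_e\mathcal Q^\pm_\lambda(\hat v_\theta\psi)\,dv$, and this has to be identified with $\mathcal B^\lambda\psi$ from \eqref{def-operators}, which instead is written using $(1-\mathcal Q^\pm_\lambda)(\hat v_\theta\psi)$. The bridge is the identity $\int(\hat v_\theta\mu^\pm_e+r\mu^\pm_p)\,dv=\int\partial_{v_\theta}[\mu^\pm]\,dv=0$, the same one used in Lemmas \ref{lem-AB0property} and \ref{lem-estJ}, which converts $\int r\mu^\pm_p\,dv\,\psi$ into $-\int\hat v_\theta\mu^\pm_e\psi\,dv$ and reveals the right side as $\mathcal B^\lambda\psi$. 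One should also check that the Neumann problem is solvable in $\mathcal V$, i.e. $\int_\DD\int(f^+-f^-)\,dv\,dx=0$; this follows because the reflecting trajectory flow preserves Lebesgue measure (Lemma \ref{lem-trajectory}) and keeps $\mu^\pm_e$ constant along orbits, so that $\int_\DD\int\mu^\pm_e\mathcal Q^\pm_\lambda(g)\,dv\,dx=\int_\DD\int\mu^\pm_e g\,dv\,dx$ for $g=\varphi$ and $g=\hat v_\theta\psi$, and one then uses the $\partial_{v_\theta}$ identity once more.

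For the second Maxwell equation, multiply $f^+-f^-$ by $\hat v_\theta$, integrate in $v$, and substitute into $(-\Delta_r+\lambda^2)\psi=\int\hat v_\theta(f^+-f^-)\,dv$. Moving the $\psi$-dependent terms to the left produces exactly $\mathcal A_2^\lambda\psi$, while the $\varphi$-dependent part on the right, namely $\int\hat v_\theta\mu^+_e(1-\mathcal Q^+_\lambda)\varphi\,dv+\int\hat v_\theta\mu^-_e(1-\mathcal Q^-_\lambda)\varphi\,dv$, equals $-(\mathcal B^\lambda)^*\varphi$ straight from the definition in \eqref{def-operators}; hence $\mathcal A_2^\lambda\psi+(\mathcal B^\lambda)^*\varphi=0$. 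Here no solvability restriction is needed because $-\Delta_r+\lambda^2$ with Dirichlet data is positive and invertible. The computation is routine; the real care is the sign bookkeeping in forming $f^+-f^-$ and, more substantively, matching the raw output of the substitution against the particular (not manifestly symmetric) forms chosen for $\mathcal B^\lambda$ and $(\mathcal B^\lambda)^*$ in \eqref{def-operators}, which is precisely where the identity $\int\partial_{v_\theta}[\mu^\pm]\,dv=0$ and the measure-preservation of the specularly reflecting flow are used. I expect the one genuinely nonformal point to be the consistency of the zero-mean solvability condition for $\varphi$.
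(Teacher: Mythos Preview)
Your proposal is correct and follows essentially the same route as the paper: substitute \eqref{def-f} into the two Maxwell equations, move the $\varphi$- and $\psi$-terms to the appropriate sides, and use the identity $\int\partial_{v_\theta}[\mu^\pm]\,dv=0$ to rewrite $\int r(\mu_p^++\mu_p^-)\,dv\,\psi$ as $-\int\hat v_\theta(\mu_e^++\mu_e^-)\psi\,dv$ so that the right side of the first equation matches the definition of $\mathcal B^\lambda\psi$. The paper's proof is just this computation; your additional check of the zero-mean solvability condition for $\varphi$ is valid but is handled separately in the paper (Lemma \ref{lem-ABlambda}) rather than within this lemma.
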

\begin{proof} 
Using  \eqref{def-f}, we  write the Poisson equation for $\varphi$ as
$$\begin{aligned} - \Delta \varphi  &= \int (f^+ - f^-) (r,v) \;dv  \\& =  \int \mu^+_e (1 - \mathcal{Q}^+_\lambda)\varphi \; dv +   \int \mu^-_e (1 - \mathcal{Q}^-_\lambda)\varphi \; dv   +  \int r(\mu^+_p + \mu^-_p) dv \psi  \\&\qquad + \int \mu^+_e \mathcal{Q}^+_\lambda(\hat v_\theta \psi)\; dv  + \int \mu^-_e \mathcal{Q}^-_\lambda(\hat v_\theta \psi)\; dv
.\end{aligned}$$
Note  that 
$\partial_{v_\theta} [\mu^\pm] = r \mu^\pm_p + \hat v_\theta \mu^\pm_e$  so that 
$$
\int r(\mu^+_p + \mu^-_p) dv  = - \int \hat v_\theta (\mu^+_e + \mu^-_e) \;dv .$$
This gives the first equation in \eqref{Maxwell-relations} by definition.  
Similarly we  write
$$\begin{aligned}  ( - \Delta_r + \lambda^2) \psi   &= \int \hat v_\theta (f^+ - f^-) (r,v) \;dv  \\& =  \int \hat v_\theta\mu^+_e (1 - \mathcal{Q}^+_\lambda)\varphi \; dv +   \int \hat v_\theta\mu^-_e (1 - \mathcal{Q}^-_\lambda)\varphi \; dv   +  \int r\hat v_\theta(\mu^+_p + \mu^-_p) dv \psi  \\&\qquad + \int \hat v_\theta \mu^+_e \mathcal{Q}^+_\lambda(\hat v_\theta \psi) \; dv + \int \hat v_\theta \mu^-_e \mathcal{Q}^-_\lambda(\hat v_\theta \psi)\; dv, 
 \end{aligned}$$
which is equivalent to the second equation in \eqref{Maxwell-relations}.
\end{proof}

As in Lemma \ref{lem-AB0property}, we now state some properties of these operators.  
We recall that the spaces $\cV$ and $\cV^\dagger$, which are defined in Section \ref{sec-spaces}, 
incorporate the boundary conditions.

\begin{lemma}\label{lem-ABlambda} 
For any $\lambda>0$,  

(i) $\mathcal{A}_1^\lambda$ is self-adjoint and positive definite on $L^2_r(\DD )$ with  domain $\mathcal{V}$. 
Moreover,  $\mathcal{A}_1^\lambda$ maps from $\mathcal{V}$  one-to-one onto 
the set $1^\perp := \{g \in L^2_r~:~ \int_\DD g \; dx =0\}$. 

(ii) $\mathcal{B}^\lambda$ is a bounded operator on $L^2_r(\DD )$ with  its adjoint operator 
$(\mathcal{B}^\lambda)^* $ defined  in \eqref{def-operators}.  
The range of $\mathcal{B}^\lambda$ is contained in $1^\perp$.  

(iii) $\mathcal{A}_2^\lambda$ and $\mathcal{L}^\lambda$ are self-adjoint on $L^2_r(\DD )$ with their 
common domain $\mathcal{V}^\dagger$. 
\end{lemma}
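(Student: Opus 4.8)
\emph{Plan.} The plan is to transcribe the proof of Lemma \ref{lem-AB0property}, with the averaging operators $\mathcal{Q}^\pm_\lambda$ taking the place of the orthogonal projections $\mathcal{P}^\pm$. Since $\mathcal{Q}^\pm_\lambda$ is no longer a projection, I would first record two facts about it. \emph{(a) Contraction on $\cH$.} By Lemma \ref{lem-trajectory} each flow map $(x,v)\mapsto(X^\pm(s;x,v),V^\pm(s;x,v))$ has Jacobian one, and $|\mu_e^\pm|$ is invariant along trajectories (it is a function of the conserved $e^\pm,p^\pm$, which moreover are even in $v_r$ and unchanged by reflection), so $g\mapsto g(X^\pm(s;\cdot),V^\pm(s;\cdot))$ is an isometry of $\cH$; averaging against $\lambda e^{\lambda s}\,ds$ over $s\le 0$, whose mass is one, gives $\|\mathcal{Q}^\pm_\lambda\|_{\cH\to\cH}\le 1$, and $\mathcal{Q}^\pm_\lambda$ preserves radial symmetry since the trajectories do. \emph{(b) A symmetry/adjointness property.} For radial $a=a(x)$, $b=b(x)$ lifted to phase space, $\int_\DD\int b\,\mu_e^\pm\,\mathcal{Q}^\pm_\lambda a\,dv\,dx=\int_\DD\int a\,\mu_e^\pm\,\mathcal{Q}^\pm_\lambda b\,dv\,dx$; more generally $\int\mu_e^\pm\mathcal{Q}^\pm_\lambda(\hat v_\theta\psi)\,dv$ and $\int\hat v_\theta\mu_e^\pm\mathcal{Q}^\pm_\lambda\psi\,dv$ are mutually adjoint on $L^2_r(\DD)$, and $\int\hat v_\theta\mu_e^\pm\mathcal{Q}^\pm_\lambda(\hat v_\theta\psi)\,dv$ is symmetric. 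The mechanism is that the parity $(x,v)\mapsto(x,\tilde v)$, $\tilde v=(-v_r,v_\theta)$, anticommutes with $\oD^\pm$ (visible from \eqref{def-opD}, and respected by the reflection rule \eqref{traj-reflection1}), hence conjugates the flow $\Phi^\pm_s$ to $\Phi^\pm_{-s}$; this yields $X^\pm(-s;x,v)=X^\pm(s;x,\tilde v)$ and $V_\theta^\pm(-s;x,v)=V_\theta^\pm(s;x,\tilde v)$. Plugging these into the $(x,v)$-change of variables \eqref{change-variable}, using invariance of $\mu_e^\pm$ and evenness of $\mu_e^\pm,\hat v_\theta$ in $v_r$, the stated identities follow by a change of variables in $(x,v)$ followed by the substitution $v\mapsto\tilde v$.

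Granting (a) and (b), part (i) repeats the proof of Lemma \ref{lem-AB0property}(i). The estimate there (using only $\sup_\DD\int|\mu_e^\pm|\,dv<\infty$ from \eqref{mu-cond} and the $\cH$-boundedness of $1-\mathcal{Q}^\pm_\lambda$) shows the integral terms of $\mathcal{A}_1^\lambda$ are bounded on $L^2_r(\DD)$, hence relatively compact with respect to $-\Delta$ since $\mathcal{V}\hookrightarrow L^2_r(\DD)$ compactly; by (b) they are symmetric; so $\mathcal{A}_1^\lambda$ is self-adjoint with domain $\mathcal{V}$. Writing $\mu_e^\pm=-|\mu_e^\pm|$ and lifting $\varphi$ gives $-\int_\DD\int\mu_e^\pm\varphi(1-\mathcal{Q}^\pm_\lambda)\varphi\,dv\,dx=\|\tilde\varphi\|_\cH^2-\langle\mathcal{Q}^\pm_\lambda\tilde\varphi,\tilde\varphi\rangle_\cH\ge0$ by (a), so $(\mathcal{A}_1^\lambda\varphi,\varphi)_{L^2}\ge\|\nabla\varphi\|_{L^2}^2$, which is $\ge c\|\varphi\|_{L^2}^2$ on $\mathcal{V}$ by the Poincar\'e inequality for zero-average functions. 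A change of variables as in (a) gives $\int_\DD\int\mu_e^\pm(1-\mathcal{Q}^\pm_\lambda)\varphi\,dv\,dx=0$, so $\mathcal{A}_1^\lambda$ maps $\mathcal{V}$ into $1^\perp$; being self-adjoint and bounded below, it is a bijection $\mathcal{V}\to 1^\perp$ with bounded inverse.

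For part (ii): the Cauchy--Schwarz bound from Lemma \ref{lem-AB0property}, $\|\int\mu_e^\pm g\,dv\|_{L^2_r}^2\le(\sup_\DD\int|\mu_e^\pm|\,dv)\|g\|_\cH^2$, together with $\|(1-\mathcal{Q}^\pm_\lambda)(\hat v_\theta\psi)\|_\cH\le 2\|\hat v_\theta\psi\|_\cH\le C\|\psi\|_{L^2_r}$, shows $\mathcal{B}^\lambda$ is bounded on $L^2_r(\DD)$; that $(\mathcal{B}^\lambda)^*$ of \eqref{def-operators} is genuinely its adjoint is precisely the adjointness half of (b) (the identity terms being trivially adjoint); and $\int_\DD\mathcal{B}^\lambda\psi\,dx=0$ by the same change of variables, so the range of $\mathcal{B}^\lambda$ lies in $1^\perp$, which is the range of $\mathcal{A}_1^\lambda$, and $(\mathcal{A}_1^\lambda)^{-1}\mathcal{B}^\lambda$ is well-defined. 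For part (iii): $\mathcal{A}_2^\lambda=(-\Delta_r+\lambda^2)+(\text{bounded self-adjoint})$, where $-\Delta_r$ on $\mathcal{V}^\dagger$ is self-adjoint as in Lemma \ref{lem-AB0property} (the $H^{2\dagger}$ device killing the $1/r^2$ singularity), multiplication by $\int r\hat v_\theta(\mu^+_p+\mu^-_p)\,dv$ is bounded and real, and the last integral term is bounded and symmetric by (b); hence $\mathcal{A}_2^\lambda$ is self-adjoint with domain $\mathcal{V}^\dagger$. Finally $(\mathcal{B}^\lambda)^*(\mathcal{A}_1^\lambda)^{-1}\mathcal{B}^\lambda$ is bounded and self-adjoint on $L^2_r(\DD)$ (because $(\mathcal{A}_1^\lambda)^{-1}$ is self-adjoint on $1^\perp$ and $(\mathcal{B}^\lambda)^*$ is the adjoint of $\mathcal{B}^\lambda$), so $\mathcal{L}^\lambda=\mathcal{A}_2^\lambda+(\mathcal{B}^\lambda)^*(\mathcal{A}_1^\lambda)^{-1}\mathcal{B}^\lambda$ is self-adjoint with the same domain $\mathcal{V}^\dagger$.

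The only genuinely new work is (b): because the $\mathcal{Q}^\pm_\lambda$ are not self-adjoint on $\cH$, one must keep careful track of how the flow, its time reversal, and the velocity reflection $v\mapsto\tilde v$ interact --- in particular that $\langle v\rangle$ (unlike $e^\pm,p^\pm$) is \emph{not} conserved along trajectories, so $\hat v_\theta$ has to be followed through the $v_\theta$-component of the reversed trajectory rather than treated as a constant. Once (a) and (b) are in place, everything else is a routine transcription of Lemma \ref{lem-AB0property}, with $\cH$-contractivity of $\mathcal{Q}^\pm_\lambda$ replacing the orthogonality of $\mathcal{P}^\pm$.
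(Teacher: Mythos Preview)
Your proposal is correct and follows essentially the same approach as the paper: the central identity \eqref{adjointP} in the paper is exactly your fact (b), proved there by precisely the mechanism you describe (change of variables along the measure-preserving flow, then the time-reversal $X^\pm(-s;x,v)=X^\pm(s;x,\tilde v)$, $V^\pm(-s;x,v)=\tilde V^\pm(s;x,\tilde v)$ induced by the parity $v\mapsto\tilde v$). The only cosmetic differences are that the paper proves \eqref{adjointP} for general specular $g,h$ rather than the specific cases you list, and it derives $\int_\DD\mathcal{B}^\lambda\psi\,dx=0$ from \eqref{adjointP} with $h=1$ and $\mathcal{Q}^\pm_\lambda 1=1$, whereas you obtain it (equally validly) by a direct change of variables.
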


\begin{proof} We first check the self-adjointness of $\mathcal{A}^\lambda_j$ and the formula for $(\mathcal{B}^\lambda)^*$.  
Since $\mu^+$ is constant on trajectories and in view of \eqref{def-operators}, it clearly suffices to prove, 
for smooth functions $g$ and $h$ specular on the boundary, that
\begin{equation}\label{adjointP}
\int_\DD  \int \mu_e^+h(x,v) \mathcal{Q}^+_\lambda (g(x,v)) \; dvdx 
= \int_\DD  \int \mu_e^+g(x,\tilde v) \mathcal{Q}^+_\lambda (h(x,\tilde v)) \; dvdx,              \end{equation}
where we denote $\tilde v = (-v_r,v_\theta)$. 
In order to prove \eqref{adjointP},  we recall the definition of $\mathcal{Q}^+_\lambda$ and  use the change of variables  
 $$(y,w):=(X^+(s;x,v),V^+(s;x,v)), \qquad (x,v) := (X^+(-s;y,w),V^+(-s;y,w)),$$
which has Jacobian one where it is defined (see Lemma \ref{lem-trajectory}).  
So we can write the left side of \eqref{adjointP} as  
$$\begin{aligned}
\int_{-\infty}^0\int_\DD  \int \lambda &e^{\lambda s}\mu_e^+h(x,v)\  g(X^+(s;x,v),V^+(s;x,v)) \; dvdxds
\\ &= \int_{-\infty}^0\int_\DD  \int \lambda e^{\lambda s}\mu_e^+h(X^+(-s;y,w),V^+(-s;y,w))\  g(y,w) \; dwdyds.
\end{aligned}$$
Observe that the characteristics defined by  \eqref{trajectory} and \eqref{traj-reflection1} 
are invariant under the time-reversal transformation   
 $s\mapsto -s$, $r \mapsto r$, $v_r\mapsto -v_r$, and $v_\theta \mapsto v_\theta$.  
Thus   
$$ X^+(-s;x,v) = X^+(s;x,\tilde v), \qquad V^+(-s;x,v) = \tilde V^+ (s;x,\tilde v), $$
at least if we avoid the boundary.   Using this invariance, we obtain 
$$\begin{aligned}
\int_{-\infty}^0\int_\DD  \int \lambda &e^{\lambda s}\mu_e^+h(x,v)\ g(X^+(s;x,v),V^+(s;x,v)) \; dvdxds
\\ &= \int_{-\infty}^0\int_\DD  \int \lambda e^{\lambda s}\mu_e^+h(X^+(s;y,\tilde w),\tilde V^+(s;y,\tilde w))\ g(y,w) \; dwdyds
\\ &= \int_{-\infty}^0\int_\DD  \int \lambda e^{\lambda s}\mu_e^+h(X^+(s;x,v),\tilde V^+(s;x,v))\ g(x,\tilde v) \; dvdxds,
\end{aligned}$$
in which the last identity comes from the change of notation   $(x,v): = (y,\tilde w)$.  
By  definition of $\mathcal{Q}^+_\lambda$, this result is precisely the identity \eqref{adjointP}.   
A similar calculation holds for the $-$ case.  This proves the adjoint properties claimed in the lemma. 
\bigskip

Next we show that all the integral terms in \eqref{def-operators} are bounded operators on $L^2_r(\DD )$.  
For instance, we have 
\begin{equation}\label{est-Plambda}\begin{aligned}
\Big| \int_\DD  \int &\mu_e^+\psi \mathcal{Q}^+_\lambda\varphi \; dvdx\Big| 
 = \Big| \int_{-\infty}^0\int_\DD  \int \lambda e^{\lambda s}\mu_e^+\psi \varphi(X^+(s)) \; dvdxds\Big|      \\&
\le \Big(\int_{-\infty}^0\lambda e^{\lambda s} \int_\DD  \int |\mu_e^+||\psi|^2\; dvdxds\Big)^{1/2}   
\Big(\int_{-\infty}^0\lambda e^{\lambda s}\int_\DD  \int |\mu_e^+| |\varphi(X^+(s))|^2 \; dvdxds\Big)^{1/2}     \\&
\le \sup_\DD \Big(\int |\mu_e^+| \; dv\Big) \|\psi \|_{L^2_\DD}\|\varphi \|_{L^2_\DD} .  
 \end{aligned}
\end{equation}
In the last step we  made the change of variables $(x,v) = (X^+(s;x,v),V^+(s;x,v))$ in the integral for $\varphi$, which is possible thanks to \eqref{change-variable}.  
Similar estimates hold for the other integrals since $\hat v_\theta$ is bounded by one.  
This proves that $\mathcal{B}^\lambda$ is bounded on $L^2_r(\DD)$ and also that the integral 
terms in $\mathcal{A}_1^\lambda$ and $\mathcal{A}_2^\lambda$ are relatively compact 
with respect to $-\Delta$ and $-\Delta_r$, respectively.  
Therefore $\mathcal{A}_1^\lambda$ and $\mathcal{A}_2^\lambda$ are well-defined operators on $L^2_r(\DD )$ 
with  domains $\mathcal{V}$ and $\mathcal{V}^\dagger$, which are the same as the 
domains of $-\Delta$ and $-\Delta_r$, respectively. 

\bigskip
Taking $\psi = \varphi$ in the previous estimate, we have  
 $$
 \Big| \int_\DD  \int \mu_e^+\varphi \mathcal{Q}^+_\lambda\varphi \; dvdx\Big| 
\le \int_{-\infty}^0\lambda e^{\lambda s} \int_\DD  \int |\mu_e^+||\varphi|^2\; dvdx =  -\int_\DD  \int \mu_e^+|\varphi|^2\; dvdx $$
so that 
$$  
-\int_\DD  \int \mu^+_e \varphi \ (1 - \mathcal{Q}^+_\lambda)\varphi \; dv   
\ge 0.  $$
Thus  $\mathcal{A}_1^\lambda \ge 0$, and $\mathcal{A}_1^\lambda\varphi =0$ if and only if $\varphi$ is a constant.  
Since $\mathcal{A}_1^\lambda$ has discrete spectrum, it is invertible on the set orthogonal  to the 
kernel of  $\mathcal{A}_1^\lambda$.   That is, it is invertible on $\{g \in \mathcal{V}~:~ \int_\DD g \; dx =0\}$.  
For the invertibility of $\mathcal{A}_1^\lambda$ on the range of $\mathcal{B}^\lambda$, 
we note by \eqref{adjointP} and  $ \mathcal{Q}^\pm_\lambda(1)=1$ that   
$$
\int_\DD\int \mu_e^\pm(1- \mathcal{Q}^\pm_\lambda)(\hat v_\theta \psi) \; dvdx 
= \int_\DD\int \mu_e^\pm \hat v_\theta \psi (1- \mathcal{Q}^\pm_\lambda)(1) \; dvdx= 0.$$ 
This shows that $\mathcal{B}^\lambda\psi \in \{g \in \mathcal{V}~:~ \int_\DD g \; dx =0\}$  for all $\psi$.   
Thus $(\mathcal{A}_1^\lambda)^{-1}$ is well-defined on the range of $\mathcal{B}^\lambda$, 
and so $\mathcal{L}^\lambda$ is well-defined. 
The self-adjoint property of $\mathcal{L}^\lambda$ is clear from 
that of $\mathcal{A}_1^\lambda$. 
\end{proof}

Part {\it (i)} of Lemma \ref{lem-ABlambda} in particular shows that for each $\psi \in L^2_r(\DD)$ 
there exists a unique radial function $\varphi \in H^2_r(\DD)$ that solves  
\begin{equation}\label{phi-def}
\mathcal{A}_1^\lambda \varphi = \mathcal{B}^\lambda \psi, \qquad \int_\DD \varphi \; dx =0, \qquad \partial_r \varphi(1) =0.\end{equation}

\subsection{Construction of a growing mode}\label{ss-constr}
\begin{lemma} \label{growing psi}
If $\mathcal{L}^0 \not \ge 0$, then there exist a $\lambda>0$ and a nonzero function $\psi \in H^{2\dagger}(\DD)$ such that $\mathcal{L}^\lambda\psi =0$ and $\psi$ satisfies the Dirichlet condition on the boundary. 
\end{lemma}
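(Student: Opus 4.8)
The plan is to run a continuity-in-$\lambda$ argument on the bottom of the spectrum of $\mathcal{L}^\lambda$, in the spirit of Lin \cite{Lin}: show that $\mathcal{L}^\lambda$ is positive for $\lambda$ large, not positive for $\lambda$ small (because $\mathcal{L}^0\not\ge0$), and that the smallest eigenvalue moves continuously, so it must vanish somewhere. The first, and crucial, step is to pin down the family $\mathcal{Q}^\pm_\lambda$. Since $\oD^\pm$ is skew-adjoint on $\cH$ (subsection \ref{ss-Vlasov}), the trajectory integral in subsection \ref{ss-rep} shows that the bounded operator $\mathcal{Q}^\pm_\lambda$ is exactly $\lambda(\lambda+\oD^\pm)^{-1}$ on $\cH$, with $\|\mathcal{Q}^\pm_\lambda\|_{\cH\to\cH}\le1$ for every $\lambda>0$; here one must check, using Lemma \ref{lem-reflection} and the trajectory lemmas, that the trajectory integral does land in $\text{dom}(\oD^\pm)$ and solves $(\lambda+\oD^\pm)f=g$, so that injectivity of $\lambda+\oD^\pm$ forces the identification. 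Because resolvents are norm-analytic on the resolvent set and $(0,\infty)$ lies in the resolvent set of the skew-adjoint $\oD^\pm$, the map $\lambda\mapsto\mathcal{Q}^\pm_\lambda$ is norm-continuous on $(0,\infty)$; and the spectral calculus for $\oD^\pm$ (equivalently, the mean ergodic theorem) gives $\mathcal{Q}^\pm_\lambda\to\mathcal{P}^\pm$ strongly on $\cH$ as $\lambda\to0^+$.

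Granting this, the operators $\mathcal{A}^\lambda_1,\mathcal{B}^\lambda,\mathcal{A}^\lambda_2$ inherit the regularity: their integral parts are bounded on $L^2_r(\DD)$ uniformly in $\lambda$ and norm-continuous in $\lambda\in(0,\infty)$; since $\mathcal{A}_1^\lambda\ge-\Delta$, which is bounded below by a fixed positive constant on $1^\perp$ (Poincaré), $(\mathcal{A}_1^\lambda)^{-1}$ on $1^\perp$ is uniformly bounded and norm-continuous on $(0,\infty)$, and converges appropriately as $\lambda\to0^+$. Hence $\mathcal{L}^\lambda=-\Delta_r+\lambda^2+K(\lambda)$, where $K(\lambda)$ is a norm-continuous family of bounded self-adjoint operators; as $-\Delta_r$ on $\mathcal{V}^\dagger$ has compact resolvent, each $\mathcal{L}^\lambda$ is self-adjoint with discrete spectrum bounded below, and its lowest eigenvalue $\kappa(\lambda):=\inf_{0\ne\psi\in\mathcal{V}^\dagger}(\mathcal{L}^\lambda\psi,\psi)/\|\psi\|_{L^2}^2$ depends continuously on $\lambda\in(0,\infty)$ and is attained.

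Next I would locate a sign change of $\kappa$. Using $(\mathcal{B}^\lambda)^*(\mathcal{A}_1^\lambda)^{-1}\mathcal{B}^\lambda\ge0$ and the fact that the zeroth-order terms of $\mathcal{A}_2^\lambda$ are bounded by a constant $C_0$ independent of $\lambda$, one gets $(\mathcal{L}^\lambda\psi,\psi)\ge(\mathcal{A}_2^\lambda\psi,\psi)\ge(\lambda^2-C_0)\|\psi\|_{L^2}^2$, so $\kappa(\lambda)>0$ once $\lambda>\sqrt{C_0}$. On the other hand, $\mathcal{L}^0\not\ge0$ gives a fixed $\psi_0\in\mathcal{V}^\dagger$ with $(\mathcal{L}^0\psi_0,\psi_0)<0$; since $\mathcal{Q}^\pm_\lambda\to\mathcal{P}^\pm$ strongly, the quadratic form $\lambda\mapsto(\mathcal{L}^\lambda\psi_0,\psi_0)$ is continuous at $\lambda=0^+$, so $(\mathcal{L}^{\lambda_0}\psi_0,\psi_0)<0$ for some small $\lambda_0>0$, whence $\kappa(\lambda_0)<0$. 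Applying the intermediate value theorem to the continuous function $\kappa$ on the interval $[\lambda_0,\sqrt{C_0}+1]\subset(0,\infty)$ yields a $\lambda_*$ with $\kappa(\lambda_*)=0$. Since $\mathcal{L}^{\lambda_*}$ has discrete spectrum and $0=\kappa(\lambda_*)$ is the bottom of it, $0$ is an eigenvalue, so there is a nonzero $\psi\in\mathcal{V}^\dagger\subset H^{2\dagger}(\DD)$ with $\mathcal{L}^{\lambda_*}\psi=0$, and $\psi$ satisfies the Dirichlet condition because it lies in $\mathcal{V}^\dagger$.

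The main obstacle, deserving the most care, is Step 1: the identification $\mathcal{Q}^\pm_\lambda=\lambda(\lambda+\oD^\pm)^{-1}$ and the strong limit $\mathcal{Q}^\pm_\lambda\to\mathcal{P}^\pm$ as $\lambda\to0^+$. Both rest on handling the countably many specular reflections of the trajectories correctly — verifying that the Duhamel-type integral defining $\mathcal{Q}^\pm_\lambda g$ genuinely represents the resolvent of the skew-adjoint operator $\oD^\pm$ (weak specular condition included), after which the ergodic limit is automatic from spectral calculus. The remaining ingredients — uniform bounds, norm-continuity of $K(\lambda)$ on $(0,\infty)$, compactness of the resolvent of $-\Delta_r$ on $\mathcal{V}^\dagger$, and the intermediate-value argument — are routine once this is in place.
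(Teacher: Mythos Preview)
Your proposal is correct and follows the same three-step continuity strategy as the paper: positivity of $\mathcal{L}^\lambda$ for large $\lambda$, non-positivity for small $\lambda$ via strong convergence $\mathcal{Q}^\pm_\lambda\to\mathcal{P}^\pm$, and continuity of the lowest eigenvalue $\kappa(\lambda)$ on $(0,\infty)$. The only difference is in how the technical ingredients are obtained: the paper proves continuity by a direct hands-on estimate $|\langle(\mathcal{L}^\lambda-\mathcal{L}^\mu)\psi,\psi\rangle|\le C(|\lambda-\mu|+|\log\lambda-\log\mu|)\|\psi\|_{L^2}^2$ computed from the trajectory integral, and cites \cite[Lemma~2.6]{LS1} for the strong limit as $\lambda\to0^+$, whereas you recast $\mathcal{Q}^\pm_\lambda$ as the resolvent $\lambda(\lambda+\oD^\pm)^{-1}$ of the skew-adjoint operator and then invoke norm-analyticity of resolvents for the continuity and the spectral/mean-ergodic theorem for the limit. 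This is a slightly more abstract packaging of the same facts; it buys you cleanliness (no explicit $\log$ estimate) at the cost of having to verify carefully---as you note---that the trajectory integral with reflections really does represent the abstract resolvent on $\cH$.
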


\begin{proof} The proof is similar to the one and a half dimensional case given in \cite{LS1} 
so that we merely outline the main steps as follows.  

\noindent  {\it (i)} $\mathcal{L}^\lambda \ge 0$ for large $\lambda$. 
 
\noindent {\it (ii)}  For all $\psi \in L^2_r$, $\mathcal{L}^\lambda \psi$ converges strongly to $\mathcal{L}^0\psi$ in $L^2$ 
as $\lambda \to 0$, and thus $\mathcal{L}^\lambda \not \ge 0$ when $\lambda$ is small.  

\noindent {\it (iii)} The smallest eigenvalue 
$ \kappa^\lambda: = \inf _{\psi} ~\langle \mathcal{L}^\lambda \psi, \psi \rangle_{L^2}$ 
 of $\mathcal{L}^\lambda$ is continuous in $\lambda>0$, 
where the infimum is taken over $\psi \in \mathcal{V}^\dagger$ with $\|\psi\|_{L^2} =1$. 

These three steps imply that $\kappa^\lambda$ must be zero for some $\lambda>0$, from which the lemma follows. 
To prove  {\it (i)}, it is easy to see that $\mathcal{L}^\lambda$ is nonnegative for large $\lambda$, 
since $(\mathcal{B}^\lambda)^* (\mathcal{A}^\lambda_1)^{-1} \mathcal{B}^\lambda \ge 0$ 
and $\langle \mathcal{A}_2^\lambda \psi,\psi\rangle_{L^2}$ is sufficiently large when $\lambda$ is large. 
As for {\it(ii)}, to show the convergence of $\mathcal{L}^\lambda$ to $\mathcal{L}^0$ as $\lambda \to 0$, 
we use the remarkable fact, proved in \cite[Lemma 2.6]{LS1}, that for all $g \in \cH$  the strong limit  
\begin{equation}\label{conv-proj} 
\lim_{\lambda \to 0^+} \mathcal{Q}^\pm_\lambda g = \mathcal{P}^\pm g\end{equation}
is valid in the $L^2_{|\mu_e^\pm|}=\cH$ norm. 
Here the  $\mathcal{P}^\pm$ are the orthogonal projections of $L^2_{|\mu_e^\pm|}$ onto the kernels of $\oD ^\pm$. 
However, it should be noted that  the convergence is not true in the operator norm.  
For all $\psi \in L^2_r$, we use \eqref{conv-proj} and write 
$$ \mathcal{A}_2^\lambda \psi - \mathcal{A}_2^0\psi 
= \lambda^2 \psi - \sum _\pm \int \hat v_\theta \mu_e^\pm \Big[\mathcal{Q}^\pm_\lambda(\hat v_\theta \psi) 
- \mathcal{P}^\pm(\hat v_\theta \psi)\Big]\; dv,$$
thereby obtaining  the convergence of $\mathcal{A}_2^\lambda \psi$ to  $\mathcal{A}_2^0\psi $ in $L^2$. 
Similarly, $\mathcal{A}_1^\lambda$ and $\mathcal{B}^\lambda$ also converge strongly in $L^2$ to  $\mathcal{A}_1^0$ 
and $\mathcal{B}^0$, respectively, and so does $\mathcal{L}^\lambda$ to $\mathcal{L}^0$. 
Finally, estimates  similar to \eqref{est-Plambda}
yield
$$\begin{aligned}
\Big| \int_\DD  \int \hat v_\theta\mu_e^+\psi &( \mathcal{Q}^+_\lambda(\hat v_\theta \psi) - \mathcal{P}^+_\mu(\hat v_\theta \psi)) \; dvdx\Big|  
\\&= \Big| \int_{-\infty}^0\int_\DD  \int \Big(\lambda e^{\lambda s} - \mu e^{\mu s}\Big)\hat v_\theta \mu_e^+\psi \hat V_\theta(s) \psi(X^+(s)) \; dvdxds\Big|
\\&\le  \int_{-\infty}^0 |\lambda e^{\lambda s} - \mu e^{\mu s}|\Big( \int_\DD  \int |\mu_e^+||\psi(x)|^2\; dvdx \Big)^{1/2} \Big( \int_\DD \int |\mu^+_e ||\psi(X^+(s)) |^2\; dvdx\Big)^{1/2} ds
\\&
\le C_0\Big(\int_{-\infty}^0|\lambda e^{\lambda s} - \mu e^{\mu s}|\; ds \Big) \|\psi \|_{L^2_\DD}^2
\le C_0 |\log \lambda - \log \mu|\|\psi \|_{L^2_\DD}^2
 ,\end{aligned}     $$
and thus 
$$ 
\langle \mathcal{A}_2^\lambda \psi - \mathcal{A}_2^\mu\psi, \psi \rangle    
\le C_0 \Big( |\lambda - \mu| + |\log \lambda - \log \mu|\Big)\|\psi \|_{L^2_\DD}^2       $$ 
for all $\lambda,\mu>0$ and $\psi \in L^2_r$. Similarly, we obtain the same estimate for $\mathcal{L}^\lambda$, which proves the continuity of the lowest eigenvalue $\kappa^\lambda$ of $\mathcal{L}^\lambda$.   
\end{proof}

Using $\psi$, we can now construct the growing mode.  
\begin{lemma}\label{lem-growingmode} 
Let  $\lambda,\psi$ be as in Lemma \ref{growing psi},  let $\varphi$ be as in \eqref{phi-def}, 
and let $f^\pm$ be defined by \eqref{def-f}.  
Then $(e^{\lambda t}f^\pm,e^{\lambda t}\varphi,e^{\lambda t}\psi)$ is a growing mode of the linearized Vlasov-Maxwell 
system. 
\end{lemma}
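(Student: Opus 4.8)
The plan is to verify, one by one, the defining properties of a growing mode for the triple $(e^{\lambda t}f^\pm,e^{\lambda t}\vE,e^{\lambda t}\vB)$, with $\vE,\vB$ obtained from $(\varphi,\psi)$ through \eqref{def-EB} and $\dt\psi$ replaced by $\lambda\psi$: (a) the Maxwell equations \eqref{Lap-Maxwell}; (b) the Vlasov equations \eqref{Lap-VMsys}, together with $f^\pm\in\mathrm{dom}(\oD^\pm)$; (c) $f^\pm\in\cH^\pm$, $\vE,\vB\in L^2(\DD)$, and the electric/magnetic boundary conditions. Part (a) is immediate from the construction: \eqref{phi-def} gives $\mathcal{A}_1^\lambda\varphi=\mathcal{B}^\lambda\psi$ and $\varphi=(\mathcal{A}_1^\lambda)^{-1}\mathcal{B}^\lambda\psi$, so the hypothesis $\mathcal{L}^\lambda\psi=0$ becomes $\mathcal{A}_2^\lambda\psi+(\mathcal{B}^\lambda)^*\varphi=0$; by Lemma \ref{lem-reducedVM} the two relations \eqref{Maxwell-relations} are equivalent to \eqref{Lap-Maxwell}. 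The Neumann condition on $\varphi$ and the Dirichlet condition on $\psi$ are built into \eqref{phi-def} and Lemma \ref{growing psi}; since $\varphi\in\mathcal{V}\subset H^2_r(\DD)$ and $\psi\in\mathcal{V}^\dagger\subset H^{2\dagger}(\DD)$ one gets $\vE,\vB\in H^1(\DD)\subset L^2(\DD)$, and all objects are radial since $\varphi,\psi$ are and the equilibrium trajectories preserve cylindrical symmetry (Lemma \ref{lem-trajectory}).

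Part (b) is the heart of the matter. It rests on the operator identity
\[
(\lambda+\oD^\pm)\,\mathcal{Q}^\pm_\lambda(g)=\lambda g
\]
for radial $g=g(x,v)$ that is specular on $\D\DD$. For $C^1$ data it holds because, by Lemma \ref{lem-trajectory}, the map $s\mapsto g(X^\pm(s;x,v),V^\pm(s;x,v))$ is piecewise $C^1$ and $\oD^\pm$ (differentiating in $(x,v)$) acts on it as $\frac{d}{ds}$ along the flow, while by Lemma \ref{lem-reflection} this map is \emph{continuous} across the countably many reflection times; integrating $\int_{-\infty}^0\lambda e^{\lambda s}\frac{d}{ds}\big(g\circ(X^\pm,V^\pm)\big)\,ds$ by parts in $s$ then yields the identity — the interior boundary contributions at the reflection times cancel by continuity, and the term at $s=-\infty$ vanishes because $\varphi,\psi\in H^2(\DD)\hookrightarrow C(\overline\DD)$ stay bounded along trajectories. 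Feeding $g=\varphi$ and $g=\hat v_\theta\psi$ into \eqref{def-f}, using $\mu_e^\pm,\mu_p^\pm\in\ker\oD^\pm$ together with $\oD^\pm\varphi=\hat v_r\D_r\varphi$ and $\oD^\pm(r\psi)=\hat v_r\D_r(r\psi)$ for radial potentials, and collecting terms reproduces exactly the right-hand sides of \eqref{Lap-VMsys}; this is just the computation that produced \eqref{def-f}, now run backwards. To pass from $C^1$ data to $\varphi\in H^2_r$, $\psi\in H^{2\dagger}$, one approximates $(\varphi,\psi)$ in $H^2$ by smooth radial functions with the same boundary conditions, forms the corresponding densities $f^\pm_n$ by \eqref{def-f} (for which the identity above applies and which are specular by Lemma \ref{lem-reflection}), and passes to the limit in $\cH^\pm$ using the weighted bounds for $\mathcal{Q}^\pm_\lambda$ — the Sobolev pointwise bound together with \eqref{est-Plambda} and the change of variables \eqref{change-variable}. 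Since each $f^\pm_n$ is specular, Lemma \ref{lem-on-D} gives $\langle\oD^\pm f^\pm_n,h\rangle_\cH=-\langle f^\pm_n,\oD^\pm h\rangle_\cH$ for every $h\in\mathcal{C}$, an identity that survives the limit; hence $f^\pm\in\mathrm{dom}(\oD^\pm)$ — the specular boundary condition holds in the weak sense \eqref{domainD} — and \eqref{Lap-VMsys} holds weakly.

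For part (c), $f^\pm\in\cH^\pm$ is read directly off \eqref{def-f}: the decay hypothesis \eqref{mu-cond} on $\mu^\pm$ and the weighted boundedness of $\mathcal{Q}^\pm_\lambda$ (again \eqref{est-Plambda}, \eqref{change-variable}) give $\iint_{\DD\times\RR^2}|f^\pm|^2/|\mu_e^\pm|\,dvdx<\infty$. Together with $\vE,\vB\in L^2(\DD)$ and $\R\lambda=\lambda>0$, this is precisely the definition of a growing mode from Section \ref{sec-spaces}, so the proof is complete.

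I expect the main obstacle to be the rigorous, weak justification of the identity $(\lambda+\oD^\pm)\mathcal{Q}^\pm_\lambda(g)=\lambda g$ in the presence of specular reflections: one must make sure that differentiating $\mathcal{Q}^\pm_\lambda$ along the broken trajectories, and the subsequent $s$-integration by parts, produce no spurious terms at the countably many reflection times — this is exactly where the continuity and specularity-preservation of Lemma \ref{lem-reflection} are indispensable — and that the resulting $f^\pm$ genuinely lies in $\mathrm{dom}(\oD^\pm)$, so that the weak notions of $\oD^\pm$ and of the specular condition \eqref{domainD} are the appropriate ones rather than a purely formal calculation. Everything else is bookkeeping already supplied by Lemmas \ref{lem-trajectory}, \ref{lem-reflection}, \ref{lem-reducedVM}, \ref{lem-ABlambda} and the constructions \eqref{phi-def}, \eqref{def-f}.
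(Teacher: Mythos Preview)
Your treatment of the Vlasov equations (via the identity $(\lambda+\oD^\pm)\mathcal{Q}^\pm_\lambda g=\lambda g$ and approximation), of the specular condition for $f^\pm$, and of the two Maxwell equations encoded in \eqref{Lap-Maxwell} is essentially the same as the paper's, just organized slightly differently (the paper differentiates $f^+(X^+(t),V^+(t))$ in $t$ and lets $t\to0^+$, which is the same computation unwound).

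There is, however, a genuine gap: you never verify the \emph{middle} Maxwell equation in \eqref{Maxwell-eqs}, namely $\lambda\,\partial_r\varphi=j_r$. The system \eqref{Lap-Maxwell} that you obtain from Lemma~\ref{lem-reducedVM} comprises only the Poisson equation and the $\psi$-equation; the constraint $\dt\partial_r\varphi=j_r$ is an independent relation that is \emph{not} a consequence of \eqref{Maxwell-relations} alone. The paper checks it separately: integrating the Vlasov equations in $v$ gives the continuity identity $\lambda\rho+\nabla\cdot\vj=0$, which combined with $-\Delta\varphi=\rho$ yields $(\partial_r+\tfrac1r)(\lambda\partial_r\varphi-j_r)=0$, so $r(\lambda\partial_r\varphi-j_r)$ is constant; then the Neumann condition $\partial_r\varphi(1)=0$ together with $j_r(1)=0$ (a consequence of the specular condition, since $\hat v_r f^\pm$ is odd in $v_r$ at $r=1$) forces the constant to vanish. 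Without this step your triple is not yet shown to solve the full linearized system, so it is not yet a growing mode.
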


\begin{proof}  
Because $\mathcal L^\lambda\psi=0$ and due to the definition of $\varphi$,  both parts of \eqref{Maxwell-relations} 
are satisfied.  Therefore \eqref{Lap-Maxwell}  is satisfied.  
These are the first and third Maxwell equations in \eqref{Maxwell-eqs} 
together with the boundary conditions for $\varphi$ and $\psi$. 
Next, the specular boundary condition for $f^\pm$ follows directly by definition \eqref{def-f} 
and the fact that $\mathcal{Q}^\lambda(g)$ is specular on the boundary if $g$ is.  
It remains to check the Vlasov equations and the middle Maxwell equation in \eqref{Maxwell-eqs}, namely   
 $\lambda\partial_r \varphi = j_r  $.

We begin with the equation for $f^+$. 
Recall that $  (X^+(t;x,v),V^+(t;x,v))$
 is the particle trajectory  initiating from $(x,v)$. Evaluating \eqref{def-f} along the trajectory, we have  
$$\begin{aligned}
 f^+(X^+(t),V^+(t))  &=  \mu^+_e \varphi (X^+(t))  +  \mu^+_p R^+(t)\psi(X^+(t))  
 - \mu^+_e \int_{-\infty}^0 \lambda e^{\lambda s} \varphi (X^+(s; X^+(t),V^+(t))) \; ds \\&\qquad 
 +  \mu^+_e \int_{-\infty}^0 \lambda e^{\lambda s} \hat V^+_\theta (s;X^+(t),V^+(t))\psi (X^+(s;X^+(t),V^+(t))) \; ds.
 \end{aligned}$$
By the group property $(X^+(s; X^+(t),V^+(t)) = X^+(s+t)$ and $V^+(s; X^+(t),V^+(t)) = V^+(s+t),$ together with integration by parts in $s$, 
we have for each $t$
$$\begin{aligned}
 f^+(X^+(t),V^+(t))  &=  \mu^+_e \varphi (X^+(t)) +  \mu^+_p R^+(t)\psi(X^+(t))  \\&\qquad 
 - \mu^+_e \int_{-\infty}^0 \lambda e^{\lambda s} \Big[\varphi (X^+(s+t))  - \hat V^+_\theta (s+t)\psi (X^+(s+t)) \Big] \; ds
 \\
  &=   \mu^+_p R^+(t)\psi(X^+(t))  + \mu^+_e e^{-\lambda t} \int_{-\infty}^t e^{\lambda s} \Big[\D_s \varphi (X^+(s) ) 
  + \lambda \hat V^+_\theta (s)\psi (X^+(s)) \Big] \; ds.
  \end{aligned}$$
Differentiation of this identity yields 
$$\begin{aligned}
\frac{d}{dt} \Big( e^{\lambda t}f^+(X^+(t),V^+(t))\Big)&=   \mu^+_p \frac{d}{dt}\Big(e^{\lambda t}R^+(t)\psi(X^+(t))\Big) + \mu^+_e e^{\lambda t} \Big[\D_t \varphi (X^+(t) ) + \lambda \hat V^+_\theta (t)\psi (X^+(t)) \Big] .
  \end{aligned}$$
  We evaluate the above identity for $t\in (0,\epsilon)$ and let $\epsilon \to 0$.  
Note that by Lemma \ref{lem-trajectory} the functions $f^+(X^+(t),V^+(t))$, $\varphi(X^+(t))$ and $\psi(X^+(t))$ 
are piecewise $C^1$ smooth.   
Using the evolution \eqref{trajectory} and \eqref{traj-reflection1}, we obtain 
$$\begin{aligned}
\lambda f^+ + \oD ^+f^+  
=  \mu^+_e \hat v_r\D_r \varphi + r\mu^+_p  \hat v_r\D_r \psi + \mu^+_p \hat v_r \psi + \lambda( \mu^+_e \hat v_\theta + r\mu^+_p) \psi  .  
  \end{aligned}$$
This is the Vlasov equation \eqref{lin-VM} for $f^+$. A similar verification can be done for $f^-$. 

\bigskip

Finally, we verify the remaining Maxwell equation   $\lambda\partial_r \varphi = j_r  $.  
Indeed, by performing the integration in $v$ of the Vlasov equations \eqref{linearization}, 
we easily obtain $\lambda \rho  + \nabla \cdot \vj =0$. 
Together with the Poisson equation in \eqref{Lap-Maxwell}, this yields 
$$ - \Big(\partial_r + \frac 1r \Big) (\lambda \partial_r \varphi)  = -\lambda \Delta \varphi = \lambda \rho = - \Big(\partial_r + \frac 1r \Big) j_r$$
Thus $r(\lambda \partial_r \varphi - j_r)$ must be a constant.     However, at the boundary $r=1$  
we have $\partial_r \varphi=0$ and $j_r=0$ by the specular boundary condition on $f^\pm$. 
So $\lambda \partial_r \varphi - j_r=0$.  
\end{proof}
This completes the proof of Theorem \ref{theo-main}.

\section{Examples} \label{sec-examples}
The purpose of this section is to exhibit some explicit examples of stable and unstable equilibria, 
and thereby prove Theorem \ref{theo-examples}. 

%
%

\subsection{Stable examples}
By Theorem \ref{theo-main} the  condition for spectral stability is 
\begin{equation}\label{stabcond-L0}\mathcal{L}^0  
= \mathcal{A}_2^0 + (\mathcal{B}^0)^* (\mathcal{A}_1^0)^{-1} \mathcal{B}^0 \ge 0.\end{equation}
For each $\psi$ in the domain of $\mathcal{L}^0$ (thus in particular satisfying the Dirichlet boundary condition), we have 
$$
\langle \mathcal{L}^0 \psi,\psi\rangle_{L^2} = \langle \mathcal{A}_2^0 \psi,\psi\rangle_{L^2} + \langle \mathcal{A}_1^0\varphi,\varphi\rangle_{L^2} ,
$$
where $\varphi$ solves $\mathcal{A}_1^0\varphi = B^0\psi$ with the Neumann boundary condition on $\varphi$. 
Recall that 
$$\begin{aligned}  
\mathcal{A}_1^0 \varphi & = - \Delta \varphi  - \int \mu^+_e(1-\mathcal{P}^+) \varphi \; dv 
- \int \mu^-_e (1-\mathcal{P}^-) \varphi \; dv,  \\
 \mathcal{A}_2^0 \psi & = \Big( - \Delta + \frac {1}{r^2}\Big)  \psi   
 -  \int r\hat v_\theta (\mu_p^+ + \mu_p^-) \; dv \psi  
 -   \int \hat v_\theta\Big(\mu_e^+ \mathcal{P}^+(\hat v_\theta \psi) +\mu_e^- \mathcal{P}^-(\hat v_\theta \psi) \Big)\; dv, 
\end{aligned}$$
in which $\mu^\pm$ denote $\mu^\pm(e^\pm,p^\pm) = \mu^\pm(\langle v \rangle  \pm \varphi^0,r(v_\theta \pm \psi^0))$. 
Integration by parts, together with the boundary conditions for $\varphi$ and $\psi$,  
and the orthogonality of $\mathcal{P}^\pm$ and $1-\mathcal{P}^\pm$ 
(separately for $+$ and $-$)  lead to the expressions 
\begin{equation}\label{L2norm-A012}\begin{aligned}
\langle \mathcal{A}_1^0\varphi,\varphi\rangle_{L^2} 
 &= \int_\DD|\D_r \varphi|^2\; dx  -  \int_\DD\int \mu_e^+ |(1-\mathcal{P}^+)(\varphi)|^2 \; dvdx -  \int_\DD\int \mu_e^- |(1-\mathcal{P}^-)(\varphi)|^2 \; dvdx,\\
 \langle \mathcal{A}_2^0 \psi,\psi\rangle_{L^2} &=\int_\DD \Big(|\D_r\psi|^2 + \frac 1{r^2}|\psi|^2 \Big) \; dx  -  \int _\DD \Big( \int r \hat v_\theta (\mu_p^+ + \mu_p^-) \; dv\Big)  |\psi|^2\; dx  
 \\&\quad-  \int_\DD\int \Big[\mu_e^+ |\mathcal{P}^+(\hat v_\theta \psi)|^2 + \mu_e^- |\mathcal{P}^-(\hat v_\theta \psi)|^2\Big] \; dvdx .
\end{aligned}\end{equation}
Due to 
 the assumption $\mu_e^\pm < 0$, it is clear that $\langle \mathcal{A}_1^0\varphi,\varphi\rangle_{L^2} \ge 0$ and so are the first and last terms in $ \langle \mathcal{A}_2^0 \psi,\psi\rangle_{L^2}$.  
We now  exhibit two explicit sufficient conditions for  $ \langle \mathcal{A}_2^0 \psi,\psi\rangle_{L^2}$  to be nonnegative.  
 This is Theorem \ref{theo-examples} {\it(i)} and {\it (ii)}.   
\begin{theorem}\label{lem-stab} 
Let $(\mu^\pm,\varphi^0,\psi^0)$ be an inhomogenous equilibrium.  

(i) If 
\begin{equation}\label{suff-stab}
p \mu^\pm_p(e,p)\le 0, \qquad \quad \forall ~ e,p, 
 \end{equation} 
then the equilibrium is spectrally stable provided that $\varphi^0\in L^\infty$  and $\psi^0$ is sufficiently small in $L^\infty$. 

(ii) If 
\begin{equation}\label {suff-stab2}
|\mu_p^\pm (e,p)|  \le  \frac \epsilon {1+|e|^\gamma} , \end{equation}
for some $\gamma>2$, with $\epsilon$ sufficiently small and $\varphi^0=0$ but $\psi^0$  not necessarily small, 
then the equilibrium is spectrally stable.  

\end{theorem}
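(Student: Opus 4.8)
The plan is to establish the spectral stability criterion $\mathcal{L}^0\ge 0$ of Theorem~\ref{theo-main}, working throughout with the quadratic form. Using the identity recorded above, $\langle\mathcal{L}^0\psi,\psi\rangle_{L^2}=\langle\mathcal{A}_2^0\psi,\psi\rangle_{L^2}+\langle\mathcal{A}_1^0\varphi,\varphi\rangle_{L^2}$ for $\psi\in\mathcal{V}^\dagger$, where $\varphi\in\mathcal{V}$ solves $\mathcal{A}_1^0\varphi=\mathcal{B}^0\psi$, together with the already observed nonnegativity $\langle\mathcal{A}_1^0\varphi,\varphi\rangle_{L^2}\ge0$ (which uses only $\mu_e^\pm<0$), the whole matter reduces to showing $\langle\mathcal{A}_2^0\psi,\psi\rangle_{L^2}\ge0$. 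Reading off \eqref{L2norm-A012}, the Dirichlet/Hardy term $\int_\DD(|\partial_r\psi|^2+r^{-2}|\psi|^2)\,dx$ and the projection term $-\int_\DD\int(\mu_e^+|\mathcal{P}^+(\hat v_\theta\psi)|^2+\mu_e^-|\mathcal{P}^-(\hat v_\theta\psi)|^2)\,dvdx$ are both nonnegative, so the only term that can be negative is
\[
M[\psi]:=-\int_\DD\Big(\int r\hat v_\theta(\mu_p^++\mu_p^-)\,dv\Big)|\psi|^2\,dx .
\]
Since $\DD$ is the unit disk, $r\le1$ and hence $r^{-2}\ge1$, so $\int_\DD|\psi|^2\,dx\le\int_\DD r^{-2}|\psi|^2\,dx$; thus it suffices to bound $M[\psi]$ from below by $-(1-\delta)\int_\DD|\psi|^2\,dx$ for some $\delta>0$, and no Poincaré-type inequality is needed.

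For part \emph{(i)} I would expose the favorable sign hidden in $M[\psi]$ by substituting $rv_\theta=p^\pm\mp r\psi^0$, which gives $r\hat v_\theta\mu_p^\pm=\langle v\rangle^{-1}p^\pm\mu_p^\pm\mp\langle v\rangle^{-1}r\psi^0\mu_p^\pm$. Summing over $\pm$, the contribution of the first pieces to $M[\psi]$ is $-\int_\DD\int\langle v\rangle^{-1}(p^+\mu_p^++p^-\mu_p^-)\,dv\,|\psi|^2\,dx$, which is $\ge0$ precisely by the hypothesis $p\mu_p^\pm\le0$; the remaining pieces are bounded in absolute value by $\|\psi^0\|_{L^\infty}\int_\DD\big(\int(|\mu_p^+|+|\mu_p^-|)\,dv\big)|\psi|^2\,dx\le C\|\psi^0\|_{L^\infty}\int_\DD|\psi|^2\,dx$, where $C<\infty$ because, $\varphi^0$ being bounded, one has $1+|e^\pm|\gtrsim\langle v\rangle$ and the bound \eqref{mu-cond} with $\gamma>2$ makes $\sup_\DD\int_{\RR^2}|\mu_p^\pm|\,dv$ finite. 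Hence $\langle\mathcal{A}_2^0\psi,\psi\rangle_{L^2}\ge\int_\DD|\partial_r\psi|^2\,dx+(1-C\|\psi^0\|_{L^\infty})\int_\DD|\psi|^2\,dx\ge0$ once $\|\psi^0\|_{L^\infty}$ is small. For part \emph{(ii)} the argument is shorter: with $\varphi^0=0$ one has $e^\pm=\langle v\rangle$, so \eqref{suff-stab2} reads $|\mu_p^\pm|\le\epsilon(1+\langle v\rangle^\gamma)^{-1}$, and since $|r\hat v_\theta|\le r\le1$ on $\DD$,
\[
\Big|\int r\hat v_\theta(\mu_p^++\mu_p^-)\,dv\Big|\le\int_{\RR^2}\frac{2\epsilon\,dv}{1+\langle v\rangle^\gamma}=C\epsilon<\infty
\]
because $\gamma>2$. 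Therefore $M[\psi]\ge-C\epsilon\int_\DD|\psi|^2\,dx\ge-C\epsilon\int_\DD r^{-2}|\psi|^2\,dx$, so $\langle\mathcal{A}_2^0\psi,\psi\rangle_{L^2}\ge\int_\DD|\partial_r\psi|^2\,dx+(1-C\epsilon)\int_\DD r^{-2}|\psi|^2\,dx\ge0$ for $\epsilon$ small, with no restriction on $\psi^0$. In either case $\mathcal{L}^0\ge0$ and Theorem~\ref{theo-main}\emph{(i)} yields spectral stability.

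There is no serious obstacle here — the authors themselves note that the stability examples are relatively easy. The points that need care are: (a) reading off \eqref{L2norm-A012} that exactly one term, $M[\psi]$, can be negative, the Hardy term and the projection term being manifestly nonnegative; (b) the algebraic substitution $rv_\theta=p^\pm\mp r\psi^0$ in part \emph{(i)}, which is what turns the raw size of $\mu_p^\pm$ into the sign-definite quantity $p\mu_p^\pm$; and (c) observing that the Hardy-type term $\int_\DD r^{-2}|\psi|^2\,dx$ already present in $\langle\mathcal{A}_2^0\psi,\psi\rangle_{L^2}$ dominates $\int_\DD|\psi|^2\,dx$ simply because $\DD$ is the unit disk, so that no Poincaré inequality is invoked. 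The one technical verification not to skip is the finiteness, uniformly in $x$, of the momentum integrals $\int_{\RR^2}|\mu_p^\pm|\,dv$, which follows from \eqref{mu-cond} with $\gamma>2$ together with boundedness (in \emph{(i)}) or vanishing (in \emph{(ii)}) of $\varphi^0$.
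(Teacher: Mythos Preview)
Your proof is correct and follows the same overall strategy as the paper: reduce to $\mathcal{A}_2^0\ge0$ via $\langle\mathcal{A}_1^0\varphi,\varphi\rangle_{L^2}\ge0$, observe that only the $\mu_p^\pm$ term in \eqref{L2norm-A012} can be negative, and in part \emph{(i)} use the substitution $rv_\theta=p^\pm\mp r\psi^0$ to extract the sign-definite piece $p^\pm\mu_p^\pm\le0$.

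The one place you differ from the paper is in how you absorb the residual term $C\|\psi^0\|_{L^\infty}\int_\DD|\psi|^2\,dx$ (and the analogous $C\epsilon$ term in part \emph{(ii)}). The paper invokes a Poincar\'e inequality $\int_\DD r|\psi|^2\,dx\le c_0\int_\DD(|\partial_r\psi|^2+r^{-2}|\psi|^2)\,dx$; you instead use the elementary fact that $r\le1$ on the unit disk, so $\int_\DD|\psi|^2\,dx\le\int_\DD r^{-2}|\psi|^2\,dx$ is already dominated by the Hardy term present in \eqref{L2norm-A012}. Your route is slightly more direct and avoids the extra constant $c_0$, at no cost; the paper's use of Poincar\'e is unnecessary here since the Hardy weight $r^{-2}$ does the job on a bounded disk. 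Otherwise the two proofs are identical in substance.
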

\begin{proof} First consider case {\it (i)}.  
We only need to show that $\mathcal{A}_2^0 \ge 0$. 
Let us look at the second integral  of $ \langle \mathcal{A}_2^0 \psi,\psi\rangle_{L^2}$ in \eqref{L2norm-A012}.   
By the definition \eqref{ep} of $p^\pm$, we may write  
$$\begin{aligned}\int r \hat v_\theta \mu_p^\pm(e^\pm,p^\pm) \; dv  
= \int \langle v \rangle^{-1}p^\pm \mu_p^\pm(e^\pm,p^\pm) \; dv  
\mp r\psi^0 \int \langle v \rangle^{-1}\mu_p^\pm(e^\pm,p^\pm) \; dv , 
\end{aligned}$$
in which the first term on the right is nonpositive due to \eqref{suff-stab}.  Therefore we have 
$$\begin{aligned}
 -  \int _\DD \Big( \int r \hat v_\theta (\mu_p^+ + \mu_p^-) \; dv\Big)  |\psi|^2\; dx & \ge   \int_{\DD}  \int \langle v \rangle^{-1} (\mu_p^+ - \mu_p^-) \; dv\; r \psi^0|\psi|^2 \; dx
\\
& \ge  -  \sup _r |\psi^0|\Big( \sup_r \int \langle v \rangle^{-1} (|\mu_p^+|+ |\mu_p^-| ) \; dv \Big)\int_{\DD}  r |\psi|^2 \; dx 
.\end{aligned}
 $$
Now by the Poincar\'e inequality, 
$$\int_{\DD}  r |\psi|^2 \; dx \le c_0\int_\DD \Big(|\D_r\psi|^2 + \frac 1{r^2}|\psi|^2 \Big) \; dx,$$
for some constant $c_0$. In addition, thanks to  assumption \eqref{mu-cond}, the supremum over $r\in [0,1]$ of 
$\int \langle v \rangle^{-1}(|\mu_p^+|+ |\mu_p^-| ) \; dv$ is finite if $\varphi^0$ is bounded. 
Thus if the sup norm of $\psi^0$ is sufficiently small, or more precisely if $\psi^0$ satisfies
\begin{equation}\label{sup-psibound} c_0 \sup _r |\psi^0|\Big( \sup_r \int \langle v \rangle^{-1}(|\mu_p^+|+ |\mu_p^-| ) \; dv \Big) \le 1,\end{equation}
then the second term in $\langle \mathcal{A}_2^0 \psi,\psi\rangle_{L^2}$ is smaller than the first, and so the operator $\mathcal{A}_2^0$ is nonnegative. 

Case {\it (ii)} is even easier.  As above, we only have to bound the second term in 
$ \langle \mathcal{A}_2^0 \psi,\psi\rangle_{L^2}$.
Using $|r\hat v_\theta| \le 1$  and $e=\langle v\rangle$ together with \eqref{suff-stab2}, we have 
$$
\Big | \int _\DD \Big( \int r \hat v_\theta (\mu_p^+ + \mu_p^-) \; dv\Big)  |\psi|^2\; dx\Big|  
 \le \iint \frac\epsilon{1+|v|^\gamma} dv |\psi|^2 dx  \le  C\epsilon \int|\psi|^2 dx. $$
If $\epsilon$ is sufficiently small, the second term is smaller than the positive terms.  
\end{proof}

\subsection{Unstable examples}
For instability, it suffices to find a single function in the domain of $\mathcal{L}^0$ such that 
$\langle \mathcal{L}^0 \psi,\psi\rangle_{L^2} <0$. We shall construct some examples where this is the case. 
We limit ourselves to a purely magnetic equilibrium $(\mu^\pm,\vE^0,B^0)$ with 
$\vE^0=0$ and $B^0 = \frac 1r \partial_r (r \psi^0)$.  Thus $e=\langle v \rangle$ and $p^\pm = r(v_\theta \pm \psi^0)$.  
%
%
%
 In this subsection, we shall also make the  assumption  that 
\begin{equation}\label{simplified-cond} 
\mu^+(e ,p) = \mu^-(e ,-p),\qquad \forall e,p.\end{equation}
This assumption holds for example if $\mu^+ = \mu^-$ is an even function of $p$. 
It greatly simplifies the verification of the spectral condition on $\mathcal{L}^0$.
%
%

We now show that  assumption \eqref{simplified-cond} implies that the operator $\mathcal{B}^0$ vanishes 
and so $\mathcal{L}^0$ simply reduces to $\mathcal{A}_2^0$. Indeed, let us recall that 
$$
\mathcal{B}^0 \psi  = r \int \Big(\mu_p^+(e ,p^+) +  \mu_p^-(e ,p^-)\Big )\; dv \psi  
+ \int \Big(\mu_e^+(e ,p^+) \mathcal{P}^+(\hat v_\theta \psi) 
+\mu_e^-(e ,p^-) \mathcal{P}^-(\hat v_\theta \psi) \Big) \; dv ,  $$
in which $e = \langle v \rangle$ and $p^\pm = r(v_\theta \pm \psi^0)$. 
For the first term in $\mathcal{B}^0 \psi$, we again note by  \eqref{simplified-cond}    that the function 
$$\mu_p^+ (e ,p^+)+  \mu_p^-(e ,p^-) = - \mu_p^-(\langle v\rangle , - r(v_\theta + \psi^0)) + \mu_p^-(\langle v\rangle ,r(v_\theta - \psi^0))$$
is odd in $v_\theta$. Thus, the first integral in $\mathcal{B}^0$ vanishes. 
As for the second integral, we note that 
$$\oD ^-= \hat v_r \D_r   + \Big( (- \hat v_\theta) B^0 +  \frac 1r (-v_\theta)(- \hat v_\theta)\Big)\D_{v_r} 
 - \Big(\hat v_r B^0  + \frac 1r v_r (- \hat v_\theta)\Big)   \D_{-v_\theta}  .$$
That is, $\oD ^-$ acting on functions $f(v_r,-v_\theta)$ is the same as $\oD ^+$ acting on $f(v_r,v_\theta)$ 
(ignoring the dependence on $x$).   
As a consequence we have 
$$\mathcal{P}^+ (f(v_r,v_\theta)) (v_r,v_\theta) = \mathcal{P}^- (f(v_r,-v_\theta)) (v_r,-v_\theta).$$  
Using this identity together with the fact that $\mathcal{P}^\pm (-f) = -\mathcal{P}^\pm (f)$, we have  
$$\begin{aligned}
&\mu_e^+(e,r(v_\theta+\psi^0)\mathcal{P}^+(\hat v_\theta \psi) (v_\theta)  
+\mu_e^-(e,r(v_\theta-\psi^0) \mathcal{P}^-(\hat v_\theta \psi) (v_\theta)
\\= &- \mu_e^- (e ,r(-v_\theta - \psi^0))\mathcal{P}^-(\hat v_\theta \psi) (-v_\theta) 
-\mu_e^+ (e ,r(-v_\theta + \psi^0))\mathcal{P}^+(\hat v_\theta \psi)(-v_\theta)
\\= &-\mu_e^+ (e ,r(-v_\theta + \psi^0))\mathcal{P}^+(\hat v_\theta \psi)(-v_\theta) - \mu_e^- (e ,r(-v_\theta - \psi^0))\mathcal{P}^-(\hat v_\theta \psi) (-v_\theta) 
. 
\end{aligned}$$
Thus the function 
$\mu_e^+\mathcal{P}^+(\hat v_\theta \psi) +\mu_e^- \mathcal{P}^-(\hat v_\theta \psi)$ 
is odd  in $v_\theta$, 
so that the second integral in $\mathcal{B}^0\psi $ vanishes. 
Similarly, we easily obtain
\begin{equation}\label{new-A02} 
 \mathcal{A}_2^0 \psi  = -\Delta_r \psi   - 2  \int r \hat v_\theta \mu_p^- (e,p^-)\; dv \psi 
  -   2\int \hat v_\theta \mu_e^-(e,p^-)  \mathcal{P}^-(\hat v_\theta \psi) \; dv.
\end{equation}
We summarize the above considerations in the following lemma.  
\begin{lemma}\label{lem-simplified} 
If $(\mu^\pm,0,\psi^0)$ be an equilibrium satisfying \eqref{simplified-cond}, then $\mathcal{B}^0 =0$ and 
$$\mathcal{L}^0 = \mathcal{A}^0_2,$$
for $\mathcal{A}^0_2$ as in \eqref{new-A02}. 
In particular, $\mathcal{A}_2^0\not \ge 0$ implies the spectral instability of $(\mu^\pm,0,\psi^0)$. 
\end{lemma}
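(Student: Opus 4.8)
The plan is to establish the two algebraic identities $\mathcal{B}^0=0$ and $\mathcal{L}^0=\mathcal{A}_2^0$ (in the reduced form \eqref{new-A02}), after which the instability claim is immediate from Theorem~\ref{theo-main}\emph{(iii)}: under these identities the hypothesis $\mathcal{A}_2^0\not\ge0$ is literally the statement $\mathcal{L}^0\not\ge0$. Most of the ingredients have already been assembled in the paragraphs preceding the lemma, so the proof is mainly a matter of putting them together in order.

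First I would fix $\sigma$ to be the reflection $(v_r,v_\theta)\mapsto(v_r,-v_\theta)$, which leaves $dv$ invariant, commutes with the specular reflection $v_r\mapsto-v_r$, fixes $e=\langle v\rangle$, and (since $\psi^0=\psi^0(r)$) sends $p^\pm$ to $-p^\mp$. Differentiating \eqref{simplified-cond} in $p$ and in $e$ gives $\mu_p^+(e,p)=-\mu_p^-(e,-p)$ and $\mu_e^+(e,p)=\mu_e^-(e,-p)$. From the explicit forms \eqref{def-opD}, $\sigma$ conjugates $\oD^+$ into $\oD^-$; since $\sigma$ is moreover a unitary from $\cH^+=L^2_{|\mu_e^+|}$ onto $\cH^-=L^2_{|\mu_e^-|}$ (this uses exactly the relation between $\mu_e^+$ and $\mu_e^-$ just noted, together with $\sigma$ preserving the weak specular domains $\mathrm{dom}(\oD^\pm)$ of \eqref{domainD}), it carries $\ker\oD^+$ onto $\ker\oD^-$ and hence intertwines the orthogonal projections: $\mathcal{P}^+(g)\circ\sigma=\mathcal{P}^-(g\circ\sigma)$ for all $g\in\cH$. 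Now the first $dv$-integrand in $\mathcal{B}^0\psi$, namely $\mu_p^+(e,p^+)+\mu_p^-(e,p^-)$, is odd under $\sigma$; and since $\hat v_\theta\psi$ is odd under $\sigma$, so is the second integrand $\mu_e^+\mathcal{P}^+(\hat v_\theta\psi)+\mu_e^-\mathcal{P}^-(\hat v_\theta\psi)$. Integration over $\RR^2$ kills both, so $\mathcal{B}^0=0$, and therefore $\mathcal{L}^0=\mathcal{A}_2^0$ by \eqref{operator-L0}. Applying the same substitution $v_\theta\mapsto-v_\theta$ inside the two integral terms of $\mathcal{A}_2^0$ shows the $+$ and $-$ contributions coincide (e.g.\ $\sigma$ carries $r\hat v_\theta\mu_p^+(e,p^+)$ to $r\hat v_\theta\mu_p^-(e,p^-)$, and similarly for the projection term), which doubles the $-$ terms and produces exactly \eqref{new-A02}. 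Finally $\mathcal{L}^0=\mathcal{A}_2^0\not\ge0$ together with Theorem~\ref{theo-main}\emph{(iii)} gives a growing mode, i.e.\ spectral instability.

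The only step calling for genuine care is the projection identity $\mathcal{P}^+(g)\circ\sigma=\mathcal{P}^-(g\circ\sigma)$: one must check not merely that $\sigma$ maps $\ker\oD^+$ onto $\ker\oD^-$ but that $\sigma$ is an honest unitary between the \emph{weighted} spaces $\cH^+$ and $\cH^-$ and respects the weak specular domains $\mathrm{dom}(\oD^\pm)$, and both points reduce to the two relations extracted from \eqref{simplified-cond} plus the commutation of $\sigma$ with the $v_r$-reflection. Everything else is routine bookkeeping with the change of variables $v_\theta\mapsto-v_\theta$.
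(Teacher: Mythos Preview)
Your argument is correct and follows essentially the same route as the paper: the paper also shows both integrands in $\mathcal{B}^0\psi$ are odd under $v_\theta\mapsto -v_\theta$ by combining \eqref{simplified-cond} with the observation that $\oD^-$ acting on $f(v_r,-v_\theta)$ coincides with $\oD^+$ acting on $f(v_r,v_\theta)$, and then derives the same intertwining identity for the projections that you write as $\mathcal{P}^+(g)\circ\sigma=\mathcal{P}^-(g\circ\sigma)$. Your only addition is the explicit remark that $\sigma$ is a unitary between the weighted spaces $\cH^\pm$ respecting $\mathrm{dom}(\oD^\pm)$, which the paper leaves implicit; otherwise the two proofs are the same computation.
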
 \bigskip 

\subsubsection{Homogeneous equilibria}
We start with the homogenous case  $\vE^0=0$ and $B^0=0$, in which case the linearized Vlasov operator 
reduces to $\oD  = \hat v \cdot \nabla_x = \hat v_r\partial_r$. 
The projection $\mathcal{P} = \mathcal{P}^\pm$ is simply the average 
$$ \mathcal{P}(\psi) = \frac 1\pi \int_\DD \psi(r) \; dx  = 2 \int_0^1 \psi(r)\,r\,dr$$
for any radial function $\psi = \psi(r)$.  In addition,  noting that $\oD (r \hat v_\theta) =0$, we have 
$$ \mathcal{P}(\hat v_\theta \psi) =  r\hat v_\theta \mathcal{P}(\frac \psi r)  
= r \hat v_\theta  \frac 1\pi \int_\DD r^{-1}\psi(r) \; dx  = 2 r \hat v_\theta \psi_R, \qquad \psi_R: =  \int_0^1 \psi(r) \; dr. $$
Thus, as  in \eqref{L2norm-A012}, we obtain the basic identity 
$$
\begin{aligned}
 \langle \mathcal{A}_2^0 \psi,\psi\rangle_{L^2} 
 &=\int_\DD\Big(|\D_r\psi|^2 + \frac 1 {r^2} |\psi|^2 \Big) \; dx  
 -  2\int _\DD \Big( \int r \hat v_\theta \mu_p^- \; dv\Big)  |\psi|^2\; dx  
 -  2 \int_\DD\int  \mu_e^- |\mathcal{P}(\hat v_\theta \psi)|^2 \; dvdx \\
 &=\int_\DD\Big(|\D_r\psi|^2 + \frac 1 {r^2} |\psi|^2 \Big) \; dx  
 -  2\int _\DD \Big( \int \langle v \rangle^{-1}p \mu_p^- \; dv\Big)  |\psi|^2\; dx  
 -  8 \int_\DD\int  r^2 \hat v^2_\theta \mu_e^-\; dvdx | \psi_R|^2 \\ 
 &= I+II+III, 
\end{aligned}$$ in which $p=r v_\theta$. 
%
%

\begin{theorem} \label{lem-mg-unstab} 
Let $\mu^\pm = \mu^\pm(e,p)$ be an homogenous equilibrium satisfying \eqref{simplified-cond}. Assume also that 
\begin{equation}\label{mg-unstab}
p \mu^-_p(e,p)\ge c_0 p^2   \nu(e), \qquad \quad \forall ~ e,p,
 \end{equation} 
for some positive constant $c_0$ and some nonnegative continuous function $\nu(e)$ such that $\nu \not \equiv 0$. 
Then there exists a positive number $K_0$ such that both of the rescaled homogenous equilibria 
(i) $\mu^{(K),\pm}(e,p): = K \mu^\pm(e, p)$ and (ii) $\mu^{(K),\pm}(e,p): = \mu^\pm(e, K p)$ 
are spectrally unstable, for all $K\ge K_0$. 
\end{theorem}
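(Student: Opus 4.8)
The plan is to reduce, via Lemma~\ref{lem-simplified} (applicable since \eqref{simplified-cond} holds), the stability functional to $\langle\mathcal{A}_2^0\psi,\psi\rangle_{L^2}$, and then to exploit the decomposition displayed just above the statement,
\[
\langle\mathcal{A}_2^0\psi,\psi\rangle_{L^2}=I+II+III,\qquad I=\int_\DD\Big(|\D_r\psi|^2+\tfrac1{r^2}|\psi|^2\Big)dx,\quad III=8\int_\DD\!\!\int r^2\hat v_\theta^2|\mu_e^-|\,dv\,dx\,|\psi_R|^2,
\]
in which $I\ge0$ and $III\ge0$ (as $\mu_e^-<0$), while, with $p=rv_\theta$ and by the hypothesis \eqref{mg-unstab},
\[
II=-2\int_\DD\Big(\int\langle v\rangle^{-1}p\,\mu_p^-\,dv\Big)|\psi|^2dx\ \le\ -2c_0c_*\int_\DD r^2|\psi|^2dx,\qquad c_*:=\int_{\RR^2}\langle v\rangle^{-1}v_\theta^2\,\nu(\langle v\rangle)\,dv .
\]
Here $c_*>0$ because $e=\langle v\rangle$ runs over $[1,\infty)$, on which $\nu$ is continuous, nonnegative and not identically zero, and $\langle v\rangle^{-1}v_\theta^2>0$ off $\{v_\theta=0\}$. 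Thus the only obstruction to $\langle\mathcal{A}_2^0\psi,\psi\rangle_{L^2}<0$ is that the positive terms $I$ and $III$ might outweigh $II$; the strategy, as in \cite{Lin}, is to fix one test function that kills $III$, and then rescale the profile so that $II$ overwhelms the fixed $I$.

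Concretely, I would first fix a radial $\psi\in\mathcal{V}^\dagger$ with $\psi\not\equiv0$ and $\psi_R=\int_0^1\psi(r)\,dr=0$; such a $\psi$ exists since $\psi\mapsto\psi_R$ is a bounded linear functional on the infinite-dimensional space of radial functions in $\mathcal{V}^\dagger$, hence has nontrivial kernel (alternatively, exhibit a difference of two bumps vanishing at $r=0$ and $r=1$). For this $\psi$ one has $III=0$, so $\langle\mathcal{A}_2^0\psi,\psi\rangle_{L^2}=I+II$ with $I$ a fixed positive number and $\int_\DD r^2|\psi|^2dx>0$. Next I would check that each rescaled profile $\mu^{(K),\pm}$ is again a homogeneous equilibrium (still with $\vE^0=B^0=0$) satisfying \eqref{simplified-cond} and the decay bounds \eqref{mu-cond}: \eqref{simplified-cond} is preserved under both rescalings; \eqref{mu-cond} is preserved (with a $K$-dependent constant in case (ii)) since its bounds are uniform in $p$; and the equilibrium identities are routine to verify, being in particular immediate when $\mu^+=\mu^-$ is even in $p$. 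Thus Lemma~\ref{lem-simplified} continues to give $\mathcal{L}^{0,(K)}=\mathcal{A}_2^{0,(K)}$.

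It then remains only to record how $II$ scales. In case (i), $\mu^{(K),-}=K\mu^-$, so $\mu_p^{(K),-}=K\mu_p^-$ and the term $II$ is multiplied by $K$:
\[
\langle\mathcal{A}_2^{0,(K)}\psi,\psi\rangle_{L^2}=I+K\,II\ \le\ I-2Kc_0c_*\int_\DD r^2|\psi|^2dx .
\]
In case (ii), $\mu^{(K),\pm}(e,p)=\mu^\pm(e,Kp)$, so $\D_p\mu^{(K),-}(e,p)=K\mu_p^-(e,Kp)$; applying \eqref{mg-unstab} at the point $Kp$ gives $p\,\D_p\mu^{(K),-}(\langle v\rangle,p)=(Kp)\mu_p^-(\langle v\rangle,Kp)\ge c_0(Kp)^2\nu(\langle v\rangle)$, whence the corresponding $II$ is bounded above by $-2c_0K^2c_*\int_\DD r^2|\psi|^2dx$. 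In either case the right-hand side tends to $-\infty$ as $K\to\infty$, so there is a $K_0$ with $\langle\mathcal{L}^{0,(K)}\psi,\psi\rangle_{L^2}=\langle\mathcal{A}_2^{0,(K)}\psi,\psi\rangle_{L^2}<0$ for all $K\ge K_0$; by Theorem~\ref{theo-main}{\it(iii)} each such rescaled equilibrium then admits a growing mode, i.e.\ is spectrally unstable. The parts that demand care rather than routine computation are the admissibility check for the rescaled profiles and the strict positivity of $c_*$ (which is exactly where the nontriviality of $\nu$ on the physical range $[1,\infty)$ is used); everything else is the bookkeeping of how the quadratic form scales, with the choice $\psi_R=0$ precisely deleting the one positive term that would otherwise also grow under rescaling (i).
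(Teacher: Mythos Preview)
Your proposal is correct and follows essentially the same approach as the paper: reduce via Lemma~\ref{lem-simplified} to $\mathcal{A}_2^0$, choose a test function $\psi$ with $\psi_R=0$ to annihilate the term $III$, and then observe that under either rescaling the term $II$ acquires a factor of $K$ (case~(i)) or $K^2$ (case~(ii)) while $I$ stays fixed. The only cosmetic difference is that the paper writes down an explicit piecewise-quadratic $\psi_*$ and normalizes it so that $I=1$, whereas you argue abstractly that a suitable $\psi\in\mathcal{V}^\dagger$ with $\psi_R=0$ exists; your remarks on the strict positivity of $c_*$ and on the admissibility of the rescaled profiles are, if anything, slightly more careful than the paper's.
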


\begin{proof}   
Note that terms I and III are nonnegative since $\mu_e<0$.  So by Lemma \ref{lem-simplified}, it suffices for instability
that the middle integral II be negative and  dominate the other two.  
We begin with case {\it (i)} of the theorem.  
Observe that since $\mu^{(K),\pm}(e,p)$ satisfy \eqref{simplified-cond}, the pair $(\mu^{(K),+},\mu^{(K),-})$ 
is indeed an homogeneous equilibrium. It suffices  to construct  a function $\psi_* \in H^{2\dagger}(\DD)$ 
such that $ \langle \mathcal{A}_2^0 \psi_*,\psi_*\rangle_{L^2} <0$.   We choose  
\begin{equation}\label{def-psi1} \psi_*(r): = \left\{\begin{aligned} &\psi_1(r), \qquad \quad &0\le r\le \frac 12
\\
- &\psi_1(1-r), \qquad &\frac 12\le r\le 1,
\end{aligned}
\right.
\end{equation}
where $\psi_1(r) = \gamma_0 r(\frac12-r)$, for some normalizing constant $\gamma_0$.   
Clearly $(\psi_*)_R = 0$ so that $III=0$.    Moreover,  
$$I = \int_\DD\Big(|\D_r\psi_*|^2 + \frac 1 {r^2} |\psi_*|^2 \Big) \; dx  
= 2\pi \int_0^{1/2}\Big(|\D_r\psi_1|^2 + \frac 1 {r(1-r)} |\psi_1|^2 \Big) \; dr = 1 $$
by choice of the constant $\gamma_0$. 
So it remains to show that $II<-1$.  

 Using the assumption \eqref{mg-unstab} and the first scaling {\it (i)}, we have 
$$\begin{aligned} 
-II = 2\int _\DD \Big( \int \langle v \rangle^{-1}p \mu_p^{(K),-} \; dv\Big)  |\psi_*|^2\; dx \quad
&\ge\quad  4c_0K \pi \int_0^1 \int \langle v \rangle^{-1}p^2   \nu(e) \; dv |\psi_*|^2\;rdr 
\\
\quad&\ge\quad  4c_0K \pi \Big(\int \langle v \rangle^{-1} v_\theta^2   \nu(e) \; dv\Big) 
 \int_0^{1} r^3 |\psi_*|^2\;dr 
.\end{aligned}$$
The integral in $v$ is a finite positive constant thanks to the decay assumption on $\mu$.   
So we can choose $K$ large enough that $-II > 1$.   This settles case {\it (i)}.   

Similarly, for case {\it (ii)} we have  
$$\begin{aligned} 
-II &=
 2\int _\DD \Big( \int \langle v \rangle^{-1}(Kp) \mu_p^{-}(e,Kp) \; dv\Big)  |\psi_*|^2\; dx  \\
&\ge  4c_0K^2 \pi \int_0^1 \int \langle v \rangle^{-1}p^2   \nu(e) \; dv |\psi_*|^2\;rdr 
=  4c_0K^2 \pi \Big(\int \langle v \rangle^{-1} v_\theta^2   \nu(e) \; dv\Big) 
 \int_0^{1} r^3 |\psi_*|^2\;dr 
,\end{aligned}$$
which is again greater than one for sufficiently large $K$.  This settles case {\it (ii)}.  
\end{proof}

We remark that the constant $K_0$ in Lemma \ref{lem-mg-unstab} is certainly not optimal.  
For instance, we could take $\psi_*$ to be the ground state of the operator $-\Delta_r$, 
which is a Bessel function.     \bigskip 


\subsubsection{Inhomogeneous equilibria}
For spatially dependent equilibria we will prove a similar result. We first observe  as in the homogeneous case that the projection $ \mathcal{P}^-$ satisfies 
\begin{equation}\label{Pr-psi} \mathcal{P}^-(\psi) = \frac 1\pi \int_\DD \psi(r) \; dx  = 2 (r\psi)_R,\end{equation}
for functions $\psi = \psi(r)$, where $(\psi)_R: = \int_0^1 \psi(r)\; dr$. 
We recall from \eqref {L2norm-A012} that 
$$\begin{aligned}
 \langle \mathcal{A}_2^0 \psi,\psi\rangle_{L^2} &=\int_\DD\Big(|\D_r\psi|^2 + \frac 1 {r^2} |\psi|^2 \Big)  -  2\int _\DD \Big( \int r \hat v_\theta \mu_p^- \; dv\Big)  |\psi|^2\; dx  -  2 \int_\DD\int  \mu_e^- |\mathcal{P}^-(\hat v_\theta \psi)|^2 \; dvdx 
 \end{aligned}$$ 
$=I+II+III$, where clearly $I\ge0$ and $III\ge0$.  
Of course, both $e=\langle v \rangle$ and $p^- = r(v_\theta - \psi^0)$ belong to the kernel of $\oD ^-$.   
Thus we have 
$$
\mathcal{P}^-(\hat v_\theta \psi) = \langle v \rangle^{-1}\mathcal{P}^-(v_\theta \psi) 
= \langle v \rangle^{-1}\mathcal{P}^-\Big(r(v_\theta-\psi^0) \frac{\psi}{r} + \psi^0\psi\Big) 
=  \langle v \rangle^{-1}p^- \mathcal{P}^-\Big(\frac{\psi}{r}\Big) + \langle v \rangle^{-1}\mathcal{P}^-(\psi^0\psi) .$$
Since $\psi^0$ and $\psi$ are functions depending only on $r$, we apply \eqref{Pr-psi} to give  
\begin{equation}\label{comp-proj}
\mathcal{P}^-(\hat v_\theta \psi) = 2\langle v \rangle^{-1}p^- \psi_R + 2\langle v \rangle^{-1}(r\psi^0\psi)_R .
\end{equation}
By definition of $p^-$, we can write 
$$ \int r \hat v_\theta \mu_p^- \; dv = \int \langle v \rangle^{-1} p^- \mu_p^- \; dv 
+ r\psi^0 \int  \langle v \rangle^{-1}\mu_p^- \; dv.$$
Using the inequality $(a+b)^2\le 2 a^2 + 2b^2$, we then obtain 
\begin{equation}\label{boundA2-mg}\begin{aligned}
 \langle \mathcal{A}_2^0 \psi,\psi\rangle_{L^2} &\le \int_\DD\Big(|\D_r\psi|^2 + \frac 1 {r^2} |\psi|^2 \Big) \; dx  
 -  2\int _\DD \Big(\int \langle v \rangle^{-1}p^- \mu_p^- \; dv \Big)  |\psi|^2\; dx  
 \\&\quad-  16 \int_\DD\int  \langle v \rangle^{-2}(p^-)^2\mu_e^- \; dvdx |\psi_R|^2 
 + 2\sup_{r\in [0,1]} \Big( \int  \langle v \rangle^{-1}|\mu_p^-| \; dv\Big)\|\sqrt {r|\psi^0|}\psi\|^2_{L^2(\DD)} \\&\quad 
 + 16  \sup_{r\in [0,1]} \Big( \int  \langle v \rangle^{-2}|\mu_e^-| \; dv\Big) (r\psi^0\psi)_R^2 
= I + IIA + IIIA + IIB + IIIB.  
 \end{aligned} \end{equation}

We now scale in the variable $p$ to get the following result.  
\begin{theorem}\label{lem-unstab-inhomo} Assume that $\mu^\pm$ satisfy \eqref{simplified-cond} and that
\begin{equation}\label{mg-unstab-in}
p \mu^-_p(e,p)\ge c_0 p^2   \nu(e), \qquad \quad \forall ~ e,p,
 \end{equation} 
for some positive constant $c_0$ and some nonnegative function $\nu(e)$ such that $\nu \not \equiv 0$. 
Define $\mu^{(K),\pm}(e,p): = \mu^\pm(e, K p)$ and let $\psi^{(K),0}$ be the solution of the equation 
 \begin{equation}\label{equilibrium-psi} \begin{aligned}
- \Delta_r\psi^{(K),0} &=  \int  \hat v_\theta\Big[\mu^{(K),+}(\langle v \rangle ,r(v_\theta + \psi^{(K),0})) - \mu^{(K),-}(\langle v \rangle ,r (v_\theta - \psi^{(K),0})) \Big]\;dv,
\end{aligned}
\end{equation}
with $\psi^{(K),0} =0$ on the boundary $\D\DD$. 
Then there exists a positive number $K_0$ such that the inhomogenous purely magnetic equilibria $(\mu^{(K),\pm},0,B^{(K),0})$, with $B^{(K),0} = \frac 1r \D_r (r\psi^{(K),0})$, are spectrally unstable for all $K\ge K_0$.
\end{theorem}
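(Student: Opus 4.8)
The plan is to reduce instability to producing a single test function, exactly as in the scaled homogeneous case (Theorem \ref{lem-mg-unstab}(ii)), the only genuinely new ingredient being a uniform bound on the rescaled equilibrium magnetic potential $\psi^{(K),0}$.

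First I would note that the family $\mu^{(K),\pm}(e,p)=\mu^\pm(e,Kp)$ still satisfies \eqref{simplified-cond}, since $\mu^{(K),+}(e,p)=\mu^+(e,Kp)=\mu^-(e,-Kp)=\mu^{(K),-}(e,-p)$, and it still has zero electric field. Hence Lemma \ref{lem-simplified} applies to $(\mu^{(K),\pm},0,B^{(K),0})$, giving $\mathcal B^0=0$ and $\mathcal L^0=\mathcal A_2^0$; so it suffices to produce, for all large $K$, one function $\psi_*\in\cV^\dagger$ with $\langle\mathcal A_2^0\psi_*,\psi_*\rangle_{L^2}<0$. I would take $\psi_*$ to be the explicit function from \eqref{def-psi1}, so that $(\psi_*)_R=0$ and the kinetic term $I=\int_\DD(|\dr\psi_*|^2+r^{-2}|\psi_*|^2)\,dx$ equals $1$.

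Next I would feed $\psi_*$ into the general bound \eqref{boundA2-mg}, which applies verbatim to the rescaled equilibrium (with $\psi^0=\psi^{(K),0}$). The dominant term is $IIA$: by the chain rule $\mu^{(K),-}_p(e,p^-)=K\mu^-_p(e,Kp^-)$ and the hypothesis \eqref{mg-unstab-in} applied with $p$ replaced by $Kp^-$, we get $p^-\mu^{(K),-}_p(e,p^-)=(Kp^-)\mu^-_p(e,Kp^-)\ge c_0K^2(p^-)^2\nu(e)$; since $(p^-)^2=r^2(v_\theta-\psi^{(K),0})^2$ and the cross term is odd in $v_\theta$,
\[
\int\langle v\rangle^{-1}(v_\theta-\psi^{(K),0})^2\,\nu(\langle v\rangle)\,dv\ \ge\ \int\langle v\rangle^{-1}v_\theta^2\,\nu(\langle v\rangle)\,dv=:c_1>0
\]
(finite and positive, as in the homogeneous case), so $IIA\le -\delta K^2$ with $\delta:=2c_0c_1\int_\DD r^2|\psi_*|^2\,dx>0$ fixed. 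Because $(\psi_*)_R=0$, the term $IIIA$ vanishes. For the error terms I would use $|\mu^{(K),-}_p(e,p^-)|=K|\mu^-_p(e,Kp^-)|\le KC_\mu(1+\langle v\rangle^\gamma)^{-1}$ and $|\mu^{(K),-}_e(e,p^-)|=|\mu^-_e(e,Kp^-)|\le C_\mu(1+\langle v\rangle^\gamma)^{-1}$ (crucially, no factor $K$ in the $e$-derivative), together with $\gamma>2$, to bound the $v$-integrals in $IIB$ and $IIIB$ and obtain $IIB\le CK\|\psi^{(K),0}\|_{L^\infty(\DD)}$ and $IIIB\le C\|\psi^{(K),0}\|_{L^\infty(\DD)}^2$, the constants depending only on $\psi_*$. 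Summing,
\[
\langle\mathcal A_2^0\psi_*,\psi_*\rangle_{L^2}\ \le\ 1-\delta K^2+CK\|\psi^{(K),0}\|_{L^\infty(\DD)}+C\|\psi^{(K),0}\|_{L^\infty(\DD)}^2 .
\]

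The hard part will be the remaining input: a uniform bound $\|\psi^{(K),0}\|_{L^\infty(\DD)}=o(K)$ as $K\to\infty$ (I expect in fact $\|\psi^{(K),0}\|_{L^\infty(\DD)}\to0$), after which the displayed quantity tends to $-\infty$ and the proof is complete for $K_0$ chosen accordingly. To get it, I would use \eqref{simplified-cond} and the substitution $v_\theta\mapsto-v_\theta$ to rewrite the equilibrium equation \eqref{equilibrium-psi} in the symmetric form $-\Delta_r\psi^{(K),0}=-2\int\hat v_\theta\,\mu^-(\langle v\rangle,Kr(v_\theta-\psi^{(K),0}))\,dv$; then, for $r$ away from the origin, the change of variables $u=Kr(v_\theta-\psi^{(K),0})$ exhibits the right-hand side as $O(1/K)$ uniformly, using the decay/integrability of $\mu^-$ from \eqref{mu-cond} (and from the hypotheses under which the equilibria of the appendix are built). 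Elliptic regularity for $-\Delta_r$ with the Dirichlet condition then gives $\|\psi^{(K),0}\|_{H^{2\dagger}(\DD)}\lesssim\|{\rm RHS}\|_{L^2(\DD)}\to0$, and Sobolev embedding yields the $L^\infty$ decay; a short bootstrap handles the circular appearance of $\psi^{(K),0}$ inside the right-hand side (start from a crude uniform a priori bound, then upgrade via the $O(1/K)$ estimate). Since the scaled case (ii) of Theorem \ref{lem-mg-unstab} is precisely this argument with $\psi^{(K),0}\equiv0$, I expect everything except this equilibrium estimate to be routine.
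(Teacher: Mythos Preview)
Your proposal is correct and mirrors the paper's proof almost exactly: same test function $\psi_*$ from \eqref{def-psi1}, same use of the bound \eqref{boundA2-mg}, same treatment of $IIA$ via \eqref{mg-unstab-in} and of the error terms $IIB,IIIB$ via \eqref{mu-cond}.

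The one point of divergence is the bound on $\|\psi^{(K),0}\|_{L^\infty}$. You aim for $\|\psi^{(K),0}\|_{L^\infty}\to 0$ through the substitution $u=Kr(v_\theta-\psi^{(K),0})$, but this would require integrability of $\mu^-(\cdot,u)$ in $u$, which \eqref{mu-cond} does \emph{not} give (the decay there is only in $e$). More to the point, this refinement is unnecessary: since $IIA\le -\delta K^2$ while the errors are $O(K\|\psi^{(K),0}\|_{L^\infty}+\|\psi^{(K),0}\|_{L^\infty}^2)$, a uniform $O(1)$ bound on $\|\psi^{(K),0}\|_{L^\infty}$ already suffices. The paper obtains this in one line by the maximum principle: the right side of \eqref{equilibrium-psi} is bounded pointwise by $C_\mu\int(1+\langle v\rangle^\gamma)^{-1}\,dv$ \emph{independently of $K$} (the $K$-scaling sits only in the $p$-argument of $\mu^\pm$, and the available decay uses only $e=\langle v\rangle$), so $|\Delta_r\psi^{(K),0}|\le C_\mu$; comparing $\psi^{(K),0}$ with $\pm\tfrac{C_\mu}{3}r^2$ under $-\Delta_r$ with the Dirichlet condition yields the uniform $L^\infty$ bound. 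Replace your change-of-variables step by this argument and the proof is complete.
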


\begin{proof}  As before, we will check the instability by showing $ \langle \mathcal{A}_2^0 \psi_*,\psi_*\rangle_{L^2}<0$ 
for some $\psi_*$.   As in Lemma \ref{lem-mg-unstab}, we make the simple choice of  $\psi_*$ 
given by \eqref{def-psi1} so that $(\psi_*)_R =0$, whence $IIIA=0$, with the constant chosen so that the first integral term $I$ in  
 \eqref{boundA2-mg} equals 1.  
  Thus we obtain 
\begin{equation}\label{boundA2-mg-01}\begin{aligned}
 \langle \mathcal{A}_2^0 \psi_*,\psi_*\rangle_{L^2} &\le 1  -  2\int _\DD \Big(\int \langle v \rangle^{-1} Kp^- \mu_p^-(\langle v \rangle,Kp^-) \; dv \Big)  |\psi_*|^2\; dx  
 \\&\quad+ C_0K\|\psi^{(K),0}\|_{L^\infty} 
 \sup_{r\in [0,1]} \Big( \int  \langle v \rangle^{-1}|\mu_p^-(\langle v \rangle,Kp^-)| \; dv\Big)
 \\&\quad+ C_0\|\psi^{(K),0}\|_{L^\infty} ^2 
 \sup_{r\in [0,1]} \Big( \int  \langle v \rangle^{-2}|\mu_e^-(\langle v \rangle,Kp^-)| \; dv\Big),  
 \end{aligned}
 \end{equation}
for some constant $C_0$ that depends only on the $L^2$ norm of $\psi_*$.   
We shall show that the second integral $IIA$ in this estimate dominates all the other terms if $K$ is large.  
 From the decay assumption \eqref{mu-cond} on $\mu_e^\pm$ and $\mu_p^\pm$, we have 
$$ 
\int  \langle v \rangle^{-1}|\mu_p^-(\langle v \rangle,Kp^-)| \; dv 
\le C_\mu  \int \frac{1}{\langle v \rangle(1 + \langle v \rangle ^\gamma)} \; dv \le C_\mu,$$ 
with $\gamma>2$ and for some constant $C_\mu$ independent of $K$.  
A similar estimate holds for the last integral in \eqref{boundA2-mg-01}.   
Now by using the assumption \eqref{mg-unstab-in} and the fact that $\nu(\langle v \rangle)$ is even in $v_\theta$, we have
$$
\begin{aligned}  IIA &= 
 -  2\int _\DD  \Big(\int \langle v \rangle^{-1}Kp^- \mu_p^{-}(\langle v \rangle , Kp^-) \; dv \Big)  |\psi_*|^2\; dx \\& \le -  2c_0 K^2 \int _\DD \Big(\int r^2\langle v \rangle^{-1}(v_\theta - \psi^{(K),0})^2   \nu(\langle v \rangle) \; dv \Big)  |\psi_*|^2\; dx
 \\
 & = -  2c_0 K^2 \int _\DD \Big(\int \langle v \rangle^{-1}v^2_\theta  \nu(\langle v \rangle) \; dv \Big)   r^2 |\psi_*|^2\; dx -  2 c_0 K^2 \Big(\int \langle v \rangle^{-1}\nu(\langle v \rangle) \; dv \Big)  \int _\DD r^2|\psi^{(K),0}|^2 |\psi_*|^2\; dx
 \\&\le -  2c_0 K^2 \Big(\int \langle v \rangle^{-1}v^2_\theta  \nu(\langle v \rangle) \; dv \Big)  \| r \psi_*\|^2_{L^2(\DD)} 
 = -c_1 K^2  \| r \psi_*\|^2_{L^2(\DD)} ,   
 \end{aligned}$$
 where $c_1>0$ is independent of $K$.  
 Combining these estimates, we have therefore obtained
$$
\begin{aligned}
 \langle \mathcal{A}_2^0 \psi_*,\psi_*\rangle_{L^2}  &\le 1 -  c_1 K^2  \| r \psi_*\|^2_{L^2(\DD)}  
 + C_0C_\mu \|\psi^{(K),0}\|_{L^\infty} (K+\|\psi^{(K),0}\|_{L^\infty}).
 \end{aligned}
 $$
 Furthermore, the $L^2$ norm of $r \psi_*$ is nonzero.  We claim that $\psi^{(K),0}$ is uniformly bounded 
 independently of  $K$. 
  Indeed, recalling that  $\psi^{(K),0}$ satisfies the simple elliptic equation \eqref{equilibrium-psi} and using the decay assumption \eqref{mu-cond} on $\mu^\pm$, we have 
$$ | \Delta_r \psi^{(K),0} |  \le C_\mu\int \frac{1}{1+\langle v \rangle ^{\gamma}} \; dv \le C_\mu  $$
for some constant $C_\mu$  independent of $K$. 
Thus letting $u^{(K)}(r): = \psi^{(K),0}(r) +  C_\mu r^2 /3$, we observe that $-\Delta_r u^{(K)} \le 0$ in $\DD$ 
and $u^{(K)} =  C_\mu/3$ on the boundary $\D\DD$. 
By the standard maximum principle,  $u^{(K)}$ is bounded above and consequently so is $\psi^{(K),0}$.  
In the same way they are bounded below.  This proves the claim.  
Summarizing, we conclude that  $ \langle \mathcal{A}_2^0 \psi_*,\psi_*\rangle_{L^2} $ is dominated 
for large $K$ by $IIA$ and it is therefore strictly negative. 
\end{proof}

\appendix 
\section{Equilibria}\label{sec-Aexistence}
This appendix contains (i) the proof of regularity of our equilibria and (ii) the construction of some simple examples of equilibria. 
We recall that $\vE^0 = -\partial_r \varphi^0 e_r $ and $ B^0 = \frac 1r \partial_r (r \psi^0)$ 
where $(\varphi^0,\psi^0)$ depends only on $r$ and satisfies the elliptic ODE system  \eqref{eqs-equilibria}, 
which we rewrite as 
\begin{equation}\label{equilibria-eqs} 
- \Delta \varphi^0 = h(r,\varphi^0,\psi^0), \qquad \quad -\Delta _r \psi^0 = g(r,\varphi^0,\psi^0),\end{equation} 
$$\begin{aligned}
h(r,\varphi^0,\psi^0) :&=\int  \Big[\mu^+ \Big(\langle v \rangle  
+ \varphi^0, r (v_\theta + \psi^0)\Big) - \mu^- \Big(\langle v \rangle  - \varphi^0, r (v_\theta - \psi^0)\Big) \Big]\;dv
 \\
g(r,\varphi^0,\psi^0):&=  \int \hat v_\theta \Big[\mu^+ \Big(\langle v \rangle  
+ \varphi^0, r (v_\theta + \psi^0)\Big) - \mu^- \Big(\langle v \rangle  - \varphi^0, r (v_\theta - \psi^0)\Big) \Big] \; dv.
\end{aligned}
$$

For the regularity (i), we will verify that $\varphi^0,\psi^0 \in C(\overline \DD)$ implies that $\vE^0,B^0\in C^1(\overline \DD)$. 
Observe that $\varphi^0 _1= 1$ and $\varphi^0_2=\log r$ are two independent solutions of the 
homogeneous ODE $\Delta \varphi^0=0$ with wronskian $1/r$.   
Similarly, $\psi^0=r$ and $\varphi^0 = 1/r$ are two solutions of the homogeneous 
ODE $\Delta_r \psi^0=0$ with  wronskian  $-{2}/{r}$. 
Thus all the solutions $(\varphi^0,\psi^0)$ to \eqref{equilibria-eqs} satisfy the integral equations 
\begin{equation}\label{int-equilibria}\begin{aligned}
\varphi^0 (r)\quad&=\quad \alpha  + \int_0^r s (\log s - \log r)h(\cdot,\varphi^0,\psi^0)(s)\; ds  + \gamma \log r, 
\\\psi^0 (r)\quad&=\quad \beta r  +\frac {1}{2r}  \int_0^r (s^2 - r^2) g(\cdot,\varphi^0,\psi^0)(s)\; ds 
 + \frac \delta r, 
\end{aligned}
\end{equation}
with arbitrary constants $\alpha,\beta,\gamma,\delta$. Since $\varphi^0$ and $\psi^0$ are assumed to be 
continuous at the origin, we require $\gamma = \delta =0$.  
Clearly $h$ is continuous in $\overline \DD$, so that $\varphi^0 \in C^2 (\overline \DD)$ by \eqref{int-equilibria} 
and $\vE^0 = \nabla \varphi^0 \in C^1(\overline \DD)$.  As for $g$, we note that 
$\lim_{r \to 0^+} g(r,\varphi^0(r),\psi^0(r))=0$, which follows from the fact the integrand is odd in $v_\theta$. 
So $g$ is also continuous in all of $\overline \DD$.  
Hence, $B^0 = \frac 1r \D_r (r\psi^0) \in C^1(\overline \DD)$, as can be seen from \eqref{int-equilibria}. 
\bigskip

As for (ii), the construction of some equilibria, for simplicity we only consider the case 
 $ \mu^+(e,p) = \mu^-(e,p) = \mu(e,p)$, 
 which we take to be an arbitrary function subject to the conditions in \eqref{mu-cond}.  
Of course, $\vE^0=0$, $\vB^0=0$ is automatically an equilibrium for any $\mu$.  
However, let us consider the inhomogeneous case.  
Clearly, $h(r,0,0) = g(r,0,0) =0$.   No boundary condition is required on $\varphi^0, \psi^0$.  
It is easy to choose $\mu$ so that the functions $h(r,\cdot,\cdot) $ and $g(r,\cdot,\cdot)$ are uniformly bounded, 
and so that for all $\xi_1,\eta_1,\xi_2,\eta_2$ they satisfy 
\begin{equation}\label{assumption-gh} 
| h(r,\xi_1,\eta_1) - h(r,\xi_2,\eta_2)| + |g(r,\xi_1,\eta_1) - g(r,\xi_2,\eta_2)| \le \theta\ (|\xi_1 - \xi_2| + |\eta_1 - \eta_2|) 
\end{equation}
for some $\theta <1$. Such an assumption is satisfied for instance if $\mu$ is uniformly Lipschitz in its variables and $\mu$ replaced by $\epsilon \mu$ for sufficiently small $\epsilon$.  

Now we denote by $\mathcal{T}(\varphi^0,\psi^0)$ the right sides of the integral equations in \eqref{int-equilibria} with $\gamma=\delta =0$.   
It is clear from assumption \eqref{assumption-gh} that $\mathcal{T}$ is well-defined from 
$C([0,1]) \times C([0,1])$ into itself.  In addition, $\mathcal{T}$ is a contraction map 
on a small ball $\mathcal B$ in this space if $\alpha$ and $\beta$ are sufficiently small.  
So for each small $\alpha$ and $\beta$ there exists a unique solution  $(\varphi^0,\psi^0)$  in $\mathcal B$
to \eqref{int-equilibria} and thus to \eqref{equilibria-eqs}.

\end{document}